\newtheorem{thm}{Theorem}
\newtheorem{cor}{Corollary}
\newtheorem{lemma}{Lemma}
\newtheorem{prop}{Proposition}
\newtheorem{defn}{Definition}
\newtheorem{remark}{Remark}
\newtheorem{ex}{Example}
\newtheorem{nt}{Notation}
\begin{document}

\title[The Kauffman bracket skein module of the lens spaces via unoriented braids]
  {The Kauffman bracket skein module of the lens spaces via unoriented braids}
	
\author{Ioannis Diamantis}
\address{Department of Data Analytics and Digitalisation,
Maastricht University, School of Business and Economics,
P.O.Box 616, 6200 MD, Maastricht,
The Netherlands.
}
\email{i.diamantis@maastrichtuniversity.nl}

\keywords{ skein module, Kauffman bracket, unoriented braids, solid torus, lens spaces, mixed links, mixed braids, braid group of type B, generalized Hecke algebra of type B, generalized Temperley-Lieb algebra of type B}

\setcounter{section}{-1}

\date{}

\begin{abstract}
In this paper we develop a braid theoretic approach for computing the Kauffman bracket skein module of the lens spaces $L(p,q)$, KBSM($L(p,q)$), for $q\neq 0$. For doing this, we introduce a new concept, that of an {\it unoriented braid}. Unoriented braids are obtained from standard braids by ignoring the natural top-to-bottom orientation of the strands. We first define the {\it generalized Temperley-Lieb algebra of type B}, $TL_{1, n}$, which is related to the knot theory of the solid torus ST, and we obtain the universal Kauffman bracket type invariant, $V$, for knots and links in ST, via a unique Markov trace constructed on $TL_{1, n}$. The universal invariant $V$ is equivalent to the KBSM(ST).  For passing now to the KBSM($L(p,q)$), we impose on $V$ relations coming from the band moves (or slide moves), that is, moves that reflect isotopy in $L(p,q)$ but not in ST, and which reflect the surgery description of $L(p,q)$, obtaining thus, an infinite system of equations. By construction, solving this infinite system of equations is equivalent to computing KBSM($L(p,q)$). We first present the solution for the case $q=1$, which corresponds to obtaining a new basis, $\mathcal{B}_{p}$, for KBSM($L(p,1)$) with $(\lfloor p/2 \rfloor +1)$ elements. We note that the basis $\mathcal{B}_{p}$ is different from the one obtained by Hoste \& Przytycki. For dealing with the complexity of the infinite system for the case $q>1$, we first show how the new basis $\mathcal{B}_{p}$ of KBSM($L(p,1)$) can be obtained using a diagrammatic approach based on unoriented braids, and we finally extend our result to the case $q>1$. The advantage of the braid theoretic approach that we propose for computing skein modules of c.c.o. 3-manifolds, is that the use of braids provides more control on the isotopies of knots and links in the manifolds, and much of the diagrammatic complexity is absorbed into the proofs of the algebraic statements.

\smallbreak
\bigbreak

\noindent 2020 {\it Mathematics Subject Classification.} 57K31, 57K14, 20F36, 20F38, 57K10, 57K12, 57K45, 57K35, 57K99, 20C08.

\end{abstract}

\maketitle
	
\setcounter{tocdepth}{1}
\tableofcontents

\section{Introduction}\label{intro}

Skein modules were introduced independently by Przytycki \cite{P} and Turaev \cite{Tu} as generalizations of knot polynomials in $S^3$ to knot polynomials in arbitrary 3-manifolds. They are quotients of free modules over isotopy classes of links in 3-manifolds by properly chosen local (skein) relations. A skein module of a 3-manifold yields all possible isotopy invariants of knot which satisfy a particular skein relation. Skein modules based on the Kauffman bracket skein relation 
\[
L_+-AL_{0}-A^{-1}L_{\infty}
\] 
\noindent where $L_{\infty}$ and $L_{0}$ are represented schematically by the illustrations in Figure~\ref{skein}, are called {\it Kauffman bracket skein modules} (KBSM). 

\smallbreak

For example, the Kauffman bracket skein module of $S^3$, KBSM($S^3$), is freely generated by the unknot and it is equivalent to the Kauffman bracket for framed links in $S^3$ up to regular isotopy. It's ambient isotopy counterpart is the well-known Jones polynomial. Recall that the algebraic counterpart construction of the Kauffman bracket is the Temperley-Lieb algebra together with a unique Markov trace constructed by V.F.R. Jones (see \cite{Jo} and references therein). It is worth pointing out the importance of the Temperley-Lieb algebras. Through the pioneering work of V.F.R. Jones (\cite{Jo, Jo1}), these algebras related knot theory to statistical mechanics, topological quantum field theories and the construction of quantum invariants for 3-manifolds (works of Witten, Reshetikhin-Turaev, Lickorish, etc).

\bigbreak

The precise definition of KBSM is as follows:

\begin{defn}\rm
Let $M$ be an oriented $3$-manifold and $\mathcal{L}_{{\rm fr}}$ be the set of isotopy classes of unoriented framed links in $M$. Let $R=\mathbb{Z}[A^{\pm1}]$ be the Laurent polynomials in $A$ and let $R\mathcal{L}_{{\rm fr}}$ be the free $R$-module generated by $\mathcal{L}_{{\rm fr}}$. Let $\mathcal{S}$ be the ideal generated by the skein expressions $L_+-AL_{0}-A^{-1}L_{\infty}$ and $L \bigsqcup {\rm O} - (-A^2-A^{-2})L$. Note that blackboard framing is assumed and that $L \bigsqcup {\rm O}$ stands for the union of a link $L$ and the trivially framed unknot in a ball disjoint from $L$. 

\begin{figure}[H]
\begin{center}
\includegraphics[width=2.1in]{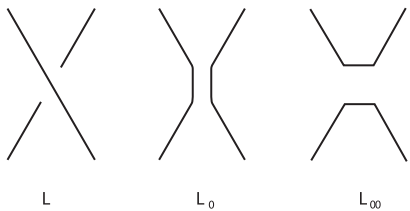}
\end{center}
\caption{The links $L$, $L_{0}$ and $L_{\infty}$ locally.}
\label{skein}
\end{figure}

\noindent Then the {\it Kauffman bracket skein module} of $M$, KBSM$(M)$, is defined to be:

\begin{equation*}
{\rm KBSM} \left(M\right)={\raise0.7ex\hbox{$
R\mathcal{L}_{{\rm fr}} $}\!\mathord{\left/ {\vphantom {R\mathcal{L_{{\rm fr}}} {\mathcal{S} }}} \right. \kern-\nulldelimiterspace}\!\lower0.7ex\hbox{$ S  $}}.
\end{equation*}

\end{defn}

Skein modules of $3$-manifolds have become very important algebraic tools in the study of $3$-manifolds, since their properties renders topological information about the $3$-manifolds. In this paper we are interested in the Kauffman bracket skein module of the lens spaces $L(p,q)$, for $p\neq 0$ using braid theoretical tools. We consider the lens spaces $L(p,q)$ obtained from $S^3$ by rational surgery along the unknot with coefficient $p/q$. Surgery along the unknot is realized by considering the complementary solid torus and attaching to it a solid torus according to a $(p,q)$-homeomorphism on the boundary. Hence, the Kauffman bracket skein module of the solid torus, KBSM(ST), is our starting point. KBSM(ST) is essential in the study of Kauffman bracket skein modules of arbitrary c.c.o. $3$-manifolds, since every c.c.o. $3$-manifold can be obtained by surgery along a framed link in $S^3$ with unknotted components. The family of the lens spaces, $L(p,q)$, comprises the simplest example, since, as mentioned before, they are obtained by rational surgery on the unknot. For a survey on skein modules see \cite{P}.

\bigbreak

From the above it is clear that the knot theory of ST is of fundamental importance for studying knot theory in other c.c.o. 3-manifolds. A basis for KBSM(ST) is presented in \cite{Tu} using diagrammatic methods (see also \cite{HK}). More precisely:

\begin{thm}[\cite{Tu}]
The Kauffman bracket skein module of ST, KBSM(ST), is freely generated by an infinite set of generators $\left\{x^n\right\}_{n=0}^{\infty}$, where $x^n$ denotes $n$ parallel copies of a longitude of ST and $x^0$ is the affine unknot (see Figure~\ref{tur}).
\end{thm}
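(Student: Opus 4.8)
The statement splits into a spanning claim and a linear-independence claim, both of which I would approach diagrammatically.

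\textbf{Spanning.} Realise ST as $\mathcal{A}\times[0,1]$, where $\mathcal{A}=S^{1}\times[0,1]$ is the annulus, so that a framed link in ST is presented by a blackboard-framed link diagram $D$ in $\mathcal{A}$. Apply the Kauffman bracket skein relation $L_{+}=AL_{0}+A^{-1}L_{\infty}$ at every crossing of $D$; this rewrites $[D]$ in $\mathrm{KBSM}(\mathrm{ST})$ as an $R$-linear combination of crossingless diagrams (self-crossings, i.e.\ curls, contributing only the usual scalars $-A^{\pm3}$). A crossingless diagram in $\mathcal{A}$ is a disjoint union of embedded circles, each of which is either null-homotopic or isotopic in $\mathcal{A}$ — hence in ST — to the core $S^{1}\times\{1/2\}$; moreover any two disjoint essential circles in $\mathcal{A}$ are parallel. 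Using the relation $L\sqcup\mathrm{O}=(-A^{2}-A^{-2})L$, delete every null-homotopic component at the price of a scalar in $R$; what is left is, up to isotopy, $n\geq0$ parallel copies of the core with the annulus framing, that is, exactly $x^{n}$. Hence $\{x^{n}\}_{n\geq0}$ generates $\mathrm{KBSM}(\mathrm{ST})$ over $R$.

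\textbf{Independence.} I would construct an $R$-linear map $\Phi\colon\mathrm{KBSM}(\mathrm{ST})\to R[z]$ with $\Phi(x^{n})=z^{n}$ for $n\geq1$ and $\Phi(x^{0})=\delta:=-A^{2}-A^{-2}$ (the affine unknot being contractible). For a diagram $D$ in $\mathcal{A}$ put
\[
\Phi(D)=\sum_{s}A^{\sigma(s)}\,\delta^{\,d(s)}\,z^{\,e(s)},
\]
where $s$ runs over the states of $D$ (a choice of $A$- or $B$-smoothing at each crossing), $\sigma(s)$ is the number of $A$-smoothings minus the number of $B$-smoothings, and $d(s)$, $e(s)$ are respectively the numbers of null-homotopic and of essential loops of the crossingless diagram obtained from $s$. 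By construction $\Phi$ obeys the two Kauffman relations, so the only thing to prove is that $\Phi$ is invariant under the Reidemeister moves II and III (and under isotopy of $\mathcal{A}$); it then descends to a well-defined invariant of framed links in ST, annihilates the skein ideal $\mathcal{S}$, and so factors $R$-linearly through $\mathrm{KBSM}(\mathrm{ST})$ — a posteriori, $\Phi$ is essentially the universal Kauffman bracket invariant of ST. Since the elements $\delta,z,z^{2},z^{3},\dots$ of $R[z]$ are $R$-linearly independent, so are $x^{0},x^{1},x^{2},\dots$ in $\mathrm{KBSM}(\mathrm{ST})$; together with the spanning claim this shows $\{x^{n}\}_{n\geq0}$ is a free $R$-basis.

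\textbf{Main obstacle.} The one non-formal point, where I expect the real work, is the invariance of $\Phi$ under R2 and R3 — concretely, that the partition of the loops of each state into essential and null-homotopic transforms correctly under these moves, so that the familiar local skein identities of the planar Kauffman bracket can be applied with $\delta$ bookkeeping only the inessential loops and $z$ left untouched. This is plausible by locality — an R2 or R3 move is supported in a disk embedded in $\mathcal{A}$, loops created or destroyed inside it bound in that disk and are therefore inessential in $\mathcal{A}$, while loops leaving the disk keep their homotopy class — but it does require checking. An alternative route, at the cost of a heavier input, is to invoke the general structural fact that for a surface $F$ the module $\mathrm{KBSM}(F\times I)$ is free over $R$ on the isotopy classes of multicurves in $F$ with no contractible component; for $F=\mathcal{A}$ these are precisely the systems of $n$ parallel core circles, i.e.\ the $x^{n}$.
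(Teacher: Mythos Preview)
Your proposal is correct and is precisely the classical diagrammatic argument. Note, however, that the paper does \emph{not} give its own proof of this statement: the theorem is quoted from Turaev \cite{Tu} (see also \cite{HK}) and explicitly attributed to ``diagrammatic methods'' there, so there is no in-paper proof to compare against. What you have written is essentially that classical proof --- resolve crossings, classify simple closed curves in the annulus, and separate essential from inessential loops via a state-sum bracket taking values in $R[z]$. The R2/R3 invariance you flag as the ``main obstacle'' is indeed the only point requiring care, and your locality argument (the move sits in a disk, so any loop it creates or destroys is inessential) is exactly how it is dispatched; this is standard and goes through without surprises.

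The paper's own contribution in this direction is different: it \emph{assumes} the Turaev basis $B'_{\rm ST}$ and then proves that another set $B_{\rm ST}=\{t^n:n\in\mathbb{N}\}$ is also a basis (Theorem~\ref{mr}) by exhibiting a lower-triangular change-of-basis matrix with invertible diagonal. So the paper's argument is a change of basis on top of the result you are proving, not an independent proof of it. Your approach is self-contained; the paper's is not, by design.

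One small conventional wrinkle: you set $\Phi(x^0)=\delta$, treating $x^0$ as an actual contractible circle. Depending on how one reads ``affine unknot'' (empty link versus a bounding unknot), one might instead want $\Phi(x^0)=1$. Either reading gives an $R$-linearly independent family in $R[z]$, so the argument is unaffected, but it is worth being explicit about which convention you adopt.
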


\begin{figure}[H]
\begin{center}
\includegraphics[width=5in]{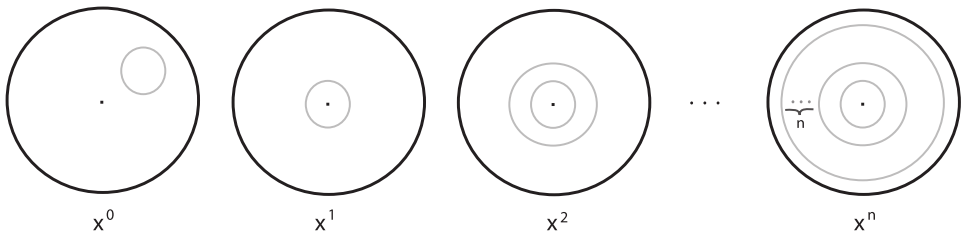}
\end{center}
\caption{The Turaev basis of KBSM(ST).}
\label{tur}
\end{figure}

In this paper we develop a braid theoretic approach to the Kauffman bracket skein module of $L(p,q)$, for $q\neq 0$. Note that the KBSM($L(p,q)$) has been computed via a diagrammatic approach in \cite{HP}, where it is shown that: 

\begin{thm}[\cite{HP}, Theorem 4]
The Kauffman bracket skein module of the lens spaces $L(p, q)$ for $p\geq 1$ is freely generated by $\{x^i\}_{i=0}^{\lfloor p/2 \rfloor}$, where $\lfloor p/2 \rfloor$ denotes the integer part of $p/2$.
\end{thm}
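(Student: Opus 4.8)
The plan is to move the entire computation to the algebraic side. Realize links in $L(p,q)$ as closures of mixed braids lying in the solid torus ${\rm ST}$: since $L(p,q)$ is obtained by rational $p/q$-surgery on the unknot, ${\rm ST}$ is the complement of the surgery solid torus, and two links in ${\rm ST}$ are isotopic in $L(p,q)$ exactly when they differ by a finite sequence of isotopies of ${\rm ST}$ together with \emph{band moves} across the meridian disk of the glued-in solid torus, whose boundary runs $q$ times meridionally and $p$ times longitudinally on $\partial{\rm ST}$ (for $q=1$ this is just a handle slide over the framed unknot with framing $p$). Since the universal Kauffman bracket type invariant $V$ built from the Markov trace on $TL_{1,n}$ is equivalent to ${\rm KBSM}({\rm ST})$, which by Turaev's theorem is $R$-free on $\{x^{n}\}_{n\ge 0}$, we obtain
\[
{\rm KBSM}\bigl(L(p,q)\bigr)\;=\;{\rm KBSM}({\rm ST})\,\big/\,\bigl\langle\,\beta-{\rm bm}(\beta)\,\bigr\rangle ,
\]
the submodule being generated over all links $\beta$ in ${\rm ST}$ and all band moves ${\rm bm}$. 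So it suffices to compute, for each generator $x^{n}$, the expansion in the Turaev basis of the link obtained from $x^{n}$ by a band move, and then to solve the resulting infinite linear system $\{E_{n}\}_{n\ge 0}$.

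The first step is the band-move computation. For each $n$, I would compute $V$ of the link obtained from $x^{n}$ by a band move; feeding the corresponding braid word (an element of $TL_{1,N}$ for a suitable $N$) through the defining relations of the algebra and then the Markov trace rewrites it as an explicit $R$-linear combination of $x^{0},\dots,x^{n+p}$, the top index being forced by the $p$ longitudinal wraps of the band-move curve. For $q=1$ the equations $E_{n}$ organize themselves well: up to an invertible coefficient, the highest term of $E_{n}$ is $x^{n+p}-x^{n}$ modulo strictly lower terms — reflecting that in $L(p,1)$ an $(n+p)$-fold longitude can be unwound onto an $n$-fold one — while a second family of relations, obtained by sliding the band across the core the other way, contributes the finite symmetry $x^{n}\sim x^{p-n}$, $0\le n\le p$.

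The second step is to solve the system for $q=1$. The relations with leading term $x^{n+p}-x^{n}$ reduce every $x^{m}$ with $m\ge p$ to an $R$-combination of $x^{0},\dots,x^{p-1}$; the symmetry relations then eliminate $x^{\lceil p/2\rceil},\dots,x^{p-1}$ in favour of $x^{0},\dots,x^{\lfloor p/2\rfloor}$, so $\{x^{i}\}_{i=0}^{\lfloor p/2\rfloor}$ spans ${\rm KBSM}(L(p,1))$ (the braid solution in fact delivers a natural basis $\mathcal{B}_{p}$ of the same cardinality $\lfloor p/2\rfloor+1$, packaged differently from the Hoste--Przytycki one). The essential remaining point is to prove that no further relation survives, i.e. that these $\lfloor p/2\rfloor+1$ elements are $R$-linearly independent in ${\rm KBSM}(L(p,1))$; I would establish this by producing enough $R$-linear functionals on ${\rm KBSM}({\rm ST})$ that annihilate every $E_{n}$ yet are non-degenerate on the candidate basis, or by a direct analysis showing the solved system has no hidden consequence. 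Finally I would recast this solution diagrammatically using \emph{unoriented braids} — tracking the action of a band move at the level of unoriented braid diagrams rather than through the trace — and then push it to arbitrary $q$: there the band-move curve is $p\lambda+q\mu$ and the intermediate braid words carry $q$ meridional wraps, but these can be absorbed with the relations of $TL_{1,n}$ without disturbing the longitudinal bookkeeping, so the same reduction and symmetry relations go through and give the same rank $\lfloor p/2\rfloor+1$, independent of $q$.

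The main obstacle is the solution of the infinite system, and within it two specific difficulties. The first is controlling the subleading terms of the band-move expansion: a priori $E_{n}$ couples many generators simultaneously, and one must show that the system is, after the fact, triangular plus a finite set of symmetry relations — equivalently, that the reduction terminates precisely at index $\lfloor p/2\rfloor$ and does not collapse the candidate basis any further; this is exactly the freeness statement. The second is the passage to $q>1$: the curve $p\lambda+q\mu$ forces normal-form manipulations of words in $TL_{1,n}$ that mix the looping generator with long products of the $U_{i}$'s, and one must verify that the extra meridional data imposes no new independent relation, so that the rank stays $\lfloor p/2\rfloor+1$ regardless of $q$. The unoriented-braid formalism is introduced precisely to tame this bookkeeping, converting the topological complexity of the band moves into intricate but finite and checkable algebraic identities in $TL_{1,n}$.
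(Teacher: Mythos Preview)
First a clarification: the quoted theorem is not proved in this paper --- it is cited from Hoste--Przytycki. What the paper establishes independently is the variant with the braided basis $\mathcal{B}_p=\{t^i\}_{i=0}^{\lfloor p/2\rfloor}$ (its Theorem~1), and then the extension to $q>1$ in the final section. Your proposal should therefore be compared against \emph{that} argument.

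With that understood, your plan is essentially the paper's strategy. The paper also passes from ${\rm KBSM(ST)}$ to ${\rm KBSM}(L(p,q))$ by quotienting by band moves, works in the new basis $B_{\rm ST}=\{t^n\}_{n\ge 0}$ rather than directly in the Turaev basis (this is the one structural step you underplay: the change of basis is what makes the band-move equations tractable), computes the trace of $bbm(t^n)=t^p t_1^n\sigma_1^{\pm1}$, and shows the resulting system reduces every $s_m$ with $m\ge p$ to combinations of $s_0,\dots,s_{p-1}$, and then further to $s_0,\dots,s_{\lfloor p/2\rfloor}$. The diagrammatic extension to $q>1$ via unoriented braids is exactly what the paper does in its last section.

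The one genuine gap in your proposal is the freeness argument. You leave linear independence as ``produce enough $R$-linear functionals \dots\ or a direct analysis showing the solved system has no hidden consequence,'' and you flag it yourself as the main obstacle. The paper's solution is concrete and short, and it is the idea you are missing: switch to the $\beta$-type band move. Applying a $\beta$-band move to $t^{p/2}$ (for $p$ even) and then isotoping in ${\rm ST}$ returns $t^{p-p/2}=t^{p/2}$; i.e.\ the band move fixes $t^{p/2}$. Since ${\rm KBSM}(L(p,1))={\rm KBSM(ST)}/\langle\text{band moves}\rangle$ and $t^{p/2}$ is already a basis element of ${\rm KBSM(ST)}$, this shows $t^{p/2}$ cannot be written in terms of lower powers, and hence $\mathcal{B}_p$ is linearly independent. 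The paper explicitly remarks that carrying out independence via the $\alpha$-type braid band moves (your ``direct analysis'') is possible in principle but requires tracking all coefficients and checking a nontrivial identity $a_i=b_i$ for all $i$, which is exactly the difficulty you anticipated; the $\beta$-move sidesteps it entirely. Without this trick your proposal is only a spanning argument.
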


We propose a new and more efficient method for computing Kauffman bracket skein modules via the notion of {\it unoriented braids}, which we define along the way. The algebraic techniques developed in \cite{LR1, LR2, La1, La2, DL1, DL2, DLP, DL4, D1, D2, D3, D4} are of great importance. More precisely, in \cite{La2} it is shown that the generalized Hecke algebra of type B, ${H}_{1,n}$, is related to the knot theory of the solid torus and the Artin group of the Coxeter group of type B, $B_{1, n}$. Moreover, the most generic analogue of the HOMFLYPT polynomial, $X$, for links in the solid torus ST has been derived from ${H}_{1,n}$ via a unique Markov trace constructed on it. The invariant $X$ recovers the HOMFLYPT skein module of ST (see \cite{La2, DL2, DL3}) and as shown in \cite{DLP, DL4}, it can be extended to an invariant for knots in $L(p,1)$ by considering the effect of the {\it braid band moves}, i.e. the analogue of band moves on the braid level (see \cite{LR1, LR2, DL1}) on elements in some basis of the HOMFLYPT skein module of ST. As shown in \cite{P}, this is also true for the case of the Kauffman bracket skein module of $L(p,q),\, p\geq 1$. That is, in order to compute KBSM($L(p,q)$), we consider elements in some basis of KBSM(ST) and impose on the universal invariant for knots in ST, relations coming from the band moves. That is:

\begin{equation}
{\rm KBSM}\left(L(p, q)\right)=\frac{{\rm KBSM}({\rm ST})}{<a-bbm(a)>}, \quad {\rm where}\ a\ {\rm basis\ element\ of\ KBSM(ST)}.
\end{equation}

In this paper we give the complete analogue of KBSM($L(p,q)$) via braids. We first define the {\rm generalized Temperley-Lieb algebra} of type B, $TL_{1, n}$ as the quotient of ${H}_{1,n}$ over the ideal generated by expressions of the form: 

\begin{equation}\label{ideal}
\begin{array}{lcl}
\sigma_{i, i+1} & := & 1 + u\ (\sigma_i+ \sigma_{i+1}) + u^2\ (\sigma_i\sigma_{i+1}+\sigma_{i+1}\sigma_i)+u^3\ \sigma_i\sigma_{i+1}\sigma_i.\\
\end{array}
\end{equation}   

\noindent That is, $TL_{1, n}\, :=\, \frac{H_{1, n}}{<\sigma_{i, i+1}>}$. We then show that the trace function constructed on $H_{1, n}$ (\cite{La2}) factors through $TL_{1, n}$ and, by normalizing it, we obtain the universal Kauffman bracket, $V$, for knots and links in ST. We then extend this invariant to knots and links in $L(p,q)$ for $p\geq 1$, by solving the infinite system of equations resulting from the band moves. Namely, we force:

\begin{equation}\label{eqbbm}
V_{\widehat{a}}\ =\  V_{\widehat{bm(a)}},
\end{equation}

\noindent for all $a$ in the basis of KBSM(ST), where $bm(a)$ denotes the result of a band move on $a$.

\smallbreak

Equations~(\ref{eqbbm}) have particularly simple formulations with the use of a new basis, $B_{{\rm ST}}$, for the Kauffman bracket skein module of ST, that is presented first in Section~2 (see Eq.~(\ref{basis}) in this paper), in terms of mixed braids (that is, classical braids with the first strand identically fixed). We prove that $B_{{\rm ST}}$ is a basic set of KBSM(ST) by relating the braided form of the Turaev basis, ${B}_{{\rm ST}}^{\prime}$, to ${B}_{{\rm ST}}$ via a lower triangular matrix with invertible elements in the diagonal. For an illustration of elements in the basis ${B}_{{\rm ST}}$ see the bottom of Figure~\ref{allbases}. Note that the same basis is presented in \cite{D1} via the Tempreley-Lieb algebra of type B and in \cite{GM1} via a diagrammatic approach based on arrow diagrams. We then solve the infinite system of Eq.~(\ref{eqbbm}) for the case of $L(p,1)$ and we arrive at the main result of this paper. In particular, we have the following result:

\begin{thm}\label{important}
The Kauffman bracket skein module of the lens spaces $L(p,1)$, $p\geq 1$, is freely generated by elements in the set $\mathcal{B}_p$, consisting of element in the form $\{t^i\}_{i=0}^{\lfloor p/2 \rfloor}$. For an illustration of elements in $\mathcal{B}_p$ see Figure~\ref{allbases}.
\end{thm}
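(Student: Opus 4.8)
The plan is to set up Equations~(\ref{eqbbm}) explicitly in the basis $B_{\rm ST}$ and to show that, after imposing all band-move relations for $L(p,1)$, the quotient collapses to the free module on $\{t^i\}_{i=0}^{\lfloor p/2\rfloor}$. The starting point is the universal Kauffman bracket $V$ for ST, whose values on the basis $B_{\rm ST}=\{t^k\}_{k\ge 0}$ (where $t$ denotes the ``winding'' generator coming from the fixed first strand) are linearly independent over $R$, since $V$ recovers KBSM(ST). First I would compute, for each basis element $a=t^k$, the effect of a single band move $bm(a)$: this replaces $t^k$ by a mixed link whose extra band runs parallel to the surgery circle, so on the braid level one inserts $\sigma_1^{\pm 1}$ type factors before closing up. Resolving the resulting tangle via the Kauffman bracket skein relation \eqref{ideal}, i.e. using that in $TL_{1,n}$ one has $\sigma_i = -u^{-1} - u\, e_i$ style identities (the Temperley--Lieb substitution), $V_{\widehat{bm(t^k)}}$ becomes an explicit $R$-linear combination $\sum_j c_{kj}(A)\, t^j$ of basis elements, with the top term $t^{k+1}$ (or $t^{k-1}$, depending on orientation of the band) appearing with an invertible coefficient. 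This is exactly the mechanism that lets one reduce high powers of $t$.

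The second step is to organize the infinite system $\{t^k - \sum_j c_{kj}t^j\}_{k\ge 0}$ and to extract its solution. Because the band move in $L(p,1)$ wraps the band $p$ times around the surgery torus, the leading relation should express $t^{k+p}$ (again up to an invertible unit and lower-order corrections, and after using the disjoint-unknot relation $O = -A^2-A^{-2}$ to absorb closed components) in terms of $t^{k},t^{k\pm1},\dots$. Iterating, every $t^m$ with $m$ large reduces modulo the ideal $\langle a - bm(a)\rangle$ to an $R$-linear combination of $t^0,t^1,\dots,t^{p-1}$; that already gives a spanning set of size $p$. The further reduction from $p$ to $\lfloor p/2\rfloor+1$ generators comes from the \emph{unoriented} nature of the braids: reversing the orientation of the surgery strand identifies $t^i$ with $t^{p-i}$ in the quotient (this is the extra symmetry that the standard HOMFLYPT/oriented computation does not see), so $\{t^i\}_{i=0}^{\lfloor p/2\rfloor}$ spans. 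I would prove the identification $t^i \equiv t^{p-i}$ by exhibiting an explicit isotopy in $L(p,1)$ realized by a sequence of band moves and braid equivalences, reading it off from the surgery description.

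For linear independence of $\{t^i\}_{i=0}^{\lfloor p/2\rfloor}$ in the quotient, I would argue that no further relations can occur: any relation in KBSM($L(p,1)$) must, by \eqref{basis} and the construction of $V$, be a consequence of finitely many instances of \eqref{eqbbm}, and a rank computation (or comparison with the known Hoste--Przytycki rank $\lfloor p/2\rfloor+1$ from Theorem~\cite{HP}) shows the spanning set is in fact a basis; alternatively one checks that the matrix of the truncated system, in the triangular form coming from the leading-term analysis of Step~1, has full rank $\aleph_0 - (\lfloor p/2\rfloor+1)$ over $R$ after accounting for the $i\leftrightarrow p-i$ symmetry. The main obstacle I expect is \textbf{Step~2}: controlling the lower-order ``tail'' $\sum_{j<k+p} c_{kj}t^j$ produced by each band move and proving that the infinite cascade of substitutions terminates cleanly without introducing denominators outside $R=\mathbb{Z}[A^{\pm1}]$ — in other words, showing the leading coefficients $c_{k,k+p}(A)$ are genuinely units in $R$ (powers of $-A^{\pm 1}$, not mere nonzero polynomials) so that the reduction is valid over the Laurent ring and not only over its field of fractions. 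Handling the $p$ even versus $p$ odd cases separately (the middle generator $t^{p/2}$ being fixed by the symmetry when $p$ is even) is a routine but necessary bookkeeping point.
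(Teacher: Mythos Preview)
Your overall strategy---reduce via the band-move relations to a finite generating set, then invoke the unoriented symmetry $t^i\leftrightarrow t^{p-i}$ to cut down to $\lfloor p/2\rfloor+1$ generators---is essentially the paper's approach, and your identification of the main technical obstacle (invertibility of leading coefficients over $R$) is on target. Two points need correction or strengthening.

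First, a minor inconsistency: in Step~1 you say the top term after a band move on $t^k$ is $t^{k\pm 1}$, but in Step~2 you (correctly) say the leading relation expresses $t^{k+p}$. The latter is right: the braid band move sends $t^k\mapsto t^p t_1^k\sigma_1^{\pm 1}$, whose expansion in $B_{\rm ST}$ has top term $t^{p+k}$, not $t^{k+1}$. The reduction step is ``shift by $p$'', not ``shift by $1$''.

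Second, and more seriously, your linear-independence argument has a genuine gap. Appealing to the Hoste--Przytycki rank is circular, since the stated aim of the paper is an independent derivation; and the alternative ``rank computation on the truncated triangular system'' is not something you can read off from the spanning analysis alone, because the relations you have written down so far only go one way (they express high powers in terms of low ones) and give no obstruction to further collapse among $t^0,\dots,t^{\lfloor p/2\rfloor}$. The paper closes this gap by a different device: it switches from the $\alpha$-type band move (used throughout the spanning argument) to the $\beta$-type band move and checks directly that the $\beta$-move applied to $t^{p/2}$ returns $t^{p/2}$ after isotopy in ST. Since KBSM$(L(p,1))$ is KBSM(ST) modulo band moves, and $t^{p/2}$ is already a basis element of KBSM(ST), this shows the band-move ideal imposes no relation on $t^{p/2}$, hence it survives as an independent generator. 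You correctly anticipated that $p$ even needs separate treatment (the fixed point of $i\mapsto p-i$), but you did not supply the mechanism; the $\beta$-move computation is that mechanism, and without it your independence step does not go through.
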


Note that the basis $\mathcal{B}_p$ is different from the basis presented by Hoste and Przytycki in \cite{HP} for KBSM($L(p,1)$). After we establish the algebraic method for the case $q=1$, we present a diagrammatic method based on braids and we use it to derive the basis $\mathcal{B}_p$ for KBSM($L(p,1)$). We then extend this diagrammatic method for the case of $L(p,q)$, $p, q\geq 1$, and in particular, we show that the set $\mathcal{B}_p$ forms a basis for KBSM($L(p,q)$). We finally discuss the algebraic method for the case of KBSM($L(p,q)$).

\bigbreak

The paper is organized as follows: In \S\ref{basics} we recall the setting and the essential techniques and results from \cite{La1, La2, LR1, LR2, DL1}. More precisely, we present isotopy moves for knots and links in $L(p,q)$ and we then describe braid equivalence for knots and links in $L(p,q)$. In \S\ref{SolidTorus} we present results from \cite{La2} and \cite{FG} and we extend these results in order to construct the universal invariant for knots in ST of the Kauffman bracket type. More precisely, we start by recalling results on the generalized Hecke algebra of type B, $H_{1, n}$, and we present the universal invariant of the HOMFLYPT type for knots in ST via a unique Markov trace, $tr$, defined on the algebra. This invariant captures the HOMFLYPT skein module of ST. We then pass to the generalized Temperley-Lieb algebra of type B, defined as a quotient algebra from $H_{1, n}$ over the ideal generated by relations (\ref{ideal}), and we find necessary and sufficient conditions so that the trace $tr$ factors through $TL_{1, n}$. Using the trace, we define the universal invariant $V$ for knots in ST of the Kauffman bracket type. In \S~\ref{SMSST} we present results on KBSM(ST) via braids and we present a new basis, $B_{\rm ST}$, for KBSM(ST) via an ordering relation defined in \cite{DL2}. This basis is also presented in \cite{D1} using the Temperley-Lieb algebra of type B, which, roughly speaking, is related to an invariant for knots in ST, but not the most generic one that captures KBSM(ST). Moreover, the results presented for obtaining $B_{\rm ST}$ are essential for the computation of the Kauffman bracket skein module of the lens spaces $L(p,1)$, $p\geq 1$, that we compute in \S~\ref{kblens} using algebraic techniques. This is done with the use of the newly introduced concept of unoriented braids defined in \S~\ref{unbr1}. These new objects seem promising for studying knots in c.c.o. 3-manifolds (and c.c.o. 3-manifolds in general) via the proposed technique. Finally, in \S~\ref{diag} we present the basis $\mathcal{B}_p$ for $L(p,1)$ using a diagrammatic approach based on unoriented braids and in \S~\ref{lpq} we show how these results can be generalized for the case $L(p,q), q>1$.

\bigbreak

It is worth mentioning that the importance of our approach lies in the fact that it can shed light to the problem of computing (various) skein modules of arbitrary c.c.o. $3$-manifolds (see \cite{D4} for the case of the Kauffman bracket skein module of the complement of $(2, 2p+1)$-torus knots). The main difficulty of the problem lies in solving the infinite system of equations (\ref{eqbbm}).

\bigbreak

\noindent \textbf{Acknowledgments}\ \ I would like to acknowledge several discussions with Professor Sofia Lambropoulou, who provided insight and expertise that greatly assisted this research.

\section{Preliminaries}\label{basics}

\subsection{Mixed links and isotopy in $L(p,q)$}

In this section we recall results from \cite{LR1, LR2, DL1}. We shall consider ST to be the complement of a solid torus in $S^3$. Then, an oriented link $L$ in ST can be represented by an oriented \textit{mixed link} in $S^{3}$, that is, a link in $S^{3}$ consisting of the unknotted fixed part $\widehat{I}$ representing the complementary solid torus in $S^3$, and the moving part $L$ that links with $\widehat{I}$. A \textit{mixed link diagram} is a diagram $\widehat{I}\cup \widetilde{L}$ of $\widehat{I}\cup L$ on the plane of $\widehat{I}$, where this plane is equipped with the top-to-bottom direction of $I$ (for an illustration see Figure~\ref{mli}).

\begin{figure}[H]
\begin{center}
\includegraphics[width=3.1in]{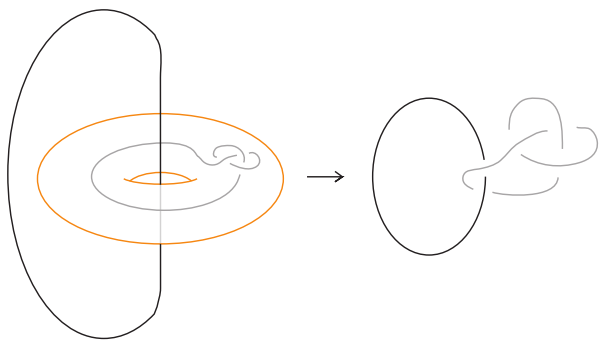}
\end{center}
\caption{ A mixed link. }
\label{mli}
\end{figure}

\smallbreak

The lens spaces $L(p,q)$ can be obtained from $S^3$ by surgery on the unknot with surgery coefficient $p\in \mathbb{Z}$. Surgery along the unknot can be realized by considering first the complementary solid torus and then attaching to it a solid torus according to some homeomorphism $h$ on the boundary. For $L(p,1)$, the homeomorphism $h$ on the boundaries of the solid tori maps the meridian, $m_1$, of one solid torus to a $(p,1)$-curve on the other, that is, a $(p\cdot l_2+1\cdot m_2)$-curve, where $l_2$ denotes the longitude and $m_2$ denotes the meridian of the second solid torus:
\[
\begin{matrix}
h & : & \partial\, {\rm ST}_1 & \rightarrow & \partial\, {\rm ST}_2\\
  &   &   m_1                  & \mapsto     &  p\cdot l_2+1\cdot m_2
\end{matrix}
\]

Thus, isotopy in $L(p,q)$ can be viewed as isotopy in ST together with the {\it band moves} in $S^3$, which reflect the surgery description of the manifold. In Figure~\ref{bmov}, the two types of band moves for the case of $L(p, 1)$ are illustrated, according to the orientation of the component of the knot and of the surgery curve. In the $\alpha$-type the orientation of the arc is opposite to the orientation of the surgery curve, but after the performance of the move their orientations agree. In the $\beta$-type the orientations initially agree, but disagree after the performance of the move. For the case of $L(p, q)$ see Figure~\ref{bbm1}. In \cite{DL1} it is shown that in order to describe isotopy for knots and links in a c.c.o. $3$-manifold, it suffices to consider only one type of band moves (cf. Thm.~6 \cite{DL1}) and thus, isotopy between oriented links in $L(p,q)$ is reflected in $S^3$ by means of the following theorem:

\begin{thm}
Two oriented links in $L(p,q)$ are isotopic if and only if two corresponding mixed link diagrams of theirs differ by isotopy in {\rm ST} together with a finite sequence of one type of band moves.
\end{thm}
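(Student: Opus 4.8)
The plan is to read off the statement from the surgery presentation $L(p,q)={\rm ST}\cup_h V$, where ${\rm ST}=S^3\setminus N(\widehat{I})$ is the complementary solid torus, $V$ is the solid torus glued along the $(p,q)$-homeomorphism $h$ described above, $\delta\subset V$ is a properly embedded meridian disk of $V$, $K$ is the core circle of $V$, and $c=\partial\delta$ is the surgery curve lying on $\partial\,{\rm ST}=\partial N(\widehat{I})$. The ``if'' direction is the easy one: an isotopy inside ${\rm ST}$ is \emph{a fortiori} an isotopy in $L(p,q)$, and a band move, by construction, pushes an arc of the moving part that runs parallel to $c$ across the disk $\delta$; such a finger move across a properly embedded disk is realized by an ambient isotopy of $L(p,q)$ supported in a bicollar of $\delta$. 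Hence mixed link diagrams related by ${\rm ST}$-isotopy together with band moves do represent isotopic links in $L(p,q)$.

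For the ``only if'' direction I would start from a smooth ambient isotopy $\{\phi_t\}_{t\in[0,1]}$ of $L(p,q)$ with $\phi_0={\rm id}$ carrying a representative of $L_0$ to one of $L_1$, both representatives chosen disjoint from $K$, i.e.\ lying in ${\rm ST}$. Putting the trace $\bigcup_{t}\phi_t(L_0)$ in generic position relative to $K$ by a small perturbation rel endpoints, it meets the $1$-manifold $K$ transversally and hence in finitely many points, occurring at distinct times $0<t_1<\dots<t_k<1$. On each complementary subinterval the moving link avoids $K$, hence lies in $L(p,q)\setminus K={\rm ST}\cup(V\setminus K)$, which deformation retracts onto ${\rm ST}$; composing with this retraction turns each such piece of $\{\phi_t\}$ into an isotopy inside ${\rm ST}$, recorded on mixed link diagrams by the moves of \cite{LR1, LR2}. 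In a small time window around each $t_j$ exactly one strand of the moving part is pushed once across the meridian disk $\delta$, i.e.\ across the surgery curve $c$; transporting this local move to the $S^3$-side mixed diagram is precisely a band move along $\widehat{I}$ of the type pictured in Figure~\ref{bmov}, the $\alpha$- or $\beta$-type being selected by whether the orientations of the strand and of $c$ agree at that instant. Concatenating the finitely many pieces exhibits any two mixed link diagrams of $L_0$ and $L_1$ as related by a finite sequence of ${\rm ST}$-isotopy moves and band moves of both types.

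The remaining step, which I expect to be the main obstacle, is to collapse the two types of band moves into a single one; this is the content of Thm.~6 of \cite{DL1}. Here the plan is to fix a local parametrized model of a band move and check that precomposing the attached band with a planar half-twist (a $\pi$-rotation of the band) interchanges the $\alpha$- and $\beta$-types while introducing only a single curl on the transported strand; since the theorem concerns unframed link isotopy, that curl is then removed by a Reidemeister-$1$ move, which is among the mixed moves of \cite{LR1, LR2}. The technical work is to confirm this identity on the exact local pictures of Figure~\ref{bmov}, and to arrange the substitutions so that no new intersections of the trace with $K$ are created, so that the total number of moves stays finite; granting this, the theorem follows.
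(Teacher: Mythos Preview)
The paper does not actually prove this theorem; it is quoted as background, with the two-types case attributed to \cite{LR1,LR2} and the reduction to a single type to Thm.~6 of \cite{DL1}. Your sketch reconstructs precisely the architecture of those references: the easy ``if'' direction, then general position of the isotopy trace with the core $K$ of the reglued solid torus to produce finitely many band moves separated by ${\rm ST}$-isotopies, and finally the collapse of the two band-move types into one via a local half-twist and Reidemeister~I. So your approach is the expected one and matches what the paper invokes.

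Two small points deserve tightening if you want the argument to stand on its own. First, the passage ``$L(p,q)\setminus K$ deformation retracts onto ${\rm ST}$; composing with this retraction turns each such piece of $\{\phi_t\}$ into an isotopy inside ${\rm ST}$'' is slightly glib: a deformation retraction need not send embeddings to embeddings. What you actually use is that $V\setminus K\cong \partial V\times[0,1)$, so $L(p,q)\setminus K$ is diffeomorphic to the interior of ${\rm ST}$, and the isotopy can be conjugated by this diffeomorphism (and then compressed into ${\rm ST}$ by a collar argument) to an honest ${\rm ST}$-isotopy. Second, in the reduction step you should say explicitly how the local substitution is realized as a sequence of allowed moves: introduce a curl on the approaching arc by an ${\rm ST}$-Reidemeister~I move so that the orientation at the attaching point is flipped, perform the $\alpha$-move, and then observe that the result agrees with that of the $\beta$-move up to another curl removable by ${\rm ST}$-Reidemeister~I. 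With these clarifications the sketch is complete and in line with \cite{DL1}.
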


\begin{figure}[H]
\begin{center}
\includegraphics[width=5in]{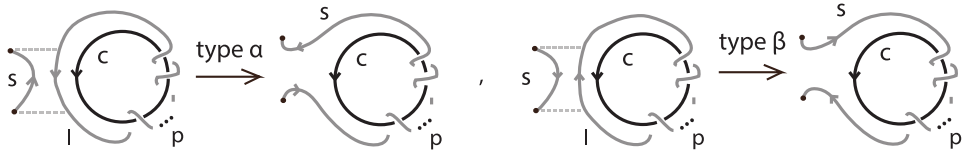}
\end{center}
\caption{ The two types of band moves for $q=1$. }
\label{bmov}
\end{figure}

\subsection{Mixed braids and braid equivalence for knots and links in $L(p, q)$}

In this subsection we introduce the notion of mixed braids, the mixed braid group $B_{1, n}$ and braid equivalence in $L(p, q)$. Note that a mixed braid in $S^{3}$ is a braid where, without loss of generality, its first strand represents $\widehat{I}$, the fixed part, and the other strands, $\beta$, represent the moving part $L$. We shall call the subbraid $\beta$ the \textit{moving part} of $I\cup \beta$ (see bottom left hand side of Figure~\ref{bmov}). We now recall the analogue of the Alexander theorem for knots in ST (cf. Thm.~1 \cite{La2}):

\begin{thm}[{\bf The analogue of the Alexander theorem for ST}]
A mixed link diagram $\widehat{I}\cup \widetilde{L}$ of $\widehat{I}\cup L$ may be turned into a \textit{mixed braid} $I\cup \beta$ with isotopic closure.
\end{thm}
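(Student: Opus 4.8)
The statement is a relative version of the classical Alexander theorem, where the fixed strand $\widehat{I}$ plays a purely passive role, and the plan is to adapt the standard braiding algorithm to the moving part $\widetilde{L}$ while never disturbing $\widehat{I}$. First I would fix a braid axis: choose a point $O$ in the plane of the diagram (to be thought of as the point at infinity of the braid closure) and orient everything. Since $\widehat{I}$ is an unknotted simple closed curve, after a preliminary planar isotopy we may take it to be a round circle encircling $O$ exactly once in the positive sense; thus every arc of $\widehat{I}$ is already \emph{good} (consistently oriented around $O$), i.e.\ $\widehat{I}$ is already in braided position and will be kept fixed from now on. It remains to bring $\widetilde{L}$ into braided position relative to $O$ without moving $\widehat{I}$.

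Next I would run the braiding algorithm on $\widetilde{L}$. Subdivide $\widetilde{L}$ into finitely many arcs, each contained in a small disk meeting the rest of the diagram in a controlled way, and classify each arc as good or \emph{bad} according to whether it runs positively or negatively around $O$. For each bad arc I perform a braiding move (an $L$-move in the Lambropoulou sense): cut the bad arc and reconnect its endpoints by a pair of arcs that run the other way around $O$, pushing one of them entirely over, and the other entirely under, \emph{all} other strands of the diagram -- in particular over, resp.\ under, the fixed strand $\widehat{I}$. Crucially, each such move is realized by an ambient isotopy of $S^3$ that is supported away from $\widehat{I}$ (the projection of $\widehat{I}$ is never touched; arcs of $L$ merely pass across it), so it is an isotopy of the mixed link keeping $\widehat{I}$ fixed, and hence does not change the isotopy type of the closure. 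One then invokes the standard termination argument (Alexander's original count, or Vogel's/Yamada's refinement): after finitely many braiding moves every arc of $\widetilde{L}$ is good, while $\widehat{I}$, untouched, remains good throughout. A diagram in which every arc of $\widehat{I}\cup\widetilde{L}$ is good is, up to planar isotopy, exactly the closure of a braid in which the first strand is $\widehat{I}$ and the remaining strands form $\beta$; that is, a mixed braid $I\cup\beta$ with closure the original mixed link.

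The main point requiring care -- and the place where the proof differs from the ordinary Alexander theorem -- is the verification that the braiding procedure can indeed be carried out so that $\widehat{I}$ is never moved and no bad arc is ever created on it. This is handled by observing that braiding moves are applied only to subarcs of $\widetilde{L}$ and, by general position, can be chosen to leave the projection of $\widehat{I}$ pointwise fixed, the arcs of $L$ being routed over or under it as one pleases; consequently the goodness of $\widehat{I}$ is preserved and the termination count only needs to track bad arcs of $\widetilde{L}$. I would close by remarking that this is precisely the content of Theorem~1 of \cite{La2}, so for the details of the algorithm and its termination the reader may be referred there.
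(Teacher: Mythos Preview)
Your sketch is correct and follows exactly the Lambropoulou $L$-move braiding algorithm that underlies the cited result; the paper itself does not give a proof but simply records the statement as Theorem~1 of \cite{La2}, so your outline is in fact more detailed than what appears here and matches the approach of the reference.
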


In order to translate isotopy for links in $L(p, q)$ into braid equivalence, we first perform the technique of {\it standard parting} introduced in \cite{LR2} in order to separate the moving strands from the fixed strand that represents the lens spaces $L(p, q)$. This can be realized by pulling each pair of corresponding moving strands to the right and {\it over\/} or {\it under\/} the fixed strand that lies on their right. Then, we define a {\it braid band move} to be a move between mixed braids, which is a $\alpha$-type band move between their closures. It starts with a little band oriented downward, which, before sliding along a surgery strand, gets one twist {\it positive\/} or {\it negative\/} (see bottom of Figure~\ref{bbmov} for $q=1$ and Figure~\ref{bbm2} for $q>1$). For now we shall only consider the $\alpha$-type band moves, since the result of an $\alpha$-type band move remain braided, while the result of a $\beta$-band move does not. The $\beta$-band moves will become important later, when we compute KBSM($L(p, q)$) diagrammatically. Note also that there are two different types of braid band moves; the positive and the negative braid band move, depending on the kind of the twist the moving strand gets before sliding along the fixed strand.

\begin{figure}
\begin{center}
\includegraphics[width=3.4in]{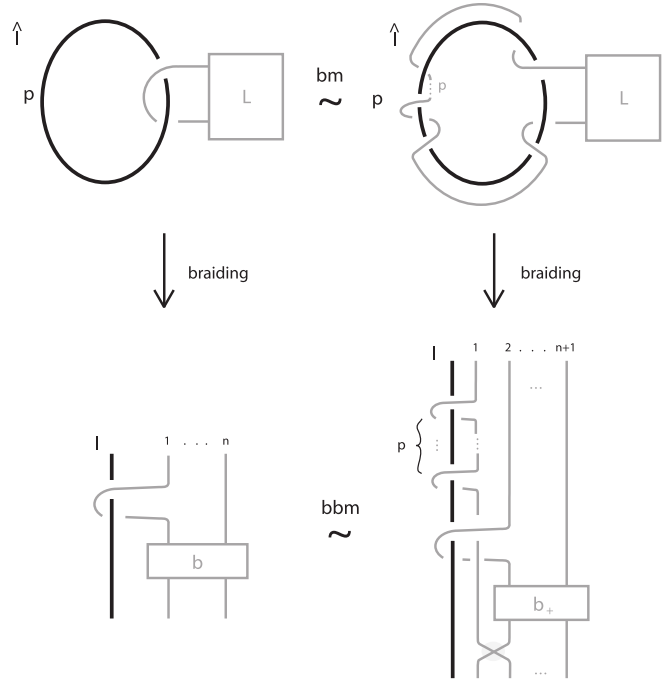}
\end{center}
\caption{ Isotopy in $L(p,1)$ and the two types of braid band moves on mixed braids.}
\label{bbmov}
\end{figure}

\begin{figure}
\begin{center}
\includegraphics[width=2.5in]{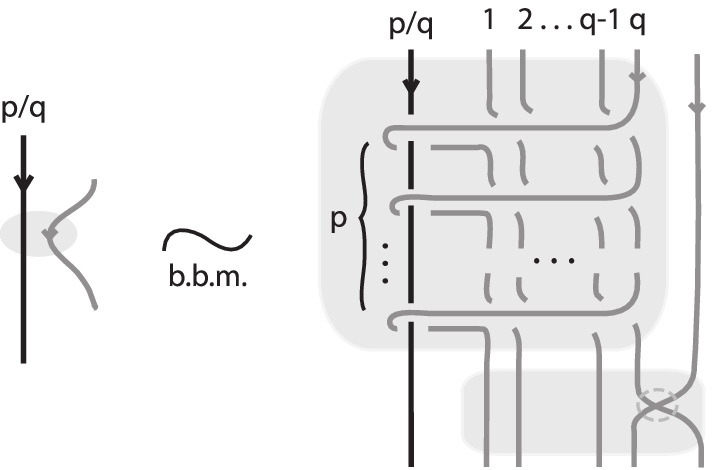}
\end{center}
\caption{The braid band move for $L(p,q),\, q>1$.}
\label{bbm2}
\end{figure}

The sets of braids related to ST form groups, which are in fact the Artin braid groups of type B, denoted $B_{1,n}$, with presentation:

\[ B_{1,n} = \left< \begin{array}{ll}  \begin{array}{l} t, \sigma_{1}, \ldots ,\sigma_{n-1}  \\ \end{array} & \left| \begin{array}{l}
\sigma_{1}t\sigma_{1}t=t\sigma_{1}t\sigma_{1} \ \   \\
 t\sigma_{i}=\sigma_{i}t, \quad{i>1}  \\
{\sigma_i}\sigma_{i+1}{\sigma_i}=\sigma_{i+1}{\sigma_i}\sigma_{i+1}, \quad{ 1 \leq i \leq n-2}   \\
 {\sigma_i}{\sigma_j}={\sigma_j}{\sigma_i}, \quad{|i-j|>1}  \\
\end{array} \right.  \end{array} \right>, \]

\noindent where the generators $\sigma _{i}$ and $t$ are illustrated in Figure~\ref{genh}(i).

\smallbreak

Let now $\mathcal{L}$ denote the set of oriented knots and links in ST. Isotopy in $L(p, q)$ is then translated on the level of mixed braids by means of the following theorem:

\begin{thm}[Theorem~5, \cite{LR2}] \label{markov}
 Let $L_{1} ,L_{2}$ be two oriented links in $L(p,1)$ and let $I\cup \beta_{1} ,{\rm \; }I\cup \beta_{2}$ be two corresponding mixed braids in $S^{3}$. Then $L_{1}$ is isotopic to $L_{2}$ in $L(p,1)$ if and only if $I\cup \beta_{1}$ is equivalent to $I\cup \beta_{2}$ in $\mathcal{B}$ by the following moves:
\[ \begin{array}{clll}
(i)  & Conjugation:         & \alpha \sim \beta^{-1} \alpha \beta, & {\rm if}\ \alpha ,\beta \in B_{1,n}. \\
(ii) & Stabilization\ moves: &  \alpha \sim \alpha \sigma_{n}^{\pm 1} \in B_{1,n+1}, & {\rm if}\ \alpha \in B_{1,n}. \\
(iii) & Loop\ conjugation: & \alpha \sim t^{\pm 1} \alpha t^{\mp 1}, & {\rm if}\ \alpha \in B_{1,n}. \\
(iv) & Braid\ band\ moves: & \alpha \sim {t}^p \alpha_+ \sigma_1^{\pm 1}, & a_+\in B_{1, n+1},\ {\rm for}\ L(p, 1),\\
& &  \beta \sim \left[(\sigma_{q-1}\, \ldots\, \sigma_1)\, t\right]^p\,\beta+ \sigma_q^{\pm 1}, & a_+\in B_{1, n+q},\ {\rm for}\ L(p,q),
\end{array}
\]

\noindent where $\alpha_+$ is the word $\alpha$ with all indices shifted by +1 and $\beta_+$ is the word $\beta$ with all indices shifted by $q$. Note that moves (i), (ii) and (iii) correspond to link isotopy in {\rm ST}.
\end{thm}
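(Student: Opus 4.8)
The statement to be proved is the braid-equivalence (Markov-type) theorem for $L(p,q)$, Theorem~\ref{markov}. The plan is to reduce braid equivalence in $L(p,q)$ to the already-understood ingredients: the analogue of the Alexander theorem for ST (stated above, turning any mixed link into a mixed braid), the classical Markov theorem for braids in $S^3$, and the topological isotopy theorem for $L(p,q)$ (Theorem~1 in the excerpt), which asserts that two mixed link diagrams represent isotopic links in $L(p,q)$ iff they differ by isotopy in ST together with a finite sequence of a single type of band moves. The strategy is to show that each of these topological/diagrammatic moves, once its effect is braided, is generated on the braid level precisely by moves (i)--(iv), and conversely that each of moves (i)--(iv) realizes an admissible isotopy in $L(p,q)$.

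The first step is soundness: each braid move preserves the $L(p,q)$-isotopy type of the closure. Conjugation (i) and the stabilization moves (ii) are exactly the classical Markov moves in the complementary $S^3$ and hence preserve isotopy even in ST; loop conjugation (iii) records sliding a strand of the moving part around the fixed strand $\widehat I$, which is an isotopy in ST; and the braid band move (iv) is, by construction, an $\alpha$-type band move performed between the closures, so its effect is exactly the surgery-induced move that Theorem~1 permits. Thus every relation in (i)--(iv) descends to an isotopy in $L(p,q)$.

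The second and substantive step is completeness. Starting from two mixed braids $I\cup\beta_1$, $I\cup\beta_2$ whose closures are isotopic in $L(p,q)$, I would invoke Theorem~1 to write this isotopy as a finite sequence of (a) isotopies in ST and (b) $\alpha$-type band moves. For the ST-isotopies, I would apply the braid equivalence theorem for ST (the Markov theorem for knots in the solid torus, which is precisely moves (i)--(iii)); the key technical device is \emph{standard parting} of \cite{LR2}, which lets one pull the moving strands off the fixed strand $\widehat I$ so that the geometric braiding is made explicit and any ambient ST-isotopy is decomposed into conjugations, stabilizations, and loop conjugations. For the band-move steps, the point is that an $\alpha$-type band move between link diagrams, after re-braiding via the Alexander theorem for ST, differs from the braid band move (iv) only by an ST-isotopy of the resulting braid. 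Here the surgery data enter: sliding the little band once around the fixed strand contributes the factor $t^p$ for $q=1$, and the factor $\bigl[(\sigma_{q-1}\cdots\sigma_1)\,t\bigr]^p$ for general $q$, reflecting that the $(p,q)$-homeomorphism wraps the band $p$ times longitudinally while the $q$ intervening strands force the extra $\sigma$-generators; the $\pm 1$ twist on the new crossing $\sigma_q^{\pm 1}$ records the framing choice (positive or negative braid band move). I would verify that the braided image of a band move is conjugate, up to stabilization, to the stated normal form in (iv).

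The main obstacle will be the band-move step: one must show that \emph{every} way of braiding the result of a topological $\alpha$-type band move is related, by the ST-moves (i)--(iii), to the single canonical form $t^p\alpha_+\sigma_1^{\pm1}$ (resp.\ the $L(p,q)$ form). This requires care because the Alexander-theorem procedure is not canonical, and because one must track the index shifts (the word $\alpha$ becoming $\alpha_+$ with all indices shifted, and $\beta$ becoming $\beta_+$ shifted by $q$) that arise when the new surgery strands are inserted. The cleanest route is to fix a standard position for the band before sliding, apply standard parting to keep the configuration braided throughout the slide, and then read off the word; any deviation from this standard position is absorbed by conjugation and loop conjugation. Concluding, one assembles the finitely many ST-isotopy and band-move steps into a finite sequence of moves (i)--(iv), establishing the ``only if'' direction and completing the equivalence.
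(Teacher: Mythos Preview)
The paper does not contain its own proof of Theorem~\ref{markov}; the result is quoted verbatim from \cite{LR2} (as indicated by the citation ``Theorem~5, \cite{LR2}'' in the header) and is used as background in the Preliminaries section without argument. So there is no in-paper proof to compare your proposal against.

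That said, your sketch is a faithful outline of the strategy actually carried out in \cite{LR1,LR2}: soundness of (i)--(iv) is immediate, and completeness is obtained by combining the mixed-link isotopy theorem (Theorem~1 here), the Alexander theorem for ST, the Markov theorem for ST (moves (i)--(iii)), and the verification that any braided $\alpha$-type band move is ST-equivalent to the canonical form in (iv). You correctly identify the one genuinely delicate point, namely showing that every braiding of a band move reduces via (i)--(iii) to the stated normal form $t^p\alpha_+\sigma_1^{\pm1}$ (or its $(p,q)$ analogue); in \cite{LR2} this is handled exactly as you suggest, by fixing a standard position for the little band and using parting and $L$-moves to absorb the ambiguity. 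Your proposal would constitute an acceptable proof sketch, but for the purposes of this paper the theorem is simply imported.
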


\begin{nt}\rm
We denote a braid band move by bbm and, specifically, the result of a positive or negative braid band move performed on a mixed braid $\beta$ by $bbm_{\pm}(\beta)$.
\end{nt}

Note also that in \cite{LR2} it was shown that the choice of the position of connecting the two components after the performance of a bbm is arbitrary.

\section{Knot algebras and invariants of knots in ST}\label{SolidTorus}

In this section we present the most generic invariant, $V$, for knots and links in ST that captures the Kauffman bracket skein module of ST. Therefore, $V$ is the most appropriate invariant to be extended to a Kauffman bracket invariant for knots and links in $L(p, q)$. This invariant is derived via a unique Markov trace on the generalized Temperley-Lieb algebra of type B, $TL_{1, n}$. This algebra is defined in \S~\ref{gtln} and the Markov trace factors through the unique Markov trace on the generalized Hecke algebra of type B. We first recall some results on the generalized Hecke algebra of type B, $H_{1, n}$, since $TL_{1, n}$ is defined as the quotient of $H_{1, n}$ over an appropriate ideal.

\subsection{The generalized Hecke algebra of type B}

In \cite{La2} the most generic analogue of the HOMFLYPT polynomial, $X$, for links in the solid torus ST has been derived from the generalized Hecke algebras of type $\rm B$, $H_{1,n}$, via a unique Markov trace constructed on them. This algebra was defined by Lambropoulou as the quotient of ${\mathbb C}\left[q^{\pm 1} \right]B_{1,n}$ over the quadratic relations ${g_{i}^2=(q-1)g_{i}+q}$. Namely:

\begin{equation*}
H_{1,n}(q)= \frac{{\mathbb C}\left[q^{\pm 1} \right]B_{1,n}}{ \langle \sigma_i^2 -\left(q-1\right)\sigma_i-q \rangle}.
\end{equation*}

In \cite{La2} it is also shown that the following sets form linear bases for ${\rm H}_{1,n}(q)$ (\cite[Proposition~1 \& Theorem~1]{La2}):

\begin{equation}\label{bHeck}
\begin{array}{llll}
 (i) & \Sigma_{n} & = & \{t_{i_{1} } ^{k_{1} } \ldots t_{i_{r}}^{k_{r} } \cdot \sigma \} ,\ {\rm where}\ 0\le i_{1} <\ldots <i_{r} \le n-1,\\
 (ii) & \Sigma^{\prime} _{n} & = & \{ {t^{\prime}_{i_1}}^{k_{1}} \ldots {t^{\prime}_{i_r}}^{k_{r}} \cdot \sigma \} ,\ {\rm where}\ 0\le i_{1} < \ldots <i_{r} \le n-1, \\
\end{array}
\end{equation}
\noindent where $k_{1}, \ldots ,k_{r} \in {\mathbb Z}$, $t_0^{\prime}\ =\ t_0\ :=\ t, \quad t_i^{\prime}\ =\ g_i\ldots g_1tg_1^{-1}\ldots g_i^{-1} \quad {\rm and}\quad t_i\ =\ g_i\ldots g_1tg_1\ldots g_i$ are the `looping elements' in ${\rm H}_{1, n}(q)$ (see Figure~\ref{genh}(ii)) and $\sigma$ a basic element in the Iwahori-Hecke algebra of type A, ${\rm H}_{n}(q)$, for example in the form of the elements in the set \cite{Jo}:

$$ S_n =\left\{(g_{i_{1} }g_{i_{1}-1}\ldots g_{i_{1}-k_{1}})(g_{i_{2} }g_{i_{2}-1 }\ldots g_{i_{2}-k_{2}})\ldots (g_{i_{p} }g_{i_{p}-1 }\ldots g_{i_{p}-k_{p}})\right\}, $$

\noindent for $1\le i_{1}<\ldots <i_{p} \le n-1{\rm \; }$. In \cite{La2} the bases $\Sigma^{\prime}_{n}$ are used for constructing a Markov trace on $\mathcal{H}:=\bigcup_{n=1}^{\infty}\, H_{1, n}$, and using this trace, a universal HOMFLYPT-type invariant for oriented links in ST is constructed.

\begin{figure}[H]
\begin{center}
\includegraphics[width=5.3in]{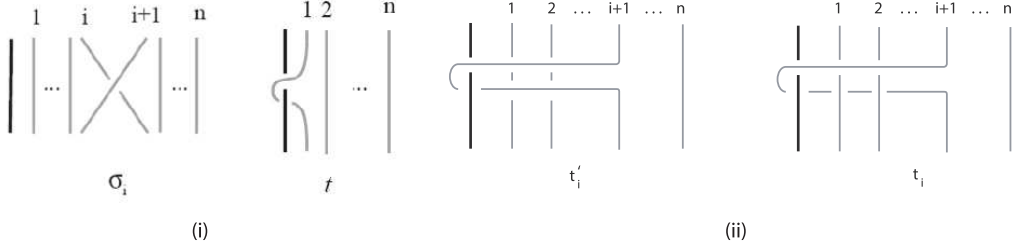}
\end{center}
\caption{The generators of $B_{1, n}$ and the `looping' elements $t^{\prime}_{i}$ and $t_{i}$.}
\label{genh}
\end{figure}

\begin{thm}{\cite[Theorem~6 \& Definition~1]{La2}} \label{tr}
Given $z, s_{k}$ with $k\in {\mathbb Z}$ specified elements in $R={\mathbb C}\left[q^{\pm 1} \right]$, there exists a unique linear Markov trace function on $\mathcal{H}$:

\begin{equation*}
{\rm tr}:\mathcal{H}  \to R\left(z,s_{k} \right),\ k\in {\mathbb Z}
\end{equation*}

\noindent determined by the rules:

\[
\begin{array}{lllll}
(1) & {\rm tr}(ab) & = & {\rm tr}(ba) & \quad {\rm for}\ a,b \in {\rm H}_{1,n}(q) \\
(2) & {\rm tr}(1) & = & 1 & \quad {\rm for\ all}\ {\rm H}_{1,n}(q) \\
(3) & {\rm tr}(ag_{n}) & = & z{\rm tr}(a) & \quad {\rm for}\ a \in {\rm H}_{1,n}(q) \\
(4) & {\rm tr}(a{t^{\prime}_{n}}^{k}) & = & s_{k}{\rm tr}(a) & \quad {\rm for}\ a \in {\rm H}_{1,n}(q),\ k \in {\mathbb Z} \\
\end{array}
\]

\bigbreak

\noindent Then, the function $X:\mathcal{L}$ $\rightarrow R(z,s_{k})$

\begin{equation*}
X_{\widehat{\alpha}} = \Delta^{n-1}\cdot \left(\sqrt{\lambda } \right)^{e}
{\rm tr}\left(\pi \left(\alpha \right)\right),
\end{equation*}

\noindent is an invariant of oriented links in {\rm ST}, where $\Delta:=-\frac{1-\lambda q}{\sqrt{\lambda } \left(1-q\right)}$, $\lambda := \frac{z+1-q}{qz}$, $\alpha \in B_{1,n}$ is a word in the $\sigma _{i}$'s and $t^{\prime}_{i} $'s, $\widehat{\alpha}$ is the closure of $\alpha$, $e$ is the exponent sum of the $\sigma _{i}$'s in $\alpha $, $\pi$ the canonical map of $B_{1,n}$ to ${\rm H}_{1,n}(q)$, such that $t\mapsto t$ and $\sigma _{i} \mapsto g_{i}$.
\end{thm}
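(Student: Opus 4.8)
The plan is to establish the statement in two stages: first the existence and uniqueness of the Markov trace ${\rm tr}$ on $\mathcal{H}=\bigcup_{n}H_{1,n}(q)$, and then the fact that the normalized functional $X$ descends to an invariant of oriented links in ST. The construction of ${\rm tr}$ follows the template of Jones' construction of the Ocneanu trace on the Iwahori--Hecke algebra, adapted to the B-type tower $H_{1,1}(q)\subset H_{1,2}(q)\subset\cdots$. The preliminary input is a relative basis lemma: using the defining relations of $B_{1,n+1}$ and the quadratic relations $g_i^2=(q-1)g_i+q$, one shows that the basis $\Sigma^{\prime}_{n+1}$ of (\ref{bHeck})(ii) is obtained from $\Sigma^{\prime}_{n}$ by appending a single factor in the new generators $g_n$ and $t^{\prime}_n$ --- concretely, that $H_{1,n+1}(q)$ is spanned over $H_{1,n}(q)$ by the words ${t^{\prime}_n}^{k}\,g_n g_{n-1}\cdots g_j$ and ${t^{\prime}_n}^{k}$ with $k\in\mathbb Z$, $1\le j\le n$. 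Granting this, I would \emph{define} ${\rm tr}$ on $H_{1,n+1}(q)$ by imposing linearity together with rules (2), (3), (4) on these relative generators and reducing to the trace already built on $H_{1,n}(q)$; the reductions require the identities $g_n^{-1}=q^{-1}g_n-(1-q^{-1})$ and $t^{\prime}_n=g_n t^{\prime}_{n-1}g_n^{-1}$, and consistency with rule (2) forces $s_0=1$.

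The core of the argument is the verification that this recipe is (i) independent of the way an element is expressed through the relative generators and (ii) a trace, i.e. ${\rm tr}(ab)={\rm tr}(ba)$. As in the classical case these are proved together by a single induction on $n$: by bilinearity it is enough to check ${\rm tr}(a\rho)={\rm tr}(\rho a)$ when $a$ runs over $\Sigma^{\prime}_{n+1}$ and $\rho$ over the generators $g_1,\dots,g_n,t$, commuting $\rho$ past the looping factors with the braid and quadratic relations and applying rules (3), (4) together with the inductive hypothesis. \emph{This step is the main obstacle.} In contrast with the A-type case, the looping generators $t^{\prime}_i$ do not commute with the neighbouring $g_j$ and interact through the mixed relation $\sigma_1 t\sigma_1 t=t\sigma_1 t\sigma_1$; moving a generator from the left of a basis word to its right therefore produces cascades of lower-index looping elements, and one must check that ${\rm tr}$ reassembles correctly over all the resulting terms. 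Organizing this case analysis --- essentially proving the relative basis lemma with enough bookkeeping to track these cascades --- is where the real work lies.

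Uniqueness is the easy direction and comes for free from the same reduction: any linear functional satisfying (1)--(4) is forced on each element of $\Sigma^{\prime}_n$ by conjugating its top looping factor (or top $g_{n-1}$) to the end of the word via property (1) and then applying (3) or (4), so ${\rm tr}$ is determined by $q$, $z$ and the $s_k$.

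For the invariant, by the braid-equivalence theorem for ST (moves (i)--(iii) of Theorem~\ref{markov}) it suffices to show that $X_{\widehat{\alpha}}=\Delta^{n-1}(\sqrt{\lambda})^{e}\,{\rm tr}(\pi(\alpha))$ is unchanged under conjugation, loop conjugation, and positive and negative stabilization. Conjugation $\alpha\mapsto\beta^{-1}\alpha\beta$ and loop conjugation $\alpha\mapsto t^{\pm1}\alpha t^{\mp1}$ preserve both $n$ and the exponent sum $e$, and leave ${\rm tr}$ unchanged by property (1) (here $t=t^{\prime}_0\in H_{1,n}(q)$), so these cases are immediate. Positive stabilization $\alpha\mapsto\alpha\sigma_n$ sends $n\mapsto n+1$, $e\mapsto e+1$ and, by rule (3), ${\rm tr}(\pi(\alpha)g_n)=z\,{\rm tr}(\pi(\alpha))$, so invariance is equivalent to $\Delta\sqrt{\lambda}\,z=1$; for negative stabilization, expanding $g_n^{-1}=q^{-1}g_n-(1-q^{-1})$ and using (3) gives the requirement $\Delta(\sqrt{\lambda})^{-1}(q^{-1}z-1+q^{-1})=1$. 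Solving this pair of equations pins down $\lambda=\frac{z+1-q}{qz}$ and $\Delta=\frac{1}{\sqrt{\lambda}\,z}=-\frac{1-\lambda q}{\sqrt{\lambda}(1-q)}$, exactly the values in the statement, while the factor $(\sqrt{\lambda})^{e}$ is the usual writhe normalization correcting the framing dependence. This would complete the argument.
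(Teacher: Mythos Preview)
The paper does not prove this statement: Theorem~\ref{tr} is quoted verbatim from \cite[Theorem~6 \& Definition~1]{La2} and used as input, with no argument supplied. So there is no ``paper's own proof'' to compare against here.

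That said, your outline is a faithful sketch of how the result is actually established in \cite{La2}. The inductive construction of ${\rm tr}$ via the relative basis $\Sigma^{\prime}_{n+1}$ over $\Sigma^{\prime}_n$, the simultaneous verification of well-definedness and the trace property by commuting generators past basis words, and the derivation of $\lambda$ and $\Delta$ from the two stabilization constraints are exactly the ingredients used there. You have also correctly located the genuine difficulty: the mixed relation $g_1tg_1t=tg_1tg_1$ makes the case analysis for rule~(1) substantially heavier than in the A-type Ocneanu trace, and the bookkeeping of the ``cascades'' of lower-index looping elements is the technical heart of the proof. Your sketch flags this but does not carry it out; that is acceptable for a proposal, but be aware that this is where the bulk of the work in \cite{La2} actually sits.
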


\begin{remark}\rm
As shown in \cite{La2, DL2} the invariant $X$ recovers the HOMFLYPT skein module of ST. For a survey on the HOMFLYPT skein module of the lens spaces $L(p, 1)$ via braids, the reader is referred to \cite{DL3, DGLM}.
\end{remark}

\subsection{The generalized Temperley-Lieb algebra of type B}\label{gtln}

We now introduce the analogue of the (normalized) Kauffman bracket polynomial, $V$, for links in the solid torus via the generalized Temperley-Lieb algebra of type B.

\begin{defn}\rm
The {\it generalized Temperley-Lieb algebra of type B}, $TL_{1, n}$, is defined as the quotient of the generalized Hecke algebra of type B, $H_{1,n}(q)$, over the ideal generated by the elements $\sigma_{i, i+1},\, i\in \mathbb{N}\backslash \{0\}$ (recall Eq.~\ref{ideal}).
\end{defn}

\begin{remark}\rm
In \cite{FG} the analogue of the normalized Kauffman bracket polynomial, $V$, for links in the solid torus $\rm ST$ has been derived from the Temperley-Lieb algebra of type B, ${TL}_{n}^{{\rm B}}$. This algebra defined as a quotient of the Hecke algebra of type B, ${H}_{1, n}(q, Q)$, over the ideal generated by the $\sigma_{i, i+1}$ elements and:
\[
\begin{array}{lcl}
h_B & := & 1 + u\ \sigma_1 + v\ t + uv\ (\sigma_1 t + t \sigma_1) + u^2v\ \sigma_1 t \sigma_1 + uv^2\ t \sigma_1 t + (uv)^2\ \sigma_1 t \sigma_1 t
\end{array}
\]
It is also worth mentioning that in \cite{FG} a different presentation for $H_{1, n}$ is used, that involves the parameters $u, v$ and the quadratic relations 
\begin{equation}\label{quad}
{\sigma_{i}^2=(u-u^{-1})\sigma_{i}+1}.
\end{equation}

\noindent One can switch from one presentation to the other by a taking $\sigma_i = u \sigma_i$, $t = v t$ and $q = u^2$. We will adapt this setting from now on.
\end{remark}

Since the generalized Temperley-Lieb algebra of type B is a quotient of the generalized Hecke algebra of type B, we look for necessary and sufficient conditions so that the Markov trace defined in $H_{1, n}$ factors through to ${TL}_{1, n}$. We have the following result:

\begin{thm}\label{tr2}
The trace defined in $H_n(1, q)$ factors through to $TL_{1, n}$ if and only if the trace parameters take the following values:
\begin{equation}\label{parameters}
z=-\frac{1}{u(1+u^2)}.
\end{equation}
\end{thm}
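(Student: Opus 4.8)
The plan is to determine the constraint on the trace parameter $z$ imposed by requiring that $\mathrm{tr}$ kill the ideal generated by the elements $\sigma_{i,i+1}$. Since the ideal is two-sided and generated by the $\sigma_{i,i+1}$ for $i\in\mathbb N\setminus\{0\}$, and since $\mathrm{tr}$ already satisfies the trace property $\mathrm{tr}(ab)=\mathrm{tr}(ba)$, it suffices to show that $\mathrm{tr}$ vanishes on $a\,\sigma_{i,i+1}\,b$ for all $a,b\in H_{1,n}$; using the trace property and the basis description of $H_{1,n}$ this reduces to checking $\mathrm{tr}(w\,\sigma_{i,i+1})=0$ for $w$ ranging over an appropriate spanning set, and in fact — using conjugation invariance, the relations $t\sigma_i=\sigma_i t$ for $i>1$, and the inductive structure of the Markov trace (rules (3) and (4) strip off the top strand) — it is enough to treat the single element $\sigma_{1,2}$, i.e. to impose $\mathrm{tr}(a\,\sigma_{1,2})=0$ for $a\in H_{1,3}$, and then further reduce via linearity and the trace rules to a finite check.

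Concretely, I would first rewrite $\sigma_{1,2}=1+u(\sigma_1+\sigma_2)+u^2(\sigma_1\sigma_2+\sigma_2\sigma_1)+u^3\sigma_1\sigma_2\sigma_1$ in the Hecke algebra, and compute $\mathrm{tr}(\sigma_{1,2})$ directly by peeling off $\sigma_2$ using rule (3) (noting $\mathrm{tr}(\sigma_1\sigma_2\sigma_1)=\mathrm{tr}(\sigma_1^2\sigma_2)$ after a cyclic permutation, then applying the quadratic relation $\sigma_i^2=(u-u^{-1})\sigma_i+1$ in the $u$-presentation, so that $\sigma_1^2\sigma_2=(u-u^{-1})\sigma_1\sigma_2+\sigma_2$). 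This expresses $\mathrm{tr}(\sigma_{1,2})$ as a polynomial in $u$ and $z$; setting it to zero yields a linear equation in $z$ whose solution I expect to be exactly $z=-\tfrac{1}{u(1+u^2)}$. I would then verify that this same value makes $\mathrm{tr}(t^k\sigma_{1,2})=0$ and $\mathrm{tr}(\sigma_j \sigma_{1,2})=0$ (for the relevant generators $\sigma_j$, $t$ that can multiply $\sigma_{1,2}$ on the left), using rule (4) and the commutation/conjugation relations to reduce each such trace to a scalar multiple of $\mathrm{tr}(\sigma_{1,2})$ or of a similar already-computed quantity; since the ideal is generated over the algebra and $\mathrm{tr}$ is a trace, these checks on generators suffice for the "if" direction. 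For the "only if" direction, the single computation $\mathrm{tr}(\sigma_{1,2})=0$ already forces $z$ to take the stated value, so necessity is immediate.

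The main obstacle I anticipate is bookkeeping: one must be careful that the generalized Hecke algebra of type B has the looping generators $t,t'_i$ in addition to the $\sigma_i$, so a priori the ideal $\langle \sigma_{i,i+1}\rangle$ could meet "$t$-heavy" parts of the algebra, and one has to argue — using the commutation relations $t\sigma_i=\sigma_i t$ for $i>1$ and the braid relation $\sigma_1 t\sigma_1 t=t\sigma_1 t\sigma_1$ — that no additional constraints on the parameters $s_k$ arise; the point is that $\sigma_{i,i+1}$ involves only $\sigma_i,\sigma_{i+1}$ with $i\geq1$, which commute with $t$, so conjugating and applying the trace never produces a condition coupling $z$ with the $s_k$. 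Once that reduction is in place, the computation collapses to the single scalar identity. I would also double-check consistency with the fact (stated in the preceding remark) that the type-B Temperley–Lieb quotient of \cite{FG} requires an analogous relation, so the value of $z$ here should be the "type A part" of that constraint, which is a useful sanity check on the arithmetic.
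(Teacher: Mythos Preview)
Your core approach---compute $\mathrm{tr}(\sigma_{1,2})$ directly using the Markov trace rules and the quadratic relation $\sigma_i^2=(u-u^{-1})\sigma_i+1$---is exactly what the paper does. However, there is a concrete miscalculation in your expectation: the resulting equation in $z$ is \emph{not} linear but quadratic. Carrying out precisely the computation you outline yields
\[
\mathrm{tr}(\sigma_{1,2}) \;=\; 1 + u(2+u^2)\,z + u^2(1+u^2)\,z^2 \;=\; (uz+1)\bigl(u(1+u^2)z+1\bigr),
\]
so $\mathrm{tr}(\sigma_{1,2})=0$ has \emph{two} solutions, $z=-1/u$ and $z=-1/\bigl(u(1+u^2)\bigr)$. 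Consequently your claim that ``necessity is immediate'' from this single equation is false as stated: the computation alone does not single out the value in the theorem. The paper obtains the same factorization and then discards the root $z=-1/u$ by appeal to \cite{FG}, on the grounds that only the second value is of topological interest. You would need to supply (or cite) such an argument to complete the ``only if'' direction.

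A secondary issue: in your reduction for the ``if'' direction you assert that $\sigma_{i,i+1}$ ``involves only $\sigma_i,\sigma_{i+1}$ with $i\geq 1$, which commute with $t$''. This is false for $i=1$: in $B_{1,n}$ one has $t\sigma_i=\sigma_i t$ only for $i>1$, while $\sigma_1$ and $t$ satisfy the braid-type relation $\sigma_1 t\sigma_1 t = t\sigma_1 t\sigma_1$. So the argument that no conditions on the $s_k$ arise does not go through by commutation alone when $i=1$; one really must check terms such as $\mathrm{tr}(t^k\sigma_{1,2})$ separately. (The paper, for its part, does not carry out this verification explicitly either---it simply imposes $\mathrm{tr}(\sigma_{i,i+1})=0$ and defers to \cite{FG}---so your intention to be more careful here is commendable, but the justification you sketch needs repair.)
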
 

\begin{proof}
In order to evaluate the values of $z$ so that the trace function factors through ${TL}_{1, n}$, we solve the equation $tr(g_{i, i+1})\, = \, 0$ and we have:
\[
\begin{array}{rclc}
u^3\, tr(g_i\, g_{i+1}\, g_i) & = & -1\, -2u\, z\, -2u^2\, z^2 & \Leftrightarrow\\
&&&\\
u^3\, z\, tr(g_i^2) & = & -1\, -2u\, z\, -2u^2\, z^2 & \Leftrightarrow\\
&&&\\
u^3\, z^2\, (u-u^{-1})\, +\, u^3\, z & = & -1\, -2u\, z\, -2u^2\, z^2 & \Leftrightarrow\\
&&&\\
(uz\, +\, 1)\, (u^3z\, +\, uz\, +\, 1) & = & 0 & \Leftrightarrow\\
&&&\\
z\, =\, -\, \frac{1}{u} & {\rm or} & z\, =\, -\, \frac{1}{u\, (1+u^2)}.  &\\
\end{array}
\]
As explained in \cite{FG}, only the values in (\ref{parameters}) are of topological interest and the proof is now concluded.
\end{proof}

Note now that for $z=-\, \frac{1}{u(1+u^2)}$, one deduces $\lambda\ =\ u^4$. Following \cite{FG}, we obtain the following result:

\begin{thm}\label{inva}
The following invariant is the most generic invariant for knots and links in {\rm ST}:

\begin{equation*}
V^{{\rm B}}_{\widehat{\alpha}}(u ,v)\ :=\ \left(-\frac{1+u^2}{u} \right)^{n-1} \left(u\right)^{2e} {\rm tr}\left(\overline{\pi} \left(\alpha \right)\right),
\end{equation*}

\noindent where $\alpha \in B_{1,n}$ is a word in the $\sigma _{i}$'s and $t^{\prime}_{i} $'s, $\widehat{\alpha}$ is the closure of $\alpha$, $e$ is the exponent sum of the $\sigma _{i}$'s in $\alpha $, $\overline{\pi}$ the canonical map of $B_{1,n}$ to ${\rm TL}_{1, n}$, such that $t\mapsto t$ and $\sigma _{i} \mapsto g_{i}$.
\end{thm}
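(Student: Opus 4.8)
\textbf{Proof plan for Theorem~\ref{inva}.}

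The plan is to transport the universal HOMFLYPT-type invariant $X$ of Theorem~\ref{tr} through the quotient map $H_{1,n}(q) \twoheadrightarrow TL_{1,n}$, using Theorem~\ref{tr2} to guarantee that the Markov trace descends. First I would invoke Theorem~\ref{tr2}: with the choice $z = -\tfrac{1}{u(1+u^2)}$ the trace $\mathrm{tr}$ vanishes on the ideal $\langle \sigma_{i,i+1}\rangle$, hence factors through a well-defined trace on $\mathcal{TL} := \bigcup_n TL_{1,n}$, still satisfying the Markov property (3) with respect to the images $g_i$ and the looping elements $t'_i$. Next I would record the parameter specialisation: substituting $z = -\tfrac{1}{u(1+u^2)}$ and $q = u^2$ into $\lambda = \tfrac{z+1-q}{qz}$ gives $\lambda = u^4$ (a direct computation), so $\sqrt{\lambda} = u^2$ and the framing factor $(\sqrt{\lambda})^e$ in $X$ becomes $(u^2)^e = u^{2e}$. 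Similarly $\Delta = -\tfrac{1-\lambda q}{\sqrt{\lambda}(1-q)} = -\tfrac{1-u^6}{u^2(1-u^2)} = -\tfrac{1+u^2+u^4}{u^2}$; however, since we are now imposing the Temperley--Lieb relation $\sigma_{i,i+1}=0$, the value of the ``loop'' coefficient is forced to be $-(u^2+u^{-2})$ rather than $\Delta$, and the correct normalisation constant multiplying $\mathrm{tr}$ must be $-\tfrac{1+u^2}{u} = -(u+u^{-1})$ per extra strand; this is exactly the factor appearing in $V^{\mathrm B}$. One checks $\left(-\tfrac{1+u^2}{u}\right) \mathrm{tr}(g_n) = \left(-\tfrac{1+u^2}{u}\right)z = \tfrac{1}{u^2(1)}\cdot(\text{scalar})$ so that the stabilisation moves are absorbed correctly.

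Then I would verify that $V^{\mathrm B}$ is invariant under all the moves of the ST-analogue of the Markov theorem (moves (i)--(iii) of Theorem~\ref{markov}): conjugation invariance is immediate from property (1) of the trace; invariance under the stabilisation move $\alpha \mapsto \alpha \sigma_n^{\pm 1}$ follows because the change $n \mapsto n+1$, $e \mapsto e \pm 1$ and $\mathrm{tr}(\overline\pi(\alpha)g_n^{\pm1})$ combine, using property (3) and the quadratic relation, so that the normalising powers of $-\tfrac{1+u^2}{u}$ and $u^2$ exactly cancel — this is the one spot where the specific value $z = -\tfrac{1}{u(1+u^2)}$ is essential, since for $g_n^{-1}$ one writes $g_n^{-1} = g_n - (u-u^{-1})$ (from the quadratic relation $\sigma_i^2 = (u-u^{-1})\sigma_i + 1$) and needs $\left(-\tfrac{1+u^2}{u}\right)\!\left(z - (u-u^{-1})\right) = u^{-2}$; invariance under loop conjugation $\alpha \mapsto t^{\pm1}\alpha t^{\mp1}$ again follows from trace property (1) together with the fact that $t$ itself is (a power of) a looping element and $e$ is unchanged. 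Finally I would appeal to Theorem~\ref{tr2} and the results of \cite{FG} for the statement that $V^{\mathrm B}$ is moreover the \emph{most generic} such invariant, i.e.\ it captures KBSM(ST): this is because $TL_{1,n}$ is precisely the algebra whose defining relations encode the Kauffman bracket skein relation, so any Kauffman-bracket-type invariant of links in ST factors through the trace on $\mathcal{TL}$, and the trace on $\mathcal{TL}$ is unique (inherited from the uniqueness in Theorem~\ref{tr}) once the single free parameter is fixed by Theorem~\ref{tr2}; the remaining parameter $v$ (equivalently the $s_k$) records the ST-direction and is exactly what is needed to match the rank of KBSM(ST).

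The main obstacle I anticipate is the bookkeeping in the stabilisation-move check with a \emph{negative} crossing: one must rewrite $g_n^{-1}$ via the quadratic relation, apply the trace, and confirm that the resulting scalar, after multiplication by the new normalising factor $-\tfrac{1+u^2}{u}$ and the framing correction $u^{-2}$, returns exactly the old value — this is where an error in the value of $z$ or $\lambda$ would surface. A secondary subtlety is making sure the factorisation of the trace through $TL_{1,n}$ is compatible with \emph{all} the looping generators $t'_i$ (not just $t = t'_0$), so that rule (4) still makes sense in the quotient; this follows because the ideal $\langle \sigma_{i,i+1}\rangle$ is generated by elements not involving $t$, and the inductive construction of $\mathrm{tr}$ in \cite{La2} only ever uses rule (4) on the top strand, which survives the quotient. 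Everything else is a routine translation of Theorem~\ref{tr} through the specialisation dictated by Theorem~\ref{tr2}.
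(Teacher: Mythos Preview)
Your approach is correct and matches the paper's: the paper itself gives no proof beyond the line ``Following \cite{FG}, we obtain the following result'' after recording that $z=-\tfrac{1}{u(1+u^2)}$ forces $\lambda=u^4$, so your plan (descend the trace via Theorem~\ref{tr2}, specialise the parameters, verify the Markov moves, and cite \cite{FG} for genericity) is exactly the intended argument, only spelled out in more detail.

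One small cleanup: your detour through the formula $\Delta=-\tfrac{1-\lambda q}{\sqrt{\lambda}(1-q)}$ is a red herring. That expression belongs to the presentation with quadratic relation $g_i^2=(q-1)g_i+q$, whereas after the rescaling $\sigma_i\mapsto u\sigma_i$, $q=u^2$ adopted in the paper (Eq.~(\ref{quad})) the trace parameter $z$ itself has been renormalised, so plugging the new $z$ into the old $\Delta$-formula is illegitimate. The clean way is exactly what you do afterwards: determine the normalising constant $C$ directly from the stabilisation requirement $C\cdot u^{2}\cdot z=1$, which immediately gives $C=-\tfrac{1+u^2}{u}$, and then your check of the negative stabilisation $\bigl(-\tfrac{1+u^2}{u}\bigr)u^{-2}\bigl(z-(u-u^{-1})\bigr)=1$ is correct and closes the loop. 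You can simply delete the paragraph about $\Delta=-\tfrac{1+u^2+u^4}{u^2}$ without loss.
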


\section{The Kauffman bracket skein module of ST via braids}\label{SMSST}


In the braid setting, the elements of the Turaev-basis of KBSM(ST) correspond bijectively to the elements of the following set:

\begin{equation}\label{Lpr}
{B}^{\prime}_{{\rm ST}}=\{ t{t^{\prime}_1} \ldots {t^{\prime}_n}, \ n\in \mathbb{N} \}.
\end{equation}

\noindent The set ${B}^{\prime}_{{\rm ST}}$ forms a basis of KBSM(ST) in terms of braids (for an illustration see Figure{allbases}). Note that ${B}^{\prime}_{{\rm ST}}$ is a subset of $\mathcal{H}$ and, in particular, ${B}^{\prime}_{{\rm ST}}$ is a subset of $\Sigma^{\prime}=\bigcup_n\Sigma^{\prime}_n$. Note also that in contrast to elements in $\Sigma^{\prime}$, the elements in ${B}^{\prime}_{{\rm ST}}$ have no gaps in the indices, the exponents are all equal to one and there are no `braiding tails'. 

\smallbreak

The invariant $V$ defined in Theorem~\ref{inva} recovers KBSM(ST). Indeed, it gives distinct values to distinct elements of ${B}^{\prime}_{{\rm ST}}$, since ${\rm tr}(t{t^{\prime}_1} \ldots {t^{\prime}_n})=s_{1}^n$. Hence, in order to compute the Kauffman bracket skein module of the lens spaces $L(p,q)$, it suffices to extend the (most generic) invariant $V$ for knots in $L(p,q)$ following the ideas in \cite{DL2, DL3, DL4}. That is, it suffices to solve the infinite system of Equations~(\ref{eqbbm}), $V_{\widehat{\alpha}}\ =\  V_{\widehat{bbm_{\pm}(\alpha)}}$, for all $\alpha$ in the basis of KBSM(ST). These equations have particularly simple formulations with the use of the new basis, ${B}_{{\rm ST}}$, for the Kauffman bracket skein module of ST first presented in \cite{D1}.

\bigbreak


We now present a different basis $B_{{\rm ST}}$ for the Kauffman bracket skein module of the solid torus, which is crucial toward the computation of ${\rm KBSM}\left(L(p,1)\right)$ and which is described in Eq.~(\ref{basis}) in open braid form. For an illustration see Figure~\ref{allbases}. In particular we have the following:

\begin{thm}\label{newbasis}
The following set is a basis for KBSM(ST):
\begin{equation}\label{basis}
B_{\rm ST}\ =\ \{t^{n},\ n \in \mathbb{N} \}.
\end{equation}
\end{thm}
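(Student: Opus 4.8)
The plan is to exhibit an explicit change of basis between the braided Turaev basis $B'_{\rm ST} = \{t\, t'_1 \cdots t'_n\}_{n\in\mathbb N}$, which is already known to be a basis of KBSM(ST) by Theorem~1 and the remarks following Eq.~(\ref{Lpr}), and the proposed set $B_{\rm ST} = \{t^n\}_{n\in\mathbb N}$. Since both sets are indexed by $\mathbb N$ (with $n$ strands/powers), it suffices to show that, inside the appropriate completion carrying the skein relations — i.e. in $TL_{1,n}$ modulo the relations that define KBSM(ST), equivalently working with the invariant $V$ of Theorem~\ref{inva} — the transition matrix expressing $\{t^n\}$ in terms of $\{t\,t'_1\cdots t'_n\}$ (and its braiding/trace reductions) is lower triangular with invertible diagonal entries. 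Concretely, I would first analyze $t^n$ as a mixed braid: $n$ parallel strands each wrapping once around the fixed strand $\widehat I$, i.e. the closure of $t^n \in B_{1,1}$ realized on $n$ moving strands, and then apply the Kauffman bracket skein relation repeatedly to resolve the crossings between these parallel copies.

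The key computational step is the following local reduction. Each $t^k$ viewed on a single moving strand is the ``$k$-times around'' curve; expanding one crossing of two adjacent such strands via $L_+ = A L_0 + A^{-1}L_\infty$ trades a diagram with two strands wrapping $k$ and $\ell$ times for a combination of a diagram where the strands merge into one wrapping $(k+\ell)$ times (the $L_0$ or $L_\infty$ smoothing, up to the unknot factor $-A^2-A^{-2}$) and lower-complexity diagrams. Iterating, one obtains an expansion
\[
t^n \;=\; \sum_{m=0}^{n} c_{n,m}\,\bigl(t\,t'_1\cdots t'_{m-1}\bigr),
\]
with $c_{n,n}$ a nonzero power of $A$ (equivalently of $u$, in the paper's conventions), since the top term comes from always choosing the smoothing that keeps the strands parallel and merely records the framing/crossing contributions. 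The lower-order coefficients $c_{n,m}$ for $m<n$ are Laurent polynomials in $A$ whose precise values are irrelevant for the argument: triangularity plus invertibility of the diagonal is all that is needed. I would verify $c_{n,n}\neq 0$ by tracking, at each crossing resolution, that exactly one monomial $A^{\pm 1}$ multiplies the ``parallel-preserving'' term, so that after resolving all $\binom{n}{2}$ (or fewer) crossings the leading coefficient is $\pm A^{k}$ for some integer $k$ — a unit in $R=\mathbb Z[A^{\pm1}]$.

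Given that expansion, the transition matrix from $B'_{\rm ST}$ to $B_{\rm ST}$ (truncated to the first $N$ generators, for every $N$) is lower triangular with units on the diagonal, hence invertible over $R$; therefore $B_{\rm ST}$ spans KBSM(ST). Linear independence follows immediately because $B'_{\rm ST}$ is a basis and an invertible transformation of a basis is a basis; alternatively, one can invoke the invariant $V$ directly, noting that ${\rm tr}$ distinguishes the $t^n$ since ${\rm tr}(t^n)$ involves a genuinely new trace parameter (the ``$s_k$'' data repackaged as the $t^n$-values) for each $n$, so no nontrivial $R$-linear combination of the $t^n$ can vanish. I expect the main obstacle to be the bookkeeping in the local skein expansion: one must be careful that resolving a crossing between the $j$-th and $(j{+}1)$-st copies of the longitude, which wind around $\widehat I$, really does produce only the merged curve plus strictly lower terms in the natural complexity order (number of moving strands, then total winding), and that no ``braiding tails'' or index gaps of the sort mentioned after Eq.~(\ref{Lpr}) survive in the leading term; handling this cleanly is exactly where the unoriented-braid formalism and the ordering relation from \cite{DL2} referenced in the text do the work.
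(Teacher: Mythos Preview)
Your overall strategy---an infinite lower--triangular change of basis with invertible diagonal between $B'_{\rm ST}$ and $B_{\rm ST}$---is exactly the paper's strategy, and your alternative linear-independence argument via ${\rm tr}(t^n)=s_n$ is valid. However, the heart of your argument contains a genuine confusion that makes the proposal, as written, not a proof.

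You identify the closure of $t^n\in B_{1,1}$ with ``$n$ parallel strands each wrapping once around $\widehat I$''. These are \emph{different links}. The closure $\widehat{t^n}$ is a \emph{single} component that winds $n$ times around the core; the link consisting of $n$ parallel longitudes is precisely the Turaev element $\widehat{t\,t'_1\cdots t'_{n-1}}$, an $n$-component link. Your ``local reduction''---resolving a crossing between two strands wrapping $k$ and $\ell$ times to get one strand wrapping $k+\ell$ times plus lower terms---is correct and is essentially the paper's Lemma~\ref{lemst}, but it operates naturally on the Turaev element (which, written as a mixed braid via the $t'_i$, \emph{does} have crossings among moving strands) and produces monomials $t^k$, not the other way around. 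Applied to $t^n$ as you describe it, there is nothing to resolve: as an element of $B_{1,1}$ the single moving strand has no self-crossings. So your mechanism actually computes the paper's direction $\tau'_{0,n}\mapsto \sum a_i\,t^i$, not the expansion $t^n=\sum c_{n,m}\,\tau'_{0,m-1}$ you claim.

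There is a second gap you do not anticipate. When one carries out the reduction $\tau'_{0,n}\mapsto \sum a_i\,t^i$ via Lemma~\ref{lemst}, the exponents $i$ that appear are \emph{not} all nonnegative: one lands in the augmented set $B_{\rm ST}^{\rm aug}=\{t^n:n\in\mathbb Z\}$ (see Lemma~\ref{fl1} and Example~\ref{exa1}). A substantial part of the paper's proof (\S\ref{ne}, Lemma~\ref{fl2}, Proposition~\ref{pr1}) is devoted to re-expressing each $t^{-n}$ as an $R$-combination of $t^i$ with $i\ge 0$, and this is precisely where the unoriented-braid formalism is used. Without this step the triangularity you assert relates $B'_{\rm ST}$ only to $B_{\rm ST}^{\rm aug}$, not to $B_{\rm ST}$, and the argument does not close. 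Your final paragraph gestures at ``bookkeeping'' difficulties but does not identify this issue; it is not bookkeeping but an honest extra argument.
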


\begin{figure}
\begin{center}
\includegraphics[width=5in]{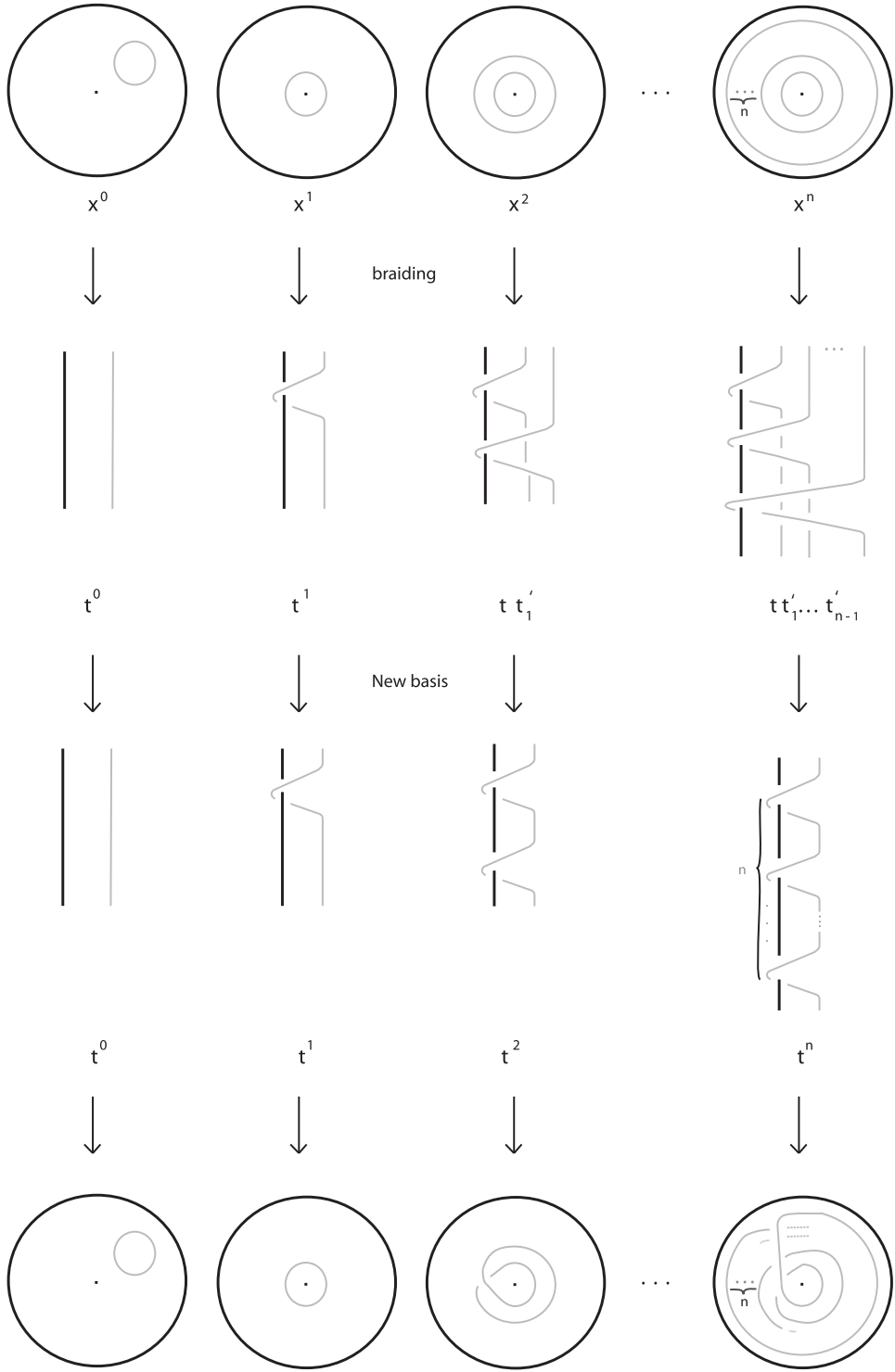}
\end{center}
\caption{Elements in the different bases of KBSM(ST).}
\label{allbases}
\end{figure}

The importance of the new basis $B_{\rm ST}$ of KBSM(ST) lies in the simplicity of the algebraic expression of a braid band move, which extends the link isotopy in ST to link isotopy in $L(p, q)$ (recall Theorem~1(iv)). This was our motivation for establishing this new basis ${B}_{\rm ST}$. Note also that in \cite{D2} the same basis was derived using the Temperley-Lieb algebra of type B. In this section we present a new and different proof for Theorem~\ref{newbasis}. The results presented here will also be used for the computation of KBSM($L(p,1)$) in \S~\ref{kblens}. The method for proving Theorem~\ref{newbasis} is the following:

\smallbreak

\begin{itemize}
\item[$\bullet$] we first recall the total ordering defined in \cite{DL2} for elements in the sets $\Sigma^{\prime}$ and $\Sigma$,
\smallbreak
\item[$\bullet$] we then pass from the set $B^{\prime}_{{\rm ST}}$ consisting of monomials of the form $tt_1^{\prime}t_2^{\prime}\ldots t_n^{\prime},\ n\in \mathbb{N}$, to an augmented set ${B_{{\rm ST}}}^{aug}$, consisting of monomials of the form $t^n$, where $n\in \mathbb{Z}$.
\smallbreak
\item[$\bullet$] We express elements in ${B_{{\rm ST}}}^{aug} \backslash B_{{\rm ST}}$ to sums of elements in $B_{{\rm ST}}$.
\smallbreak
\item[$\bullet$] The two (ordered) sets $B^{\prime}_{\rm ST}$ and $B_{\rm ST}$ are then related via a lower triangular infinite matrix with invertible elements on the diagonal, and
\smallbreak
\item[$\bullet$] we conclude that the set $B_{\rm ST}$ forms a basis of KBSM(ST).
\end{itemize}

Before we proceed with the proof of Theorem~\ref{newbasis}, we introduce the notion of unoriented braids, which is crucial for working with unoriented knots on the ``braid'' level.

\subsection{Unoriented braids}\label{unbr1}

The Kauffman bracket polynomial is defined for unoriented knots, but when working on the braid level, there is a natural orientation from top to bottom. Applying the Kauffman bracket skein relation on a crossing from a standard braid results in merging strands with opposite orientations (see Figure~\ref{uno1}).

\begin{figure}[H]
\begin{center}
\includegraphics[width=2.3in]{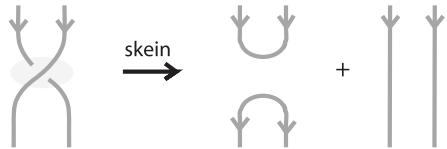}
\end{center}
\caption{ The skein relation on the braid level.}
\label{uno1}
\end{figure}

For this reason, we introduce the notion of {\it unoriented braids} as follows:

\begin{defn}\rm
An {\it unoriented braid} is defined as a classical braid by ignoring the natural top to bottom orientation. 
\end{defn} 

\begin{remark}\rm
The concept of unoriented braids is new and seems promising in serving as a tool for studying knots in c.c.o. 3-manifolds and fore deriving invariants for 3-manifolds in general. For a thorough study of unoriented braids see \cite{DL5}.
\end{remark}

The significance of unoriented braids for the Kauffman bracket skein module of the solid torus (and for Kauffman bracket skein modules in general), is illustrated in Figure~\ref{unb}, where the looping generators $t$ and $t^{-1}$ are shown to be equivalent in KBSM(ST) (compare also with Figure~\ref{nexp}).

\begin{figure}[H]
\begin{center}
\includegraphics[width=3.6in]{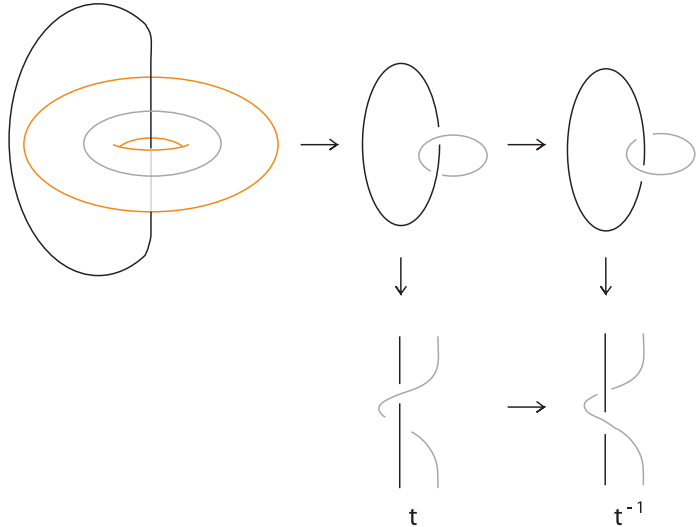}
\end{center}
\caption{Unoriented braids and KBSM(ST).}
\label{unb}
\end{figure}

\smallbreak

From now on, by braids we shall mean unoriented braids unless otherwise stated.

\subsection{An ordering relation}\label{ore}

We now present an ordering relation defined on $\Sigma_n^{\prime}$, $B^{\prime}_{{\rm ST}}$ and $\Sigma_n$. We first introduce the notion of the index of a word in $B^{\prime}_{{\rm ST}}$, in $\Sigma_n^{\prime}$ and in $\Sigma_n$.

\begin{defn} \label{index} \rm
The {\it index} of a word $\tau$ in $B^{\prime}_{{\rm ST}}$ (or in $\Sigma_n^{\prime}$ or in $\Sigma_n$), denoted $ind(\tau)$, is defined to be the highest index of the $t_i^{\prime}$'s (respectively of the $t_i$'s), in $\tau$. Similarly, the \textit{index} of an element in $\Sigma_n^{\prime}$ or in $\Sigma_n$ is defined in the same way by ignoring possible gaps in the indices of the looping generators and by ignoring the braiding part in $\textrm{H}_{n}$. Moreover, the index of a monomial in $\textrm{H}_{n}$ and in $TL_{1, n}$ is equal to $0$.
\end{defn}

\noindent For example, $ind({t^{\prime}}^{k_0}{t^{\prime}_1}^{k_1}\ldots {t^{\prime}_n}^{k_n})\, =\, ind(t^{u_0}\ldots t_n^{u_n})\, =\, n$.

\bigbreak

We now proceed with presenting an ordering relation in the sets $\Sigma$ and $\Sigma^{\prime}$, which passes to their respective subsets $B_{\rm ST}$ and $B_{\rm ST}^{\prime}$.

\begin{defn}{\cite[Definition~2]{DL2}} \label{order}\rm
Let $w={t^{\prime}_{i_1}}^{k_1}\ldots {t^{\prime}_{i_{\mu}}}^{k_{\mu}}\cdot \beta_1$ and $u={t^{\prime}_{j_1}}^{\lambda_1}\ldots {t^{\prime}_{j_{\nu}}}^{\lambda_{\nu}}\cdot \beta_2$ in $\Sigma^{\prime}$, where $k_t , \lambda_s \in \mathbb{Z}$ for all $t,s$ and $\beta_1, \beta_2$ are standard braid words. Then, we define the following ordering in $\Sigma^{\prime}$:

\smallbreak

\begin{itemize}
\item[(a)] If $\sum_{i=0}^{\mu}k_i < \sum_{i=0}^{\nu}\lambda_i$, then $w<u$.

\vspace{.1in}

\item[(b)] If $\sum_{i=0}^{\mu}k_i = \sum_{i=0}^{\nu}\lambda_i$, then:

\vspace{.1in}

\noindent  (i) if $ind(w)<ind(u)$, then $w<u$,

\vspace{.1in}

\noindent  (ii) if $ind(w)=ind(u)$, then:

\vspace{.1in}

\noindent \ \ \ \ ($\alpha$) if $i_1=j_1, \ldots , i_{s-1}=j_{s-1}, i_{s}<j_{s}$, then $w>u$,

\vspace{.1in}

\noindent \ \ \  ($\beta$) if $i_t=j_t$ for all $t$ and $k_{\mu}=\lambda_{\mu}, k_{\mu-1}=\lambda_{\mu-1}, \ldots, k_{i+1}=\lambda_{i+1}, |k_i|<|\lambda_i|$, then $w<u$,

\vspace{.1in}

\noindent \ \ \  ($\gamma$) if $i_t=j_t$ for all $t$ and $k_{\mu}=\lambda_{\mu}, k_{\mu-1}=\lambda_{\mu-1}, \ldots, k_{i+1}=\lambda_{i+1}, |k_i|=|\lambda_i|$ and $k_i>\lambda_i$, then $w<u$,

\vspace{.1in}

\noindent \ \ \ \ ($\delta$) if $i_t=j_t\ \forall t$ and $k_i=\lambda_i$, $\forall i$, then $w=u$.

\end{itemize}

The ordering in the set $\Sigma$ is defined as in $\Sigma^{\prime}$, where $t_i^{\prime}$'s are replaced by $t_i$'s.
\end{defn}

\begin{defn} \label{aug}
\rm
We define the augmented sets $B_{{\rm ST}}^{aug}$ and ${B^{\prime}_{{\rm ST}}}^{aug}$ as follows:
\[
\begin{array}{lcl}
{B_{{\rm ST}}}^{aug} & := & \{t^n,\, n\in \mathbb{Z} \},\\
&&\\
{B^{\prime}_{{\rm ST}}}^{aug} & := & \{t^{k_0}{t^{\prime}_1}^{k_1}\ldots {t^{\prime}_{m}}^{k_m},\, k_{i}\in \mathbb{Z},\, \forall i \}.
\end{array}
\]
\end{defn}

\begin{prop}
The sets ${B_{{\rm ST}}}$, ${B_{{\rm ST}}}^{aug}$, ${B_{{\rm ST}}^{\prime}}$ and ${B_{{\rm ST}}^{\prime}}^{aug}$ equipped with the ordering relation of Definition~\ref{order}, are totally ordered sets. Moreover, the sets ${B_{{\rm ST}}}$ and ${B_{{\rm ST}}^{\prime}}$ are well-ordered sets.
\end{prop}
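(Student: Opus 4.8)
The plan is to verify, in turn, the three assertions packaged in the proposition: that the ordering of Definition~\ref{order} restricts to a total order on each of the four sets, and that on the two non-augmented sets it is in fact a well-order. Since $B_{\rm ST}\subset B_{\rm ST}^{aug}$ and $B_{\rm ST}^{\prime}\subset {B_{\rm ST}^{\prime}}^{aug}$, and since any subset of a totally (resp. well-) ordered set inherits the property, it actually suffices to prove that the two augmented sets are totally ordered and that the two non-augmented sets are well-ordered; the restriction to $B_{\rm ST}$ and $B_{\rm ST}^{\prime}$ being total then comes for free. So there are really only the four items to check: totality on ${B_{\rm ST}}^{aug}$, totality on ${B_{\rm ST}^{\prime}}^{aug}$, well-foundedness on $B_{\rm ST}$, and well-foundedness on $B_{\rm ST}^{\prime}$.

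For totality one must check that the relation of Definition~\ref{order} is (i) irreflexive-or-equal and antisymmetric, (ii) transitive, and (iii) that any two elements are comparable. The cleanest route is to exhibit the ordering as the lexicographic order induced by an explicit order-embedding into a standard totally ordered set. For $w={t^{\prime}_{i_1}}^{k_1}\ldots{t^{\prime}_{i_\mu}}^{k_\mu}\cdot\beta_1$ in ${B_{\rm ST}^{\prime}}^{aug}$ (where, for elements of $B_{\rm ST}^{\prime}$ proper and of $B_{\rm ST}^{aug}$, the braiding tail $\beta_1$ is trivial, the exponents have a fixed sign pattern, and the indices $i_1<\dots<i_\mu$ have no gaps), I would assign the tuple
\[
\Phi(w)=\Bigl(\textstyle\sum_j k_j,\ \mathrm{ind}(w),\ \varepsilon(w)\Bigr),
\]
where the third coordinate $\varepsilon(w)$ is the finite sequence recording, in the order dictated by cases ($\alpha$)--($\delta$), first the comparison of the index sequences $(i_s)$ and then, when those agree, the reversed exponent data $(\dots,k_{i+1},k_i,\dots)$ compared first by absolute value and then by sign with the stated conventions. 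One then checks that $w<u$ in the sense of Definition~\ref{order} iff $\Phi(w)<\Phi(u)$ in the lexicographic order on $\mathbb{Z}\times\mathbb{N}\times(\text{a suitable totally ordered alphabet})$, and that $\Phi$ is injective on each of the four sets (this is exactly case ($\delta$)). Totality and antisymmetry are then automatic; transitivity follows from transitivity of the lexicographic order. The only real care needed is in checking that $\varepsilon(\cdot)$ lands in a genuinely totally ordered set — i.e. that clauses ($\alpha$), ($\beta$), ($\gamma$) are mutually exclusive and exhaustive once $\sum k_j$ and $\mathrm{ind}$ are fixed — and that the reversal of the exponent indexing is handled consistently; this is the step I expect to be the main bookkeeping obstacle, though it is not deep.

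For the well-ordering claims, I would argue by way of the embedding $\Phi$ again, now restricted to $B_{\rm ST}$ and $B_{\rm ST}^{\prime}$ where the structure is drastically simpler. On $B_{\rm ST}=\{t^n:n\in\mathbb{N}\}$ the first coordinate of $\Phi$ is just $n\in\mathbb{N}$, the index is $0$, and $\varepsilon$ is empty, so $\Phi$ is an order isomorphism onto $\mathbb{N}$ with its usual well-order — done. On $B_{\rm ST}^{\prime}=\{tt_1^{\prime}\cdots t_n^{\prime}:n\in\mathbb{N}\}$, all exponents are $1$ and there are no gaps, so $\sum k_j=n+1$ and $\mathrm{ind}=n$ both increase strictly with $n$ while $\varepsilon$ is again constant; hence $\Phi$ is an order isomorphism onto $\{1,2,3,\dots\}\subset\mathbb{N}$, again well-ordered. (Note one should remark why the augmented sets are \emph{not} well-ordered and so the proposition correctly omits them: in ${B_{\rm ST}}^{aug}$ the negative powers $t^{-1}>t^{-2}>\cdots$ form an infinite strictly descending chain.) Finally I would record that since $B_{\rm ST}$ and $B_{\rm ST}^{\prime}$ are well-ordered they are in particular totally ordered, completing all assertions of the proposition.
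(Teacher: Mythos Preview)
Your proposal is correct but takes a genuinely different route from the paper. The paper's proof is much shorter: for totality it simply cites \cite[Proposition~1]{DL2}, where it is shown that the ambient sets $\Sigma_n$ and $\Sigma_n^{\prime}$ are totally ordered under Definition~\ref{order}, and then observes that all four sets under consideration are subsets and hence inherit the total order. For well-ordering it only remarks that $t^0$ is the minimum element of $B_{\rm ST}$ and of $B_{\rm ST}^{\prime}$. Your argument, by contrast, is self-contained: you construct an explicit order-embedding $\Phi$ into a lexicographic product and read off totality, antisymmetry and transitivity from standard properties of lex orders, and for the well-ordering you exhibit order-isomorphisms of $B_{\rm ST}$ and $B_{\rm ST}^{\prime}$ with $\mathbb{N}$. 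What your approach buys is independence from the external reference and, in fact, a more rigorous well-ordering argument (the existence of a minimum element does not by itself imply well-ordering; your identification with $\mathbb{N}$ is what is actually needed). What the paper's approach buys is brevity, at the cost of leaning on \cite{DL2}.
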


\begin{proof}
In \cite{DL2}, Proposition~1, it is shown that the sets ${\Sigma_n}^{\prime}$ and ${\Sigma_n}$ equipped with the ordering relation of Definition~\ref{order}, are totally ordered sets. Since ${B_{{\rm ST}}^{\prime}}$ and ${B_{{\rm ST}}^{\prime}}^{aug}$ are subsets of ${\Sigma_n}^{\prime}$, and since ${B_{{\rm ST}}}$ and ${B_{{\rm ST}}}^{aug}$ are subsets of ${\Sigma_n}$, the sets ${B_{{\rm ST}}}$, ${B_{{\rm ST}}}^{aug}$, ${B_{{\rm ST}}^{\prime}}$ and ${B_{{\rm ST}}^{\prime}}^{aug}$ inherit the property of being totally ordered sets from ${\Sigma_n}$ (cor. $\Sigma_n^{\prime}$).
\smallbreak
Moreover, $t^0$ is the minimum element of both ${B_{{\rm ST}}}$ and ${B_{{\rm ST}}^{\prime}}$ and thus, these sets are well-ordered sets.
\end{proof}

\subsection{Useful Lemmata}\label{implem}

In this subsection we prove a series of results in order to convert elements in $B^{\prime}_{\rm ST}$ to elements in $B_{\rm ST}$. We will use the symbol $\widehat{\cong}$ when conjugation and stabilization moves are performed, and $\widehat{\underset{{\rm skein}}{\cong}}$ when both conjugation, stabilization moves and the Kauffman bracket skein relation is performed. Note that in order to simplify our results, we will omit the coefficients involved. Finally, it is worth mentioning that the $t_i^{\prime}$'s are conjugates, i.e. $t_i^{\prime}\, t_j^{\prime}\, \widehat{\cong}\,  t_j^{\prime}\,  t_i^{\prime}$ for all $i \neq j$. A general case is illustrated in Figure~\ref{con}, for $w\in {B^{\prime}_{\rm ST}}^{aug}$.

\begin{figure}
\begin{center}
\includegraphics[width=4.5in]{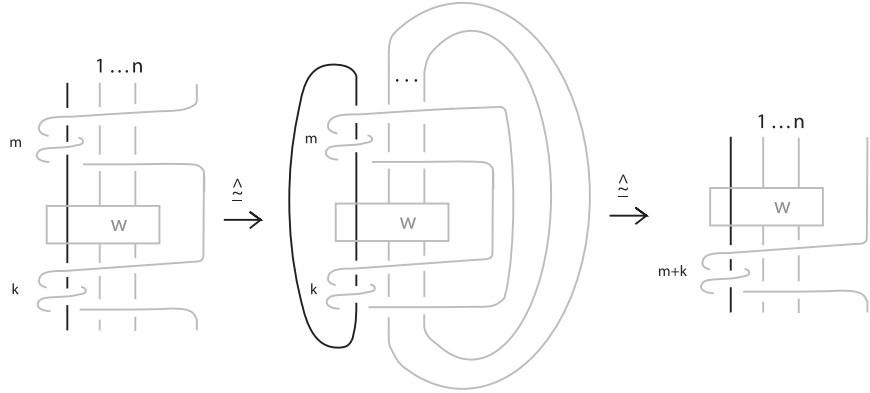}
\end{center}
\caption{Conjugation on the $t_i^{\prime}$'s.}
\label{con}
\end{figure}

\smallbreak

In order to simplify the algebraic expressions obtained throughout this procedure and throughout the paper in general, we first introduce the following notation:

\begin{nt}\label{nt} \rm
We set ${\tau^{\prime}_{i,i+m}}^{k_{i,i+m}}:={t^{\prime}}_i^{k_i}\ldots {t^{\prime}}^{k_{i+m}}_{i+m}\in {B^{\prime}_{{\rm ST}}}^{aug}$, for $m\in \mathbb{N}$, $k_j\neq 0$ for all $j$.
\end{nt}

\begin{lemma}\label{lemst}
Let $w\in {B^{\prime}_{\rm ST}}^{aug}$ such that $ind(w)=n-1$. Then, for $k, m\in \mathbb{Z}$ the following relations hold in KBSM(ST):
\[
w\, {t_{n}^{\prime}}^k\, {t_{n+1}^{\prime}}^m\ \widehat{\underset{{\rm skein}}{\cong}} \ a\cdot w\, {t_{n}^{\prime}}^{k-m}\, +\, b\cdot w\, {t_{n}^{\prime}}^{k+m},
\]
\noindent where $a, b$ coefficients.
\end{lemma}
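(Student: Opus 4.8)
The plan is to work on the braid/diagrammatic level and reduce the two looping generators with the highest indices, $t_n^\prime$ and $t_{n+1}^\prime$, to a single looping generator $t_n^\prime$ carrying an adjusted exponent. First I would recall the geometric picture of $t_{n+1}^\prime = g_{n+1}\cdots g_1 t g_1^{-1}\cdots g_{n+1}^{-1}$ (Figure~\ref{genh}(ii)): it is a loop around the fixed strand whose ``tail'' runs alongside strand $n+1$. Since $w$ has index $n-1$, the only strands of the moving part that interact nontrivially with $t_n^\prime$ and $t_{n+1}^\prime$ are strands $n$ and $n+1$, so I can localize the computation to a two-strand region and treat $w$ as inert. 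The configuration ${t_n^\prime}^k{t_{n+1}^\prime}^m$ then presents two concentric loops around $\widehat I$, one winding with ``strand $n$'' and one with ``strand $n+1$'', joined through a crossing (namely $g_n^{\pm1}$) coming from the definition of $t_{n+1}^\prime$ in terms of $t_n^\prime$: indeed $t_{n+1}^\prime = g_{n+1} t_n^\prime g_{n+1}^{-1}$ after the obvious rewriting, and $g_{n+1}$ can be slid down to become an isolated crossing between the two loop-tails.

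The key step is then to apply the Kauffman bracket skein relation (working with unoriented braids, as introduced in \S\ref{unbr1}) to that isolated crossing. Resolving $g_n$ via $\sigma_i \mapsto A\,(\text{identity}) + A^{-1}\,(\text{cup--cap})$ (in the normalization fixed by relation~(\ref{ideal}), i.e. $g_i = A\cdot 1 + A^{-1}e_i$ up to the change of variables $q=u^2$) splits the diagram into two terms. In the first term the two loop-tails run parallel in the \emph{same} rotational sense around $\widehat I$; merging them (a stabilization/destabilization along strand $n+1$, removing the now-trivial extra strand) yields a single loop of winding $k+m$, i.e. $w\,{t_n^\prime}^{k+m}$. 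In the second term the cup--cap forces the strand-$n$ loop and the strand-$(n+1)$ loop to be traversed in \emph{opposite} senses, so their windings around $\widehat I$ partially cancel; after the same stabilization this produces $w\,{t_n^\prime}^{k-m}$. Collecting the $A^{\pm 1}$ weights and the $(-A^2-A^{-2})$ factors from any resulting trivial circles gives the coefficients $a$ and $b$ (which I am instructed to leave unspecified). Conjugation invariance (move (i) of Theorem~\ref{markov}) and the fact that the $t_i^\prime$ are conjugate, together with Figure~\ref{unb} which makes $t$ and $t^{-1}$ equivalent in KBSM(ST), are what let me pass freely between $t_n^\prime$ with positive and negative exponents; this is also why the statement is symmetric and why only the sum-of-windings data survives, consistent with the ordering of Definition~\ref{order}(a)--(b).

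The main obstacle I anticipate is bookkeeping the winding/orientation data carefully enough to be sure the two skein-resolution terms really are ${t_n^\prime}^{k\pm m}$ and not, say, ${t_n^\prime}^{k}{t_n^\prime}^{\pm m}$ attached at the wrong strand or with a leftover braiding tail that is not absorbed into $w$. Concretely, after the skein resolution one term naturally lands in ${B^\prime_{\rm ST}}^{aug}$ only after one verifies that the merged tail can be isotoped to sit immediately to the right of $w$'s tails with no residual crossings with strands $1,\dots,n-1$; this uses exactly the ``no gaps in the indices / no braiding tails'' normalization of ${B^\prime_{\rm ST}}$ recorded after Eq.~(\ref{Lpr}) and an inductive use of the conjugation picture in Figure~\ref{con}. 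A secondary technical point is the case $k=m$ (resp.\ the degenerate exponent values), where ${t_n^\prime}^{0}=1$ has index $0$ and one of the two terms drops into $\mathrm{H}_n$ or even becomes a scalar multiple of a lower element of $B_{\rm ST}$; this is harmless for the statement but needs a remark so that the ``$\widehat{\underset{\rm skein}{\cong}}$'' really is an equality in KBSM(ST) rather than merely up to lower-order terms. Once these localization and normalization checks are in place, the lemma follows by a single application of the skein relation plus one (de)stabilization.
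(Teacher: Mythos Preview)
Your proposal is correct and follows essentially the same approach as the paper. The paper's own proof consists solely of a reference to Figure~\ref{lmst}, which depicts exactly the diagrammatic argument you describe: rewrite ${t_{n+1}^{\prime}}^m = g_{n+1}{t_n^{\prime}}^m g_{n+1}^{-1}$, localize to the two outermost moving strands (using that $ind(w)=n-1$ so $w$ is inert), apply the Kauffman bracket skein relation once to the isolated crossing, and observe that the two smoothings yield, after a (de)stabilization, a single loop of winding $k+m$ (parallel merge) and $k-m$ (cup--cap merge, reversing the sense of the $m$-loop). Your added remarks on the bookkeeping of residual braiding tails and on the degenerate case $k=m$ are more explicit than the paper, but the underlying mechanism is identical.
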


The proof of Lemma~\ref{lemst} is illustrated in Figure~\ref{lmst}.

\bigbreak

\begin{figure}
\begin{center}
\includegraphics[width=5.5in]{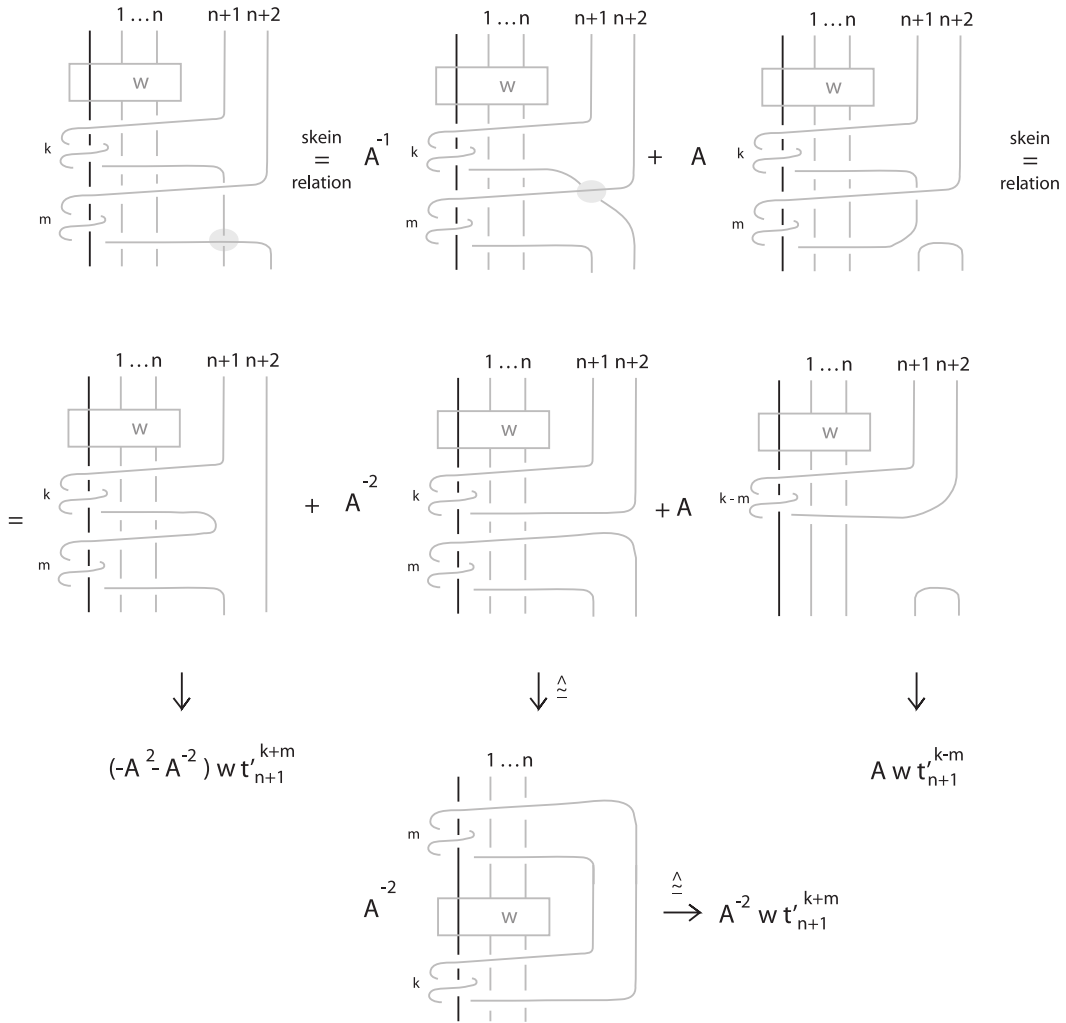}
\end{center}
\caption{The proof of Lemma~\ref{lemst}.}
\label{lmst}
\end{figure}

Note that when applying Lemma~\ref{lemst} on an element in $B^{\prime}_{\rm ST}$, we obtain an element in $B^{\prime}_{\rm ST}$ and an element in ${B^{\prime}_{\rm ST}}^{aug}$ of lower index than the initial element in $B^{\prime}_{\rm ST}$. Indeed we have:
\[
{\tau^{\prime}_{0, n}}\ =\ t_0\, t^{\prime}_1\, \ldots\, t^{\prime}_{n-1}\, t^{\prime}_n\, \overset{Lemma~\ref{lemst}}{\cong}\  t_0\, t^{\prime}_1\, \ldots\, t^{\prime}_{n-2}\ +\ t_0\, t^{\prime}_1\, \ldots\, {t^{\prime}_{n-1}}^2\ =\ {\tau^{\prime}_{0, n-2}}\ +\ {\tau^{\prime}_{0, n-2}}\, {t^{\prime}_{n-1}}^2.
\]

We now convert elements in ${B^{\prime}_{\rm ST}}^{aug}$ to sums of elements in $B_{\rm ST}^{aug}$. We have the following result:

\begin{lemma}\label{lconv}
The following relations hold in KBSM(ST):

\begin{equation*}\label{conv}
{\tau^{\prime}}^{k_{0, n}}_{0, n} \ \widehat{\underset{{\rm skein}}{\cong}} \ \underset{i\leq k}{\sum}\, a_i\, t^i,\\
\end{equation*}

\noindent where $k\, =\, \underset{i=0}{\overset{n}{\sum}}\, k_i$, $k_i\in \mathbb{N}$ and $a_i$ coefficients for all $i$.
\end{lemma}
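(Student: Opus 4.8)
The plan is to prove Lemma~\ref{lconv} by induction on the index $n$ of the monomial ${\tau^{\prime}}^{k_{0,n}}_{0,n}=t^{k_0}{t^{\prime}_1}^{k_1}\ldots {t^{\prime}_n}^{k_n}$, using Lemma~\ref{lemst} as the engine that lowers the index one step at a time while preserving (or lowering) the total exponent sum $k=\sum_{i=0}^n k_i$. For the base case $n=0$ there is nothing to prove: the element is already $t^{k_0}$, which lies in $B_{\rm ST}^{aug}$. Before running the induction, I would first reduce to the case where all exponents $k_i$ are nonnegative: this is exactly the content of the remark accompanying Figure~\ref{unb}, namely that $t$ and $t^{-1}$ are equivalent in KBSM(ST) when viewed as unoriented braids, so by conjugation (recall $t^{\prime}_i\,t^{\prime}_j\,\widehat{\cong}\,t^{\prime}_j\,t^{\prime}_i$) and this sign-flipping move one may absorb negative exponents and assume $k_i\in\mathbb{N}$, which is why the statement is phrased with $k_i\in\mathbb{N}$ and $a_i$ running over $i\le k$ with $k=\sum k_i\ge 0$.

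For the inductive step, assume the result for all elements of index $<n$ and let $w'={\tau^{\prime}}^{k_{0,n}}_{0,n}$ have index $n$. Write $w'=w\cdot {t^{\prime}_{n-1}}^{k_{n-1}}{t^{\prime}_n}^{k_n}$ where... more carefully, split off the last two looping factors: $w'=u\cdot {t^{\prime}_{n-1}}^{k}{t^{\prime}_n}^{m}$ with $u={\tau^{\prime}}^{k_{0,n-2}}_{0,n-2}$ of index $n-2$, $k=k_{n-1}$, $m=k_n$. If $m=0$ the index is already $\le n-1$ and we are done by the inductive hypothesis; otherwise apply Lemma~\ref{lemst} (with the roles of $n,n-1$ shifted appropriately, i.e. the version of the lemma that collides the $t^{\prime}_n$ factor into the $t^{\prime}_{n-1}$ slot) to get
\[
w'\ \widehat{\underset{{\rm skein}}{\cong}}\ a\cdot u\,{t^{\prime}_{n-1}}^{k-m}\ +\ b\cdot u\,{t^{\prime}_{n-1}}^{k+m}.
\]
Both summands on the right have index $\le n-1$. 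Each is again an element of ${B^{\prime}_{\rm ST}}^{aug}$ (after the sign-normalization above, or noting that Lemma~\ref{lemst} is stated for $k,m\in\mathbb{Z}$ so the intermediate exponents $k\pm m$ are harmless), so by the inductive hypothesis each is $\widehat{\underset{{\rm skein}}{\cong}}$ a sum of $t^i$'s, and collecting terms gives $w'\,\widehat{\underset{{\rm skein}}{\cong}}\,\sum a_i t^i$. It remains to check the bound $i\le k$: the total exponent sum of $u\,{t^{\prime}_{n-1}}^{k+m}$ equals $\sum_{j=0}^{n-2}k_j + k_{n-1}+k_n = k$, the original exponent sum, and the skein relation never increases this sum (each smoothing replaces a crossing either trivially or by a cap–cup which can only decrease the number of looping strands passing around the hole), so every $t^i$ produced has $i\le k$. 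Tracking this monotonicity of the exponent sum carefully through Lemma~\ref{lemst}'s two outputs ($k-m\le k+m$ when $m\ge 0$, and the $k+m$ branch matches the original sum exactly) is what makes the bound work.

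The main obstacle I anticipate is precisely the bookkeeping of the exponent-sum bound together with the sign normalization: one must make sure that passing negative exponents through the $t\leftrightarrow t^{-1}$ equivalence of Figure~\ref{unb} does not secretly inflate $k$, and that Lemma~\ref{lemst}, applied repeatedly, always feeds back a monomial in ${B^{\prime}_{\rm ST}}^{aug}$ of strictly smaller index so the induction genuinely terminates. A secondary point of care is that Lemma~\ref{lemst} as stated assumes $ind(w)=n-1$ for the \emph{leading} (largest-index) pair of factors; to apply it to an interior pair one first conjugates the $t^{\prime}_i$'s into the outermost positions using $t^{\prime}_i\,t^{\prime}_j\,\widehat{\cong}\,t^{\prime}_j\,t^{\prime}_i$ (Figure~\ref{con}), which is legitimate up to the conjugation/stabilization moves absorbed into $\widehat{\cong}$. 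Once these normalizations are in place the induction is routine, and the coefficients $a_i$ are whatever products of the coefficients $a,b$ from Lemma~\ref{lemst} and the Kauffman bracket loop value $(-A^2-A^{-2})$ accumulate along the way, which we are free to suppress as agreed in \S\ref{implem}.
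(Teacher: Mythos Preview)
Your proof is correct and takes essentially the same approach as the paper: apply Lemma~\ref{lemst} to the two highest-index looping factors to drop the index by one, then invoke the inductive hypothesis on the resulting lower-index monomials. The only cosmetic difference is that the paper phrases the induction as strong induction on the total order of Definition~\ref{order} (with base case $tt_1^{\prime}$), whereas you induct directly on the index $n$; your version is if anything cleaner, and your explicit tracking of the exponent-sum bound $i\le k$ and of the possibly negative intermediate exponent $k_{n-1}-k_n$ fills in details the paper leaves implicit.
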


\begin{proof}
We prove Lemma~\ref{lconv} by strong induction on the order of ${\tau^{\prime}}^{k_{0, n}}_{0, n}$.

\smallbreak

The base of the induction is the monomial $tt_1^{\prime}\in B^{\prime}_{\rm ST}$ of index $1$ and it is illustrated in Figure~\ref{bas1}.

\begin{figure}[H]
\begin{center}
\includegraphics[width=5.5in]{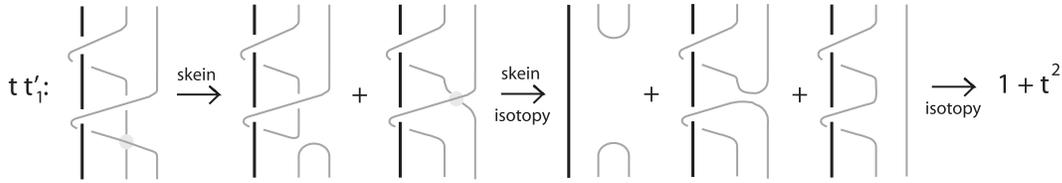}
\end{center}
\caption{The base of induction.}
\label{bas1}
\end{figure}

Assume now that Lemma~\ref{lconv} holds for all monomials of lower order than ${\tau^{\prime}}^{k_{0, n}}_{0, n}$. Then, we have:

\[
{\tau^{\prime}}^{k_{0, n}}_{0, n} \ = \ {\tau^{\prime}}^{k_{0, n-2}}_{0, n-2}\, \underline{{t^{\prime}_{n-1}}^{k_{n-1}}\, {t_{n}^{\prime}}^{k_n}} \ \overset{{\rm Lemma}~\ref{lemst}}{\cong}\ {\tau^{\prime}}^{k_{0, n-2}}_{0, n-2}\, {t_{n-1}^{\prime}}^{k_{n-1}-k_n}\, +\, {\tau^{\prime}}^{k_{0, n-2}}_{0, n-2}\, {t_{n-1}^{\prime}}^{k_{n-1}+k_n}.
\]

According to the ordering relation, on the right hand side of this equation we have two monomials of index $n-1<n=ind({\tau^{\prime}}^{k_{0, n}}_{0, n})$. Thus, these monomials are of lower order than ${\tau^{\prime}}^{k_{0, n}}_{0, n}$. By the induction hypothesis the proof is concluded.
\end{proof}

\begin{remark}\rm
Figure~\ref{lmst} suggests that Lemma~\ref{lconv} is true for $k_i\in \mathbb{Z}$, that is:

\begin{equation}\label{conv1}
{\tau^{\prime}}^{k_{0, n}}_{0, n} \ \widehat{\underset{{\rm skein}}{\cong}} \ \underset{i\leq k}{\sum}\, a_i\, t^i,
\end{equation}
\noindent where $k\, =\, \underset{i=0}{\overset{n}{\sum}}\, |\, k_i\, | $, $k_i\in \mathbb{Z}$ and $a_i$ coefficients for all $i$.
\end{remark}

Applying now Lemma~\ref{conv} (together with Equations~\ref{conv1} when needed) on elements in $B^{\prime}_{{\rm ST}}$ leads to the following result:

\begin{cor}\label{cor1}
Elements in $B^{\prime}_{{\rm ST}}$ can be written as sums of elements in ${B_{{\rm ST}}}^{aug}$. In particular: 
\begin{equation}\label{tprt}
\tau^{\prime}_{0, n}\ \widehat{\underset{{\rm skein}}{\cong}}\ \underset{i\leq n+1}{\sum}\, a_i\, t^i,
\end{equation}
\noindent where $a_i$ coefficients for all $i$. Equivalently, the set ${B_{{\rm ST}}}^{aug}$ spans KBSM(ST).
\end{cor}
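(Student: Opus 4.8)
The plan is to derive the statement directly from Lemma~\ref{lconv} (together with its extension, Equation~(\ref{conv1})) by applying it to the specific monomials $\tau^{\prime}_{0,n}=t_0\,t^{\prime}_1\,\ldots\,t^{\prime}_n$ that constitute the basis $B^{\prime}_{\rm ST}$. First I would observe that each such $\tau^{\prime}_{0,n}$ is precisely a monomial ${\tau^{\prime}}^{k_{0,n}}_{0,n}$ in ${B^{\prime}_{\rm ST}}^{aug}$ with all exponents $k_i=1$, so Lemma~\ref{lconv} applies verbatim with $k=\sum_{i=0}^{n}k_i=n+1$. This immediately yields
\[
\tau^{\prime}_{0,n}\ \widehat{\underset{{\rm skein}}{\cong}}\ \underset{i\leq n+1}{\sum}\, a_i\, t^i
\]
for suitable coefficients $a_i$, which is exactly Equation~(\ref{tprt}). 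Since $B^{\prime}_{\rm ST}=\{t\,t^{\prime}_1\ldots t^{\prime}_n,\ n\in\mathbb{N}\}$ is a basis of KBSM(ST) by the discussion following Equation~(\ref{Lpr}) (it is the braided form of the Turaev basis), and every one of its elements is expressed as a finite $R$-linear combination of elements $t^i\in{B_{\rm ST}}^{aug}$, it follows that ${B_{\rm ST}}^{aug}$ spans KBSM(ST).

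The key steps, in order, are: (1) identify $\tau^{\prime}_{0,n}$ as the all-exponents-one element of ${B^{\prime}_{\rm ST}}^{aug}$; (2) invoke Lemma~\ref{lconv} to rewrite it as a sum $\sum_{i\leq n+1}a_i t^i$, noting that the index bound $n+1$ comes from the exponent sum; (3) recall that $B^{\prime}_{\rm ST}$ spans (indeed freely generates) KBSM(ST); (4) conclude that the image of a spanning set lies in the $R$-span of ${B_{\rm ST}}^{aug}$, hence ${B_{\rm ST}}^{aug}$ spans KBSM(ST). One should also remark that the rewriting in step (2) is carried out inside KBSM(ST) — i.e. modulo the Kauffman bracket skein relations together with conjugation and stabilization, as encoded in the symbol $\widehat{\underset{{\rm skein}}{\cong}}$ — so the resulting identities are genuine equalities of skein classes, not merely of braids.

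There is essentially no new obstacle here: the entire content has been pushed into Lemma~\ref{lconv} (and the inductive argument already given for it, which is driven by the well-ordering of ${B^{\prime}_{\rm ST}}^{aug}$ from Definition~\ref{order}). The only mild point requiring care is bookkeeping of the index: one must check that applying Lemma~\ref{lemst} repeatedly to $\tau^{\prime}_{0,n}$ never produces a term $t^i$ with $i>n+1$, which holds because each application of Lemma~\ref{lemst} replaces a pair of exponents $(k,m)$ by either $k-m$ or $k+m$ and thus, in absolute value, never increases the total exponent sum — so the bound $i\leq n+1$ propagates through the induction. With that remark in place, the corollary follows. Note that this establishes only that ${B_{\rm ST}}^{aug}$ \emph{spans} KBSM(ST); reducing further to the set $B_{\rm ST}=\{t^n,\ n\in\mathbb{N}\}$ and proving linear independence (via the triangular change-of-basis matrix with $B^{\prime}_{\rm ST}$) is the business of the remaining steps outlined before Theorem~\ref{newbasis}, and is not needed for this corollary.
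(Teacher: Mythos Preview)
Your proposal is correct and follows essentially the same route as the paper: the corollary is obtained by specializing Lemma~\ref{lconv} to the monomials $\tau^{\prime}_{0,n}\in B^{\prime}_{\rm ST}$ with all exponents equal to $1$, so that $k=\sum_{i=0}^{n}k_i=n+1$, and then using that $B^{\prime}_{\rm ST}$ is a basis of KBSM(ST). Your additional remark on why the bound $i\leq n+1$ is preserved under repeated applications of Lemma~\ref{lemst} is a helpful clarification but not a departure from the paper's argument.
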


In the following example, we demonstrate in detail all calculations involved when applying Lemma~\ref{conv} and Equations~\ref{conv1} to elements in $B^{\prime}_{{\rm ST}}$.

\begin{ex}\label{exa1}\rm
\[
\begin{array}{rclclcl}
t\, t_1^{\prime} & \widehat{\underset{{\rm skein}}{\cong}} & 1\, +\, t^2 &&&&\\
&&&&&&\\
t\, t_1^{\prime}\, t_2^{\prime} & \widehat{\underset{{\rm skein}}{\cong}} & t\, {t^{\prime}_1}^2\, +\, t & \widehat{\underset{{\rm skein}}{\cong}} & t^{-1}\, +\, t\, +\, t^3 &&\\
&&&&&&\\
t\, t_1^{\prime}\, t_2^{\prime}\, t_3^{\prime} & \widehat{\underset{{\rm skein}}{\cong}} & t\, t^{\prime}_1\, {t^{\prime}_2}^2 \, +\, t\, t_1^{\prime} & \widehat{\underset{{\rm skein}}{\cong}} & t\, {t_1^{\prime}}^3\, +\, t\, {t_1^{\prime}}^{-1}\, +\, t\, t_1^{\prime} & \widehat{\underset{{\rm skein}}{\cong}} & t^{-2}\, +\, 1\, +\, t^2\, +\, t^4 \\
&&&&&&\\
t\, t_1^{\prime}\, t_2^{\prime}\, t_3^{\prime}\, t_4^{\prime} & \widehat{\underset{{\rm skein}}{\cong}} & t\, t^{\prime}_1\, {t^{\prime}_2}\, {t^{\prime}_3}^2 \, +\, t\, t_1^{\prime}\, {t^{\prime}_2} & \widehat{\underset{{\rm skein}}{\cong}} & t\, t_1^{\prime}\,  {t_2^{\prime}}^3\, +\, t\, {t_1^{\prime}}\, {t_2^{\prime}}^{-1} & + & t\, t_1^{\prime}\, t_2^{\prime}\\
&&&&&&\\
& \widehat{\underset{{\rm skein}}{\cong}} & t\, {t_1^{\prime}}^{4}\, +\, t\, {t_1^{\prime}}^{-2} & + & t\, +\, t\, {t_1^{\prime}}^2 \, + \, t\, t_1^{\prime}\, t_2^{\prime} & \\
&&&&&&\\
& \widehat{\underset{{\rm skein}}{\cong}} &  t^{-3}\, +\, t^{-1} & + & t\, +\, t^3 & + & t^5\\
\end{array}
\]
\end{ex}

Following the calculations in Example~\ref{exa1}, one may notice that there are closed formulas for Equations~(\ref{tprt}). More precisely, simple calculations lead to the following result:

\begin{lemma}\label{fl1}
For $n\in \mathbb{N}$, the following relations hold in KBSM(ST):
\[
\tau_{0, n}^{\prime} \ \widehat{\underset{{\rm skein}}{\cong}}
\begin{cases}   
\underset{i=0}{\overset{(n+1)/2}{\sum}}\, a_i\, t^{2i} \, + \, \underset{i=1}{\overset{(n-1)/2}{\sum}}\, b_i\, t^{-2i} &, {\rm for}\ n\ {\rm odd}\\
&\\
\underset{i=0}{\overset{n/2}{\sum}}\, c_i\, t^{2i+1} \, + \, \underset{i=1}{\overset{n/2}{\sum}}\, d_i\, t^{-2i+1} &, {\rm for}\ n\ {\rm even}
\end{cases},
\]
\noindent where $a_i, b_i, c_i, d_i$ are coefficients.
\end{lemma}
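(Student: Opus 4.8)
The plan is to argue by induction on $n$, using the recursive relation $\tau_{0,n}^{\prime}\,\widehat{\underset{{\rm skein}}{\cong}}\,\tau_{0,n-2}^{\prime}+\tau_{0,n-2}^{\prime}\,{t_{n-1}^{\prime}}^{2}$ that follows from Lemma~\ref{lemst}, together with the already-established bound from Corollary~\ref{cor1} that $\tau_{0,n}^{\prime}$ is a combination of the $t^{i}$ with $i\le n+1$. Thus the \emph{top} of the exponent range comes for free, and the genuine content of Lemma~\ref{fl1} is twofold: (i) only exponents of one fixed parity occur, namely the parity of $n+1$ (even exponents when $n$ is odd, odd exponents when $n$ is even); and (ii) no exponent drops below $-(n-1)$. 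I would prove (i) and (ii) as invariants that survive the whole reduction process of Lemma~\ref{lconv}.

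For (i), I would attach to each monomial $w=t^{k_{0}}{t_{1}^{\prime}}^{k_{1}}\cdots {t_{m}^{\prime}}^{k_{m}}$ of ${B^{\prime}_{\rm ST}}^{aug}$ its exponent sum $\varepsilon(w)=k_{0}+k_{1}+\cdots+k_{m}$, and show by strong induction on the order (exactly as in the proof of Lemma~\ref{lconv}) that $w\,\widehat{\underset{{\rm skein}}{\cong}}\,\sum_{j}e_{j}\,t^{j}$ with every occurring exponent $j\equiv\varepsilon(w)\pmod 2$. The base case $t\,t_{1}^{\prime}\,\widehat{\underset{{\rm skein}}{\cong}}\,1+t^{2}$ has $\varepsilon=2$ and only even exponents, and the single reduction step of Lemma~\ref{lemst}, which replaces $w'\,{t_{n}^{\prime}}^{k}\,{t_{n+1}^{\prime}}^{m}$ by $a\,w'\,{t_{n}^{\prime}}^{k-m}+b\,w'\,{t_{n}^{\prime}}^{k+m}$, changes the exponent sum from $\varepsilon(w')+k+m$ to $\varepsilon(w')+k\mp m$, hence preserves it modulo $2$. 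Since $\varepsilon(\tau_{0,n}^{\prime})=n+1$, this gives precisely the claimed parity.

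For (ii), I would run the same style of induction but with a strengthened hypothesis that also records a lower bound for the exponents along the way: for the intermediate monomials $\tau_{0,j}^{\prime}\,{t_{j}^{\prime}}^{k}$ produced during the reduction (with $k\ge 1$), no exponent falls below a bound of the form $-(j-1)-(k-1)$ (which is $-(j-1)$ when $k=1$). One then checks that the two outputs of Lemma~\ref{lemst} respect this bound and that it is stable under the recursion $\tau_{0,n}^{\prime}\cong\tau_{0,n-2}^{\prime}+\tau_{0,n-2}^{\prime}{t_{n-1}^{\prime}}^{2}$, anchoring the induction with the small cases ($\tau_{0,0}^{\prime}=t$, $\tau_{0,1}^{\prime}\cong 1+t^{2}$, and the computations of Example~\ref{exa1}). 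Combining (i), (ii) and the upper bound $n+1$ from Corollary~\ref{cor1} then forces the support of $\tau_{0,n}^{\prime}$ to be contained in exactly the set of powers listed in the two displayed formulas.

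The main obstacle I anticipate is finding and verifying the correct strengthened inductive statement for part (ii): the monomials generated en route carry exponents as large in absolute value as $n+1$ (cf.\ the term $t\,{t_{1}^{\prime}}^{4}$ in Example~\ref{exa1}), so one must confirm that when $|k-m|$ or $k+m$ is large the ensuing further reductions still never push an exponent below $-(n-1)$. Everything else is the routine bookkeeping already implicit in Lemma~\ref{lconv} and Example~\ref{exa1}. Alternatively, once (i) is in hand, part (ii) can be obtained by iterating the closed formula for $\tau_{0,n-2}^{\prime}$ through one more application of Lemma~\ref{lemst} and the reductions of Equations~(\ref{conv1}), and simply reading off the two extreme exponents, which is a short direct computation.
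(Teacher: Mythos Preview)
The paper does not actually prove Lemma~\ref{fl1}; it is simply asserted with the phrase ``simple calculations lead to the following result'', pointing back to the pattern visible in Example~\ref{exa1}. So there is no proof in the paper to compare against, and your proposal is already substantially more rigorous than what the paper offers.

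Your parity argument (i) is correct and clean: the exponent sum $\varepsilon(w)=\sum k_i$ is preserved modulo $2$ by the single reduction step of Lemma~\ref{lemst}, and $\varepsilon(\tau_{0,n}')=n+1$, which pins down the parity exactly as claimed.

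For (ii), your primary formulation needs a small repair. The intermediate monomials produced by the reduction are of the form $\tau_{0,m-1}'\,{t_m'}^{k}$ (not $\tau_{0,j}'\,{t_j'}^{k}$), and the exponent $k$ is \emph{not} constrained to be $\ge 1$: already at the second step one sees $\tau_{0,n-3}'\,{t_{n-2}'}^{-1}$. A workable strengthened hypothesis is that at step $j$ (i.e.\ for monomials with leading index $n-j$) the exponent $k$ lies in $\{1-j,\,3-j,\,\ldots,\,j+1\}$; the map $k\mapsto\{1-k,\,1+k\}$ then sends this set into the corresponding set at step $j+1$, and at step $n$ one reads off $T(0,k)=t^{k}$ with $k\in\{1-n,\ldots,n+1\}$.

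That said, your \emph{alternative} route in the last paragraph is the cleanest and avoids this bookkeeping entirely. Assume inductively that $\tau_{0,n-2}'$ is supported on $\{-(n-3),\ldots,n-1\}$. Write $\tau_{0,n}'\ \widehat{\underset{\rm skein}{\cong}}\ \tau_{0,n-2}'+\tau_{0,n-2}'\,{t_{n-1}'}^{2}$ and note that, after destabilizing the idle strands, $t^{j}\,{t_{n-1}'}^{2}\ \widehat{\cong}\ t^{j}\,{t_1'}^{2}\ \widehat{\underset{\rm skein}{\cong}}\ a\,t^{j-2}+b\,t^{j+2}$ by Lemma~\ref{lemst}. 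Hence the support of $\tau_{0,n-2}'\,{t_{n-1}'}^{2}$ lies in $\{j\pm 2:\,-(n-3)\le j\le n-1\}=\{-(n-1),\ldots,n+1\}$, and combining with the first summand gives exactly the range in Lemma~\ref{fl1}. This is a two-line induction once (i) is in place, and I would recommend it over the strengthened-hypothesis version.
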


\subsection{Dealing with negative exponents}\label{ne}

We now deal with monomials in ${B_{{\rm ST}}}^{aug}$ with negative exponents, i.e. elements in ${B_{{\rm ST}}}^{aug}\, \backslash\, B_{{\rm ST}}$, and express them to sums of elements in $B_{{\rm ST}}$. As mentioned before, due to the fact that the Kauffman bracket doesn't take orientation into consideration, we use unoriented braids. In the figures that follow, we start with standard braids and we ignore the orientation when a Kauffman bracket skein relation is performed, resulting in unoriented braids. Note that we will be using the same notation for standard braids and unoriented braids. In Figure~\ref{nexp} the cases of $t^{-1}$ and $t^{-2}$ are illustrated and it is important to note that the resulting monomial $t\, t_1^{\prime}$ is an unoriented braid. Finally, it is worth mentioning that in order to demonstrate the importance of the new proposed method for computing Kauffman bracket skein modules, we will follow a more algebraic procedure for converting elements in ${B_{{\rm ST}}}^{aug}\, \backslash\, B_{{\rm ST}}$ to elements in $B_{{\rm ST}}$ using the Kauffman bracket skein relation, instead of straightforward using the fact that $t^{-n}\, \sim\, t^n$, for all $n\in \mathbb{N}$ on the unoriented braid level.

\begin{figure}[H]
\begin{center}
\includegraphics[width=5.2in]{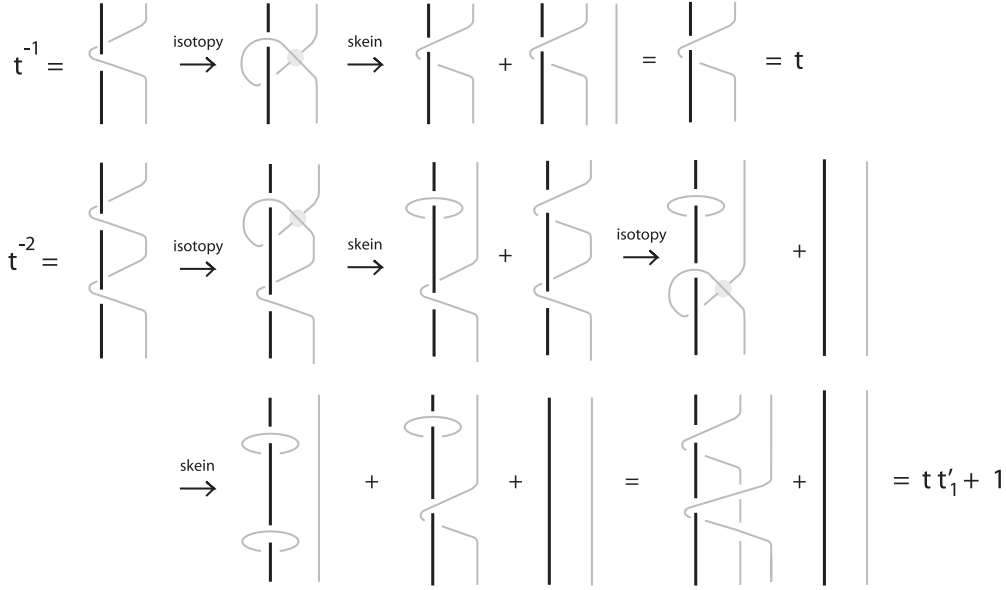}
\end{center}
\caption{Converting $t^{-1}$ and $t^{-2}$ to elements in $B^{\prime}_{{\rm ST}}$.}
\label{nexp}
\end{figure}

Note that in the case of $t^{-2}$, when we apply the skein relation for the first time, a closed looping $\hat{t}$ is ``isolated''. We then apply again the skein relation by ignoring $\hat{t}$ (readers may find useful to think of it as being shrunk). Recall also that $tt_1^{\prime}\, \widehat{\underset{{\rm skein}}{\cong}}\, 1+t^2$ (recall Figure~\ref{bas1}). Thus, $t^{-2}\, \widehat{\underset{{\rm skein}}{\cong}}\, 1\, +\, t^2$ in KBSM(ST).
\bigbreak
Let us consider now the general case for $n\in \mathbb{N}$. We first perform an RI move on the top of the looping generator creating a kink. We then apply the Kauffman bracket skein relation on the resulting crossing and the result is $t^{-n+2}\, +\, t^{-n+1}\, t_1^{\prime}$. We apply the same procedure on the resulting monomials. Note that in the monomial $t^{-n+1}\, t_1^{\prime}$, the looping $t_1^{\prime}$ will not interfere with the above mentioned process on the closure $\widehat{t^{-n+1}\, t_1^{\prime}}$. We continue until all negative exponents become either zero or one. Obviously, this depends on $n$ being odd or even. Hence, we have proved the following result:

\begin{lemma}\label{fl2}
For $n\in \mathbb{N}$, the following relations hold in KBSM(ST):
\[
t^{-n}\ \widehat{\underset{{\rm skein}}{\cong}}\ \begin{cases} \underset{i=0}{\overset{\frac{n-1}{2}}{\sum}}\, a_i\, \tau_{0, 2i}^{\prime} &,\ {\rm for}\ n\ {\rm odd}\\
\underset{i=0}{\overset{\frac{n}{2}}{\sum}}\, a_i^{\prime}\, \tau_{0, 2i}^{\prime} &,\ {\rm for}\ n\ {\rm even}
\end{cases},
\]
\noindent where $a_i, a^{\prime}_i$ coefficients.
\end{lemma}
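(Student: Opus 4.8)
The plan is to make rigorous the diagrammatic process sketched just before the statement, by inducting on $n$ and carefully tracking how a single application of the Kauffman bracket skein relation lowers the exponent of the looping generator. First I would set up the reduction step: given $t^{-n}$ for $n \geq 1$, perform an RI (first Reidemeister) move at the top of the looping generator to introduce a kink, producing a single new crossing. Applying the Kauffman bracket skein relation $L_+ = AL_0 + A^{-1}L_\infty$ to this crossing, one of the two smoothings reconnects the strand to itself and yields $t^{-n+2}$ (up to the framing correction absorbed into the coefficient), while the other smoothing produces a configuration equal to $t^{-n+1} t'_1$ — the looping generator wound $n-1$ fewer times with one extra $t'_1$-loop attached, exactly as in the $t^{-1}$ and $t^{-2}$ cases depicted in Figure~\ref{nexp}. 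Since $t'_1$ is geometrically supported away from the part of the diagram where the RI-moves and smoothings take place, it does not interfere with iterating the procedure on $t^{-n+1}$ inside $t^{-n+1}t'_1$.

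Next I would iterate. Applying the same reduction to $t^{-n+2}$ gives $t^{-n+4} + (\text{coeff}) t^{-n+3}t'_1$, and applying it to the $t^{-n+1}$ factor of $t^{-n+1}t'_1$ gives $t^{-n+3}t'_1 + (\text{coeff}) t^{-n+2}t'_1 t'_1$ — but here one must note that $t^{-m}t'_1 t'_1$, and more generally the iterated products arising, should be folded back using the already-established conversion lemmata of \S\ref{implem}. More cleanly: I would run the induction so that at each stage every term is of the form $t^{-m}\,\sigma$ with $0 \le m < n$ and $\sigma$ a product of $t'_i$'s with nonnegative indices, and then invoke that such a term is, by the reduction together with Lemma~\ref{lemst}, expressible in terms of lower powers. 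The cleanest bookkeeping is to prove simultaneously (strong induction on $n$) that $t^{-n}$ is a combination of the $\tau'_{0,2i}$ with $2i \le n-1$ (for $n$ odd) or $2i \le n$ (for $n$ even). The base cases $n=1$ and $n=2$ are precisely Figure~\ref{nexp}: $t^{-1} \widehat{\underset{{\rm skein}}{\cong}} t t'_1 = \tau'_{0,1}$-type reduction giving (after the extra skein move) $1 + t^2$, hence in particular a combination of $\tau'_{0,0}$ and the relevant generator; and $t^{-2} \widehat{\underset{{\rm skein}}{\cong}} 1 + t^2$.

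For the inductive step, the reduction gives $t^{-n} \widehat{\underset{{\rm skein}}{\cong}} a\, t^{-n+2} + b\, t^{-n+1} t'_1$. Now $t^{-n+2}$ has $|-n+2| \le n-2 < n$, so the induction hypothesis applies directly to it. For $t^{-n+1}t'_1$: if $-n+1 \ge 0$ this is already in $B_{\rm ST}^{\prime\,aug}$ with a single $t'_1$, and Lemma~\ref{lconv} (via Equation~(\ref{conv1})) writes it as a sum of $t^j$'s; if $-n+1 < 0$, apply the reduction to the $t^{-n+1}$ part, noting $|-n+1| < n$, and conclude by induction (the trailing $t'_1$, and any further trailing loops, being handled by Lemma~\ref{lemst} which merges adjacent $t'$-powers at the cost of lowering the index). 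Collecting terms and matching the parity of the exponents — odd powers of $t$ arise only from odd $n$ and conversely — yields exactly the two-case formula in the statement, with the stated ranges of summation, since each reduction step changes the exponent by $2$ and the process terminates when the exponent reaches $0$ or $1$.

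The main obstacle I anticipate is not any single calculation but the bookkeeping: making precise the claim that the $t'_1$-loop (and the further loops generated in subsequent steps) genuinely "stays out of the way", i.e. that one may apply the skein reduction to the $t^{-m}$-factor of a word $t^{-m}t'_1\cdots t'_k$ without the loops obstructing the RI-move or the smoothings, and then reorganize the output using the \S\ref{implem} lemmata so that the strong-induction hypothesis (phrased purely in terms of powers $t^{-m}$ with $m < n$) actually applies. This is exactly the place where the unoriented-braid formalism of \S\ref{unbr1} does the work: since the Kauffman bracket ignores orientation, the smoothings that would otherwise produce illegally-oriented (non-braid) pieces are legitimate, and the pictures of Figure~\ref{nexp} extend verbatim to general $n$. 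Once that geometric point is granted, the parity analysis and the determination of the summation ranges are routine.
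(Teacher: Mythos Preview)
Your proposal is correct and follows essentially the same route as the paper, whose proof is the short paragraph immediately preceding the lemma: perform an RI move, apply the skein relation to obtain $t^{-n+2} + t^{-n+1}t'_1$, observe that the trailing $t'_1$ does not interfere with repeating the move, and iterate until the negative exponent reaches $0$ or $1$. Your version makes the strong induction more explicit and correctly identifies the non-interference of the accumulated loops as the one point requiring care; the detour through Lemmata~\ref{lemst} and~\ref{lconv} is unnecessary, since the paper simply lets the $t'$-loops accumulate directly into the $\tau'_{0,k}$ form rather than merging them.
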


It is crucial now to note that when we apply Lemma~\ref{fl1} on a monomial $\tau^{\prime}$ in $B_{\rm ST}^{\prime}$, we obtain monomials in $B_{\rm ST}$ and in ${B_{\rm ST}}^{aug} \backslash B_{\rm ST}$. We then apply Lemma~\ref{fl2} on the resulting monomials in ${B_{\rm ST}}^{aug} \backslash B_{\rm ST}$ and obtain elements in $B^{\prime}_{\rm ST}$ again, but of lower order than the initial monomial $\tau^{\prime}$. Thus, if we continue this way, we will eventually reach at elements in $B_{\rm ST}$. Indeed we have the following result:

\begin{prop}\label{pr1}
For $n\in \mathbb{N}$, the following hold in KBSM(ST):
\[
t^{-n}\ \widehat{\underset{{\rm skein}}{\cong}}\ \underset{i=0}{\overset{n}{\sum}}\, a_i\, t^{i},
\]
\noindent where $a_i$ are coefficients for all $i$.
\end{prop}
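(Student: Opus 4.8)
The plan is to prove Proposition~\ref{pr1} by strong induction on $n\in\mathbb{N}$, combining Lemma~\ref{fl2} and Lemma~\ref{fl1} (equivalently Corollary~\ref{cor1}) with the well-ordering of $B_{\rm ST}$ established earlier. The point is that Lemma~\ref{fl2} rewrites $t^{-n}$ as a sum of elements $\tau'_{0,2i}$ of $B'_{\rm ST}$, and then Lemma~\ref{fl1} rewrites each such $\tau'_{0,2i}$ as a sum of monomials $t^j$ with $|j|\le 2i+1$; the only trouble is that some of these $t^j$ can again have negative exponent, so a single application does not finish the job. The resolution is to track the order, in the sense of Definition~\ref{order}, of the monomials appearing and to check it strictly decreases, so that the induction closes.

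First I would set up the base cases $n=0$ (trivial, $t^0=t^0$) and $n=1,2$, which are exactly the computations displayed in Figure~\ref{nexp} and the paragraph following it, giving $t^{-1}\ \widehat{\underset{{\rm skein}}{\cong}}\ a_0+a_2 t^2$-type expressions with nonnegative exponents. Then, assuming the statement holds for all positive integers strictly less than $n$, I would apply Lemma~\ref{fl2} to write
\[
t^{-n}\ \widehat{\underset{{\rm skein}}{\cong}}\ \underset{i}{\sum}\, a_i\, \tau'_{0,2i},
\]
where the sum runs over $0\le i\le \lfloor n/2\rfloor$ (with the parity restriction of Lemma~\ref{fl2}). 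Next I would apply Lemma~\ref{fl1} to each $\tau'_{0,2i}$, obtaining a linear combination of monomials $t^{j}$ with $-(2i+1)\le j\le 2i+1$, hence with $|j|\le 2i+1\le n-1<n$ when $i\le (n-1)/2$ — and in the extreme case $i=n/2$ (for $n$ even) one gets $|j|\le n+1$, but Lemma~\ref{fl1} for $n$ even actually produces only \emph{odd} exponents $j$ with $|j|\le n+1$, and $j=n+1$ or $j=-(n+1)$ must be handled separately. The monomials $t^j$ with $j\ge 0$ are already in $B_{\rm ST}$ and need no further work; for each monomial $t^{-m}$ with $0<m<n$ appearing, the induction hypothesis converts it into a nonnegative-exponent sum $\sum_{\ell=0}^{m}a_\ell\, t^\ell$. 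Collecting everything yields $t^{-n}\ \widehat{\underset{{\rm skein}}{\cong}}\ \sum_{i=0}^{N}a_i t^i$ for some $N$, and one bounds $N\le n$ by inspecting the extreme terms (the $t^{n}$ term is the dominant one surviving, matching the closed formulas implicit in Example~\ref{exa1} and Lemma~\ref{fl1}).

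The main obstacle is the bookkeeping at the top degree: one must verify that no monomial $t^{\pm(n+1)}$ or higher genuinely survives, i.e. that the estimate $N\le n$ in the statement is correct rather than $N\le n+1$. The cleanest way to handle this is to argue via the order of Definition~\ref{order}: every $\tau'_{0,2i}$ produced by Lemma~\ref{fl2} has total exponent sum, and hence order, strictly smaller than that of $t^{-n}$ (this is visible from the skein computation, each RI-move-and-resolve step lowering the exponent), so each $t^j$ produced by Lemma~\ref{fl1} satisfies $|j|\le n$; then the only negative exponents fed back into the induction hypothesis are $t^{-m}$ with $m\le n-2$ (the parity of the exponents in Lemma~\ref{fl1}/\ref{fl2} forbids $m=n-1$ from the relevant branch), comfortably below $n$, and the induction terminates with the asserted bound. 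I would phrase the final assembly as: the augmented set $B_{\rm ST}^{aug}$ and $B_{\rm ST}$ therefore span the same submodule of KBSM(ST), which together with Corollary~\ref{cor1} shows $B_{\rm ST}$ spans KBSM(ST) — the linear independence, needed for Theorem~\ref{newbasis}, coming from the triangularity argument against $B'_{\rm ST}$ described in the outline preceding Definition~\ref{order}.
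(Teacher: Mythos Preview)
Your core approach --- apply Lemma~\ref{fl2}, then Lemma~\ref{fl1}, then handle the remaining negative exponents recursively --- is exactly the paper's. The paper phrases the recursion as an iteration (``the highest exponent in the terms in $B$ is $(-n+3)>-n$, so continue applying the lemmata''), while you phrase it as strong induction on $n$; these are equivalent, and your framing is perfectly acceptable.

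There is, however, a genuine error in your attempt to pin down the upper bound $N\le n$ via Definition~\ref{order}. You claim that each $\tau'_{0,2i}$ produced by Lemma~\ref{fl2} has order strictly smaller than $t^{-n}$ because its ``total exponent sum'' is smaller. This is backwards: $\tau'_{0,2i}=t\,t'_1\cdots t'_{2i}$ has exponent sum $2i+1\ge 1$, while $t^{-n}$ has exponent sum $-n<0$, so by Definition~\ref{order}(a) one has $t^{-n}<\tau'_{0,2i}$, i.e.\ the order goes \emph{up}, not down. The ordering of Definition~\ref{order} is therefore the wrong tool here. Note that the paper's own proof does not attempt to justify the sharp bound $N\le n$ either --- it only argues termination (eventually all exponents are nonnegative). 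The sharp bound is really the content of Remark~\ref{remsp}, which follows most cleanly from the observation $t^{-n}\sim t^{n}$ on the unoriented-braid level (cf.\ Figure~\ref{unb}), not from the iteration.

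Your final paragraph, about $B_{\rm ST}^{aug}$ and $B_{\rm ST}$ spanning the same submodule and linear independence via triangularity, is material for Theorem~\ref{newbasis}, not for this proposition; it should be dropped here.
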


\begin{proof}
Let $n$ be an odd natural number. The case that $n$ is even follows similarly. We have that:
\[
\begin{array}{lclcl}
t^{-n} & \overset{{\rm Lemma}~\ref{fl2}}{\sim} & \underset{i=0}{\overset{\frac{n-1}{2}}{\sum}}\, a_i\, \tau_{0, 2i}^{\prime} &\overset{{\rm Lemma}~\ref{fl1}}{\sim} & \underset{i=0}{\overset{\frac{n-1}{2}}{\sum}}\, a_i\, \left( \underset{j=0}{\overset{i}{\sum}}\, a_{i}^{\prime}\, t^{2j+1}\ +\ \underset{j=1}{\overset{i}{\sum}}\, a_{i}^{\prime \prime}\, t^{-2j+1}\right)\ =\\
&&&&\\
&&& = & \underset{A}{\left(\underbrace{\underset{i=0}{\overset{\frac{n-1}{2}}{\sum}}\, \underset{j=0}{\overset{i}{\sum}}\, b_{i}\, t^{2j+1}}\right)}\ +\ \underset{B}{\left(\underbrace{\underset{i=0}{\overset{\frac{n-1}{2}}{\sum}}\, \underset{j=1}{\overset{i}{\sum}}\, b_{i}^{\prime}\, t^{-2j+1}}\right)},\\
\end{array}
\]
\noindent where $a_i, a_{i}^{\prime}, a_{i}^{\prime \prime}, b_i$ and $b_i^{\prime}$ are coefficients for all $i$. \smallbreak

\smallbreak
We now deal with terms in $B$ that consist of elements in ${B_{{\rm ST}}}^{aug}\, \backslash\, B_{{\rm ST}}$, since all terms in $A$ are in $B_{{\rm ST}}$. The highest exponent in the terms in $B$ is $(-n+3)>-n$ and thus, we may continue applying Lemmata~\ref{fl1} and \ref{fl2} until we eliminate all negative exponents. All resulting monomials will eventually belong to $B_{{\rm ST}}$ and the proof is concluded.
\end{proof}

\begin{remark}\label{remsp}\rm
It is possible to find a closed formula for $t^{-n}$, $n\in \mathbb{N}$ in KBSM(ST). More precisely, simple calculations lead to the following result:
\[
t^{-n}\ \widehat{\underset{{\rm skein}}{\cong}}\ \begin{cases} \underset{i=0}{\overset{n/2}{\sum}}\, a_i\, t^{2i}&,\ {\rm for}\ n\ {\rm even}\\
\underset{i=0}{\overset{\frac{n-1}{2}}{\sum}}\, a^{\prime}_i\, t^{2i+1}&,\ {\rm for}\ n\ {\rm odd}\\ \end{cases},
\]
\noindent where $a_i, a^{\prime}_i$ coefficients, for all $i$.
\end{remark}

\subsection{The new basis of KBSM(ST)}

We are now in position to prove the main result of this section. More precisely:

\begin{thm}\label{mr}
The set $B_{\rm ST}$ forms a basis for the Kauffman bracket skein module of the solid torus.
\end{thm}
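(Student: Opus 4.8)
The plan is to feed the reduction results of \S\ref{implem}--\S\ref{ne} into the classical \emph{triangular change of basis} argument, using that the braided form of Turaev's set, $B'_{\rm ST}$ of Eq.~(\ref{Lpr}), is already known to be a basis of KBSM(ST). Concretely, I would (1) prove that $B_{\rm ST}$ spans KBSM(ST), and (2) prove that $B_{\rm ST}$ is $R$-linearly independent by exhibiting the transition from the well-ordered set $B'_{\rm ST}$ to the well-ordered set $B_{\rm ST}$ as a lower triangular matrix over $R=\mathbb{Z}[A^{\pm1}]$ with invertible diagonal entries.

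For the spanning statement: by Corollary~\ref{cor1}, every element $\tau'_{0,n}\in B'_{\rm ST}$ is an $R$-linear combination of monomials $t^i$ with $i\le n+1$, i.e.\ of elements of $B_{\rm ST}^{aug}$; and by Proposition~\ref{pr1}, every $t^{-n}$ with $n\ge1$ is an $R$-linear combination of elements of $B_{\rm ST}$. Composing these two facts, every element of $B'_{\rm ST}$ lies in the $R$-span of $B_{\rm ST}$; since $B'_{\rm ST}$ spans KBSM(ST), so does $B_{\rm ST}$.

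For independence: reindex $B'_{\rm ST}=\{y_m\}_{m\ge0}$ along its bijection with Turaev's basis $\{x^m\}_{m\ge0}$, so that $y_0=1$ and $y_m=\tau'_{0,m-1}$ for $m\ge1$. The closed formulas of Lemma~\ref{fl1}, together with Remark~\ref{remsp} used to clear the negative exponents they produce (whose absolute values are at most $m-1$, hence strictly below the leading degree $m$), yield
\[
y_m\ \widehat{\underset{{\rm skein}}{\cong}}\ \lambda_m\,t^{m}\ +\ \sum_{i<m}\,\mu_{m,i}\,t^{i},\qquad \lambda_m,\mu_{m,i}\in R,
\]
where the crucial point is that $\lambda_m$ is a unit of $R$: the leading monomial $t^m$ is produced by always following the exponent-raising branch of Lemma~\ref{lemst} and the leading term of Lemma~\ref{fl1}, and each such step multiplies by a unit (a sign times a power of $A$, from smoothing a single crossing and a framing correction). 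Hence, with respect to the ordering of Definition~\ref{order}, the matrix $M$ whose $(m,i)$ entry is the coefficient of $t^i$ in $y_m$ is lower triangular, has finitely supported rows, and has unit diagonal; such an $M$ is invertible over $R$, its inverse $N$ again lower triangular with finitely supported rows, obtained by back-substitution $t^m=\lambda_m^{-1}y_m-\lambda_m^{-1}\sum_{i<m}\mu_{m,i}t^i$ and induction on $m$. If $\sum_k c_k t^k=0$ with finitely many $c_k\neq0$, substituting $t^k=\sum_m N_{km}y_m$ gives $\sum_m\bigl(\sum_k c_k N_{km}\bigr)y_m=0$, so $\sum_k c_k N_{km}=0$ for all $m$ since $\{y_m\}$ is a basis; multiplying by $M=N^{-1}$ forces $c_k=0$ for all $k$. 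Thus $B_{\rm ST}$ is linearly independent, and together with the spanning statement it is a basis.

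I expect the main obstacle to be exactly the claim that the diagonal coefficients $\lambda_m$ are units of $R$: the statements of Lemmata~\ref{lemst}, \ref{fl1}, \ref{fl2} and of Remark~\ref{remsp} were written with their coefficients suppressed, so one must reinstate them and check that along the unique chain of reductions producing the top monomial $t^m$ every factor is of the form $\pm A^{k}$; once this is granted, the triangular inversion, and hence the theorem, is formal. A secondary, purely bookkeeping point is to confirm that interleaving Lemma~\ref{fl1} (which introduces negative exponents) with Remark~\ref{remsp}/Proposition~\ref{pr1} (which removes them) strictly lowers the order of the monomials at each stage so that the process terminates inside $B_{\rm ST}$ --- but this is precisely the termination already arranged in the proof of Proposition~\ref{pr1}.
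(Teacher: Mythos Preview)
Your proposal is correct and follows essentially the same route as the paper: both arguments express each $\tau'_{0,n}\in B'_{\rm ST}$ as a finite $R$-linear combination of $t^i$'s via Corollary~\ref{cor1}/Lemma~\ref{fl1}, clear the negative exponents via Proposition~\ref{pr1}/Remark~\ref{remsp}, observe that the highest surviving power is $t^{n+1}$, and conclude by the lower-triangular change-of-basis principle against the known basis $B'_{\rm ST}$. The paper's version is terser on the linear algebra (it simply asserts the infinite lower-triangular matrix with invertible diagonal and stops), whereas you spell out the back-substitution and the independence argument explicitly; your flagged obstacle---that the suppressed coefficients along the leading branch must be checked to be units of $R$---is exactly the point the paper also leaves implicit.
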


\begin{proof}
Consider $\tau^{\prime}_{0, n} \in B^{\prime}_{{\rm ST}}$ of index $n$. We apply Corollary~\ref{cor1} and obtain a sum of elements in ${B_{{\rm ST}}}^{aug}$ with $(n+1)$ being the highest exponent appearing in the sum. More precisely, for $n$ odd we have:

\[
\begin{array}{lclcl}
B^{\prime}_{{\rm ST}}\, \ni\, \tau^{\prime}_{0, n} & \widehat{\underset{{\rm skein}}{\cong}} & \underset{\in B_{\rm ST},\, \forall i}{\underbrace{\underset{i=0}{\overset{\frac{n+1}{2}}{\sum}}\, a_i\, t^{2i}}} & + & \underset{\in {B_{\rm ST}}^{aug}}{\underbrace{\underset{i=1}{\overset{\frac{n-1}{2}}{\sum}}\, \underline{a_i\, t^{-2i}}}}\\
&&&&\\
& \widehat{\underset{{\rm skein}}{\cong}} & \underset{\in B_{\rm ST},\, \forall i}{\underbrace{\underset{i=0}{\overset{\frac{n+1}{2}}{\sum}}\, a_i\, t^{2i}}} & + & \underset{\in B^{\prime}_{\rm ST}}{\underbrace{\underset{i=1}{\overset{\frac{n-1}{2}}{\sum}}\, \left(\underset{j=0}{\overset{i}{\sum}}\,  a_i^{\prime}\,  \tau^{\prime}_{0, 2j}\right)}}
\end{array}
\]


\noindent The monomial of highest order in the sum $\underset{i=1}{\overset{\frac{n-1}{2}}{\sum}}\, \left(\underset{j=0}{\overset{i}{\sum}}\,  a_i^{\prime}\,  \tau^{\prime}_{0, 2j}\right)$ is $\tau_{0, n-1}^{\prime}$, which according to the ordering relation of Definition~\ref{order}, is of less order than the initial monomial $\tau_{0, n}^{\prime}$. Hence, if we continue applying the same procedure on the resulting monomials, we will eventually reach at a sum of elements in $B_{{\rm ST}}$, with $(n+1)$ being the highest exponent.
\smallbreak
Thus, we have proved that any monomial $\tau^{\prime}$ in $B^{\prime}_{{\rm ST}}\, :\, ind(\tau^{\prime})=n$, can be written as a sum of elements in $B_{{\rm ST}}$ and in particular, $\tau^{\prime}\, \widehat{\underset{{\rm skein}}{\cong}}\, \underset{i=0}{\overset{n+1}{\sum}}\, a_i\, t^i$, for some coefficients $a_i$. Moreover, $t^{n+1}$ is the monomial with the highest order appearing when converting the monomial $\tau^{\prime} \in B^{\prime}_{{\rm ST}}$ of index $n$ to elements in $B_{{\rm ST}}$. Hence, the sets $B^{\prime}_{{\rm ST}}$ and $B_{{\rm ST}}$ are related via an infinite lower triangular matrix with invertible elements in the diagonal. Thus, the set $B_{{\rm ST}}$ forms a basis of KBSM(ST).
\end{proof}

\section{A basis for the Kauffman bracket skein module of $L(p,1)$ via braids}\label{kblens}

In this section we solve the infinite system of Equations~(\ref{eqbbm}) for $q=1$, which is equivalent to computing the Kauffman bracket skein module of the lens spaces $L(p,1)$. Recall that this infinite system of equations is obtained from elements in the basis of KBSM(ST), $B_{\rm ST}$, by performing braid band moves and then imposing to the generic invariant $V$ for knots and links in ST relations of the form $V_{\widehat{t^n}}\, =\, V_{\widehat{bbm(t^n)}}$, for all $n\in \mathbb{N}$, where $bbm(t^n)\, =\, t^p\, t_1^n\, \sigma_1^{\pm 1}$ (recall Theorem~\ref{markov}). Recall also that the unknowns in the system are the $s_i$'s, coming from the fourth rule of the trace function in Theorem~\ref{tr}, that is, $tr(t^n)\, =\, s_n$, for all $n\in \mathbb{Z}$.

\subsection{The infinite system}\label{infs}

In this subsection we present a series of results toward the solution of the infinite system. Our main task is the evaluation of $tr(t^p\, t_1^n\, \sigma_1^{\pm 1})$ for $n\in \mathbb{N}$. Recall now that the trace function is defined using the $t_i^{\prime}$'s and not the $t_i$'s that we obtain when performing a bbm on elements in $B_{\rm ST}$ (recall the fourth rule of Theorem~\ref{tr}). Thus, our first task is to express the monomials $t^p\, t_1^n\, \sigma_1^{\pm 1}\in \Sigma$ to sums of monomials in $\Sigma^{\prime}$ (recall Eq.~(\ref{bHeck})). In \cite{DL2}, a method for expressing monomials in $\Sigma^{\prime}$ to sums of monomials in $\Sigma$ (and vice-versa) is presented on the generalized Hecke algebra of type B, $H_{1, n}$, level. More precisely, it is shown that a monomial $\tau \in \Sigma$ can be written as a sum of elements in $\Sigma^{\prime}$, such that the homologous word of $\tau$ in $\Sigma^{\prime}$, i.e. the monomial $\tau$ where all $t_i$'s are replaced by $t_i^{\prime}$'s, is the monomial of the highest order appearing in the resulting sum. All other monomials appearing in the sum have smaller index, or equal index but different exponents (recall the ordering relation of Definition~\ref{order}). Finally, the braiding ``tails'', i.e. the $\sigma_i$'s that come after the $t_i$'s in monomials in $\Sigma$, are eliminated in this process.

\smallbreak

Since now the generalized Temperley-Lieb algebra of type B, $TL_{1, n}$, is a quotient of $H_{1, n}$ over the ideal generated by elements in Eq.~(\ref{ideal}) that only involves the $\sigma$'s, and since the only braiding generator in the monomials $bbm(t^n)$ is $\sigma_1$, it follows that the results presented in \cite{DL2} are also true in $TL_{1, n}$ for the monomials $bbm(t^n)=t^p\, t_1^n\, \sigma_1^{\pm 1}$. Hence, we obtain the following result:

\begin{lemma}\label{l1}
For $n, p \in \mathbb{N}\backslash \{0\}$, the following holds in $TL_{1, n}$:
\[
t^p\, t_1^n\, \sigma_1\ \widehat{\cong}\ \underset{i=0}{\overset{n}{\sum}}\, a_i\, t^{p+i}\, {t_1^{\prime}}^{n-i},
\]
\noindent where $a_i$ coefficients for all $i$.
\end{lemma}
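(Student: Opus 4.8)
The plan is to reduce the statement entirely to the conversion procedure of \cite{DL2} on the level of $H_{1,n}$, and then argue that this procedure descends to the quotient $TL_{1,n}$. First I would recall from \cite{DL2} the explicit algorithm that rewrites a monomial $\tau\in\Sigma$ as a sum of monomials in $\Sigma^{\prime}$: one pushes the braiding generators past the looping elements using the conjugation relations $t_i = g_i\ldots g_1 t g_1\ldots g_i$ versus $t_i^{\prime} = g_i\ldots g_1 t g_1^{-1}\ldots g_i^{-1}$, applies the quadratic relation $\sigma_i^2=(u-u^{-1})\sigma_i+1$ to absorb the ``double'' braiding generators, and reads off that the homologous word (all $t_i$ replaced by $t_i^{\prime}$) appears as the leading term, all other terms having strictly smaller index or equal index with different exponents, and with all braiding tails eliminated. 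For the specific monomial $t^p\,t_1^n\,\sigma_1$ the only looping generator carrying a braiding tail is $t_1$ (since $t^p = {t^{\prime}}^{p}$ already has no tail), and the only braiding generator present is $\sigma_1$; hence the conversion uses only relations among $\sigma_1$ (together with $t$, $t_1$, $t_1^{\prime}$), all of which survive verbatim in $TL_{1,n}$.

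The key point, which I would make precise, is that the defining ideal of $TL_{1,n}$ inside $H_{1,n}$ is generated by the elements $\sigma_{i,i+1}$ of Eq.~(\ref{ideal}), each of which involves \emph{only} the braiding generators $\sigma_i,\sigma_{i+1}$ and \emph{no} looping generators. Consequently, every algebraic identity used in the \cite{DL2} conversion of $t^p\,t_1^n\,\sigma_1$ --- being built from the braid relations, the commuting relations $t\sigma_i=\sigma_i t$ for $i>1$, the mixed relation $\sigma_1 t\sigma_1 t = t\sigma_1 t\sigma_1$, and the quadratic relation on $\sigma_1$ --- is an identity that already holds in $H_{1,n}$ and therefore a fortiori in the quotient $TL_{1,n}$. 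So the equality $t^p t_1^n \sigma_1 = \sum_{i=0}^{n} a_i\, t^{p+i}\,{t_1^{\prime}}^{\,n-i}$ is literally the image under $\overline{\pi}$ of the corresponding identity in $H_{1,n}$; the coefficients $a_i$ are exactly those produced in \cite{DL2} and do not depend on passing to the Temperley--Lieb quotient. The bound on the index (the exponent of $t^{\prime}_1$ ranges only from $0$ to $n$, and the exponent of $t$ only increases) is inherited from the same source, since converting $t_1$ to $t_1^{\prime}$ either leaves the $t_1$-exponent fixed or trades a unit of it for a unit of $t$-exponent via the conjugation formulas.

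I would therefore structure the proof as: (1) state that in $H_{1,n}$ one has, by the conversion lemma of \cite{DL2}, $t^p t_1^n \sigma_1 = \sum_{i=0}^{n} a_i\, t^{p+i}\,{t_1^{\prime}}^{\,n-i}$ with the leading term $t^p {t_1^{\prime}}^{n}$ (the homologous word) and the remaining terms of strictly lower order; (2) observe that since $\langle \sigma_{i,i+1}\rangle$ involves no looping generators, this identity is preserved under the canonical epimorphism $H_{1,n}\twoheadrightarrow TL_{1,n}$; (3) conclude. The only thing to be a little careful about is the symbol $\widehat{\cong}$: the statement is phrased up to conjugation and stabilization, so I should note that the conversion of \cite{DL2} is in fact a genuine algebraic equality in $H_{1,n}$ (hence stronger than $\widehat{\cong}$), which is then transported to $TL_{1,n}$. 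The main --- indeed essentially the only --- obstacle is making the descent argument airtight: one must verify that none of the rewriting steps in the \cite{DL2} algorithm secretly needs a relation that fails in $TL_{1,n}$, and the cleanest way to see this is precisely the observation that the Temperley--Lieb ideal is generated by purely-braiding elements, so that whatever holds in $H_{1,n}$ among words in $t,t_1,t_1^{\prime},\sigma_1$ continues to hold after the quotient.
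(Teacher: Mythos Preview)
Your proposal is correct and follows essentially the same approach as the paper. The paper does not give a separate formal proof of this lemma; its justification is exactly the paragraph preceding the statement, namely that the conversion procedure of \cite{DL2} produces the desired expression in $H_{1,n}$, and since $TL_{1,n}$ is the quotient of $H_{1,n}$ by an ideal generated by the purely-braiding elements $\sigma_{i,i+1}$, the identity descends. Your write-up is a more detailed articulation of that same argument; the only minor over-caution is your worry that some rewriting step might ``secretly need a relation that fails in $TL_{1,n}$'' --- passing to a quotient only \emph{adds} relations, so any equality in $H_{1,n}$ automatically holds in $TL_{1,n}$ and no further verification is needed.
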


Note that in the monomial of highest order in $\underset{i=0}{\overset{n}{\sum}}\, a_i\, t^{p+i}\, {t_1^{\prime}}^{n-i}$ is the homologous monomial of $t^p\, t_1^n$, $t^p\, {t^{\prime}_1}^n$. In order now to evaluate $tr\left(\underset{i=0}{\overset{n}{\sum}}\, a_i\, t^{p+i}\, {t_1^{\prime}}^{n-i}\right)$, we first perform Lemma~\ref{lemst} on the monomials in the sum and we have the following result:

\begin{lemma}\label{l2}
For $n, p \in \mathbb{N}\backslash \{0\}$, the following holds in $TL_{1, n}$:
\[
t^p\, {t_1^{\prime}}^n \ \widehat{\underset{{\rm skein}}{\cong}}\ a_i \, t^{p-n}\, +\, b_i\, t^{p+n},
\]
\noindent where $a_i, b_i$ are coefficients for all $i$.
\end{lemma}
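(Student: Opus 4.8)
The plan is to prove Lemma~\ref{l2} by induction on $n \in \mathbb{N} \setminus \{0\}$, using the Kauffman bracket skein relation on the topmost crossing between the loop generator $t$ and the framing of the braid, exactly in the spirit of Lemma~\ref{lemst} and the diagrammatic arguments of \S\ref{implem}. The key observation is that $t^p \, {t_1^\prime}^n$ has a diagram in which, after conjugation, the block ${t_1^\prime}^n$ can be brought adjacent to $t^p$, and there is a crossing where the moving strand that creates ${t_1^\prime}$ passes over (or under) the loop generator. Resolving this crossing with the skein relation $\sigma = u^{-1}(1) + \ldots$, i.e. $\sigma_1 \widehat{\cong} u^{-1}(1 + u\sigma_1)$ written so as to produce the two smoothings, yields two terms: one where the strand is smoothed so that two of the loop-windings cancel (decreasing the winding number by one unit on each side, hence lowering $n$ by one and shifting $p$), and one where they add. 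Iterating this produces a telescoping sum; the point of the lemma is that all intermediate contributions collapse and only the extreme exponents $t^{p-n}$ and $t^{p+n}$ survive with nonzero coefficient.

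More concretely, first I would establish the base case $n=1$: the monomial $t^p \, t_1^\prime = t^p \, \sigma_1 t \sigma_1^{-1} \widehat{\cong} \sigma_1^{-1} t^p \sigma_1 \, t$, and resolving one crossing via the skein relation together with the reduction $tr(a\sigma_n)=z\,tr(a)$-type manipulations on the braid level gives a sum of $t^{p-1}$ and $t^{p+1}$; this is essentially Figure~\ref{bas1} and Lemma~\ref{lemst} applied with index shifted. Then, assuming the statement for all smaller $n$, I would write ${t_1^\prime}^n = {t_1^\prime}^{n-1}\, t_1^\prime$ and apply Lemma~\ref{lemst} (or rather the single-crossing resolution underlying it) to the pair $t^p\,{t_1^\prime}^{n-1}\,{t_1^\prime}$, reducing it to a combination of $t^{p}\,{t_1^\prime}^{n-2}$-type and $t^{p\pm 2}\,{t_1^\prime}^{n-2}$-type monomials — more precisely to combinations of $t^{p'}\,{t_1^\prime}^{n-1}$ with $p'\in\{p-1,p+1\}$ after merging the final loop; applying the induction hypothesis to each then yields terms $t^{(p\pm 1)\pm(n-1)}$, and the extremes are $t^{p-n}$ and $t^{p+n}$.

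The hard part will be controlling the cancellation of the intermediate exponents: a naive induction produces terms $t^{p+n-2}, t^{p+n-4}, \ldots$ and it is not a priori obvious that their coefficients vanish. Here is where I would exploit the unoriented-braid picture of \S\ref{unbr1}: on the unoriented-braid level $t^{-m} \sim t^{m}$ (Figure~\ref{unb}), and more importantly the diagram of $t^p{t_1^\prime}^n$ is (up to the skein relation and regular isotopy) simply a connected sum of a $t^{p}$-winding with an $n$-fold winding that links the fixed strand trivially in homology beyond the total count; resolving all crossings at once shows that every smoothing either keeps the strand wound $p+n$ times or unwinds it completely to $p-n$, with no genuinely intermediate state surviving because each resolution is an all-or-nothing merge. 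Alternatively — and this is the cleaner route — I would simply invoke Remark~\ref{remsp} / Lemma~\ref{fl1}: we already know $t^{-k}$ and $\tau^\prime_{0,k}$ expand with a strictly controlled parity and range of exponents, and a direct computation (as in Example~\ref{exa1}, now carried out with a leading $t^p$ factor) shows the expansion of $t^p{t_1^\prime}^n$ has top exponent $p+n$ and bottom exponent $p-n$; the intermediate terms are then absorbed by re-expanding them via the same identities, and an ordering/triangularity argument identical to the one in the proof of Theorem~\ref{mr} forces the coefficients into the claimed two-term shape. I expect the bookkeeping of coefficients (which will be Laurent polynomials in $u$) to be routine once the structural claim about exponent ranges is in place, so I would state the lemma with unspecified coefficients $a_i, b_i$ as written and not track them explicitly.
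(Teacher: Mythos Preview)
You have missed that Lemma~\ref{l2} is an \emph{immediate special case} of Lemma~\ref{lemst}, not something requiring a new argument. In Lemma~\ref{lemst}, take $w$ to be the empty word (so $ind(w)=-1$), take the two adjacent looping indices to be $0$ and $1$, and set $k=p$, $m=n$. Since $t_0^{\prime}=t$, the statement $w\,{t_0^{\prime}}^{p}\,{t_1^{\prime}}^{n}\ \widehat{\underset{{\rm skein}}{\cong}}\ a\cdot {t_0^{\prime}}^{p-n}+b\cdot {t_0^{\prime}}^{p+n}$ is literally $t^{p}\,{t_1^{\prime}}^{n}\ \widehat{\underset{{\rm skein}}{\cong}}\ a\,t^{p-n}+b\,t^{p+n}$. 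The paper says this explicitly in the sentence introducing Lemma~\ref{l2}: ``we first perform Lemma~\ref{lemst} on the monomials in the sum.'' The diagrammatic proof of Lemma~\ref{lemst} (Figure~\ref{lmst}) already handles arbitrary exponents $k,m$ in one step; there is no induction and no intermediate terms to cancel.

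Your inductive scheme is not wrong in spirit, but it manufactures a difficulty that is not there. When you write ${t_1^{\prime}}^{n}={t_1^{\prime}}^{n-1}\cdot t_1^{\prime}$ and try to peel off the last factor, you are applying a single-crossing resolution rather than Lemma~\ref{lemst} itself; this does produce mixed terms like $t^{p\pm 1}{t_1^{\prime}}^{n-1}$, and then you are forced to worry about cancellation of the resulting cross terms $t^{(p\pm 1)\mp(n-1)}$. That cancellation is real but entirely avoidable: Lemma~\ref{lemst} resolves \emph{all} the relevant crossings between the $p$-block and the $n$-block at once (this is what Figure~\ref{lmst} depicts), yielding the two-term answer directly. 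Your appeals to Remark~\ref{remsp}, Lemma~\ref{fl1}, and the triangularity argument of Theorem~\ref{mr} are therefore unnecessary detours.
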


Hence, we have that $tr(t^p\, t_1^n\, \sigma_1^{\pm 1})\ =\ \underset{i=0}{\overset{n}{\sum}}\, \left(a_i^{\prime}\, s_{p+n}\, +\, b_i^{\prime}\, s_{p-n+2i}\right)$, for some coefficients $a_i^{\prime}$ and $b_i^{\prime}$, since:
\[
tr(t^p\, t_1^n\, \sigma_1^{\pm 1})\ =\ tr\left(\underset{i=0}{\overset{n}{\sum}}\, a_i\, t^{p+i}\, {t_1^{\prime}}^{n-i}\right)\ =\ tr\left(\underset{i=0}{\overset{n}{\sum}}\, \left(a_i^{\prime}\, t^{p+n}\, +\, b_i^{\prime}\, t^{p-n+2i}\right)\right).
\]

\begin{remark}\rm
Consider now a negative braid band move applied on elements in $B_{\rm ST}$, i.e. $t^n\, \rightarrow\, t^p\, t_1^n\, \sigma_1^{-1}$. Observe  that $t^p\, t_1^n\, \sigma_1^{-1}\, =\, t^p\, t_1^{n-1}\, \sigma_1\, t\, \widehat{\cong}\ t^{p+1}\, t_1^{n-1}\, \sigma_1$. Thus, applying Lemmata~\ref{l1} and \ref{l2} on $t^{p+1}\, t_1^{n-1}\, \sigma_1$ we obtain:
\[
t^{p+1}\, t_1^{n-1}\, \sigma_1\ \widehat{\cong}\ \underset{i=0}{\overset{n-1}{\sum}}\, a_i\, t^{p+1+i}\, {t_1^{\prime}}^{n-1-i}\ \widehat{\underset{{\rm skein}}{\cong}}\ \underset{i=0}{\overset{n-1}{\sum}}\, \left(b_i \, t^{p+n}\, +\, c_i\, t^{p-n+2+2i}\right),
\]
\noindent for some coefficients $a_i, b_i$ and $c_i$. Evaluating now the trace of these elements, we have that
\[
tr(t^p\, t_1^n\, \sigma_1^{-1})\ =\ \underset{i=0}{\overset{n-1}{\sum}}\, \left(b_i \, s_{p+n}\, +\, c_i\, s_{p-n+2+2i}\right).
\]
Less number of unknowns of the infinite system appear in the equation $V_{\widehat{t^n}}\ =\ V_{\widehat{bbm_-(t^n)}}$, compared to the equation $V_{\widehat{t^n}}\ =\ V_{\widehat{bbm_+(t^n)}}$, and thus, we may consider only the positive braid band moves for obtaining the equations of the infinite system for $n\in \mathbb{N}$.
\end{remark}

\subsection{Dealing with indices greater than $p$}\label{modp}

We are now in position to prove the first fundamental result toward the solution of the infinite system. We have the following:

\begin{prop}\label{th1}
For $n\in \mathbb{N}$ he following relations hold in KBSM(L(p,1)) for $n\in \mathbb{N}$:
\[
s_{p+n}\ =\ \begin{cases} \underset{i=0}{\overset{n/2}{\sum}}\, (c_i\, s_{2i}\, +\, d_i\, s_{p-2i}) &,\, {\rm for}\ n\ {\rm even}\\
\underset{i=0}{\overset{(n-1)/2}{\sum}}\, (c^{\prime}_i\, s_{2i-1}\, +\, d^{\prime}_i\, s_{p-2i-1}) &,\,{\rm for}\ n\ {\rm odd} \end{cases},
\]
\noindent where $c_i, d_i, c^{\prime}_i, d^{\prime}_i$ are coefficients.
\end{prop}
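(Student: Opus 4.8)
The plan is to read the relation directly off the infinite system. Fix $n \in \mathbb{N}\setminus\{0\}$ and write down the band-move equation $V_{\widehat{t^{n}}} = V_{\widehat{bbm_{+}(t^{n})}}$, where by Theorem~\ref{markov}(iv) we have $bbm_{+}(t^{n}) = t^{p}\, t_{1}^{n}\, \sigma_{1}\in B_{1,2}$. By Theorem~\ref{inva} the left-hand side equals $tr(t^{n}) = s_{n}$ (the normalizing factor is trivial since $t^{n}\in B_{1,1}$ has no braiding letters), while the right-hand side equals, up to an invertible normalizing scalar coming from the number of strands and the exponent sum, the trace $tr(t^{p}t_{1}^{n}\sigma_{1})$. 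So the whole problem is to compute this trace and isolate the contribution of $s_{p+n}$.

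First I would unfold the argument of the trace. Applying Lemma~\ref{l1} gives $t^{p}t_{1}^{n}\sigma_{1}\ \widehat{\cong}\ \sum_{i=0}^{n} a_{i}\, t^{p+i}{t_{1}^{\prime}}^{\,n-i}$ in $TL_{1,n}$, with the homologous monomial $t^{p}{t_{1}^{\prime}}^{\,n}$ as the leading (highest-order) term; then Lemma~\ref{l2} rewrites each summand as $t^{p+i}{t_{1}^{\prime}}^{\,n-i}\ \widehat{\underset{{\rm skein}}{\cong}}\ a_{i}^{\prime\prime}\, t^{\,p+2i-n} + b_{i}^{\prime\prime}\, t^{\,p+n}$ (for $i=n$ there is nothing to do and one simply gets $t^{p+n}$). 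Taking the trace via rule~(4) of Theorem~\ref{tr}, namely $tr(t^{k})=s_{k}$, yields an identity of the shape
\[
s_{n}\ =\ \gamma\, s_{p+n}\ +\ \sum_{i=0}^{n-1}\beta_{i}\, s_{p-n+2i},
\]
where $\gamma$ gathers every contribution landing on $s_{p+n}$ (the $i=n$ term together with all the $b_{i}^{\prime\prime}$'s, times the overall scalar) and the remaining exponents $p-n,\,p-n+2,\,\ldots,\,p+n-2$ all have the parity of $p+n$. The key point, which I would verify by tracking the leading monomial $t^{p}{t_{1}^{\prime}}^{\,n}$ through Lemmas~\ref{l1} and~\ref{l2}, is that $\gamma$ is invertible: its dominant part is a product of the units produced by the quadratic relation together with the $(1+u^{2})$-factors from the specialization $z=-1/(u(1+u^{2}))$ of Theorem~\ref{tr2}, and one checks that no further summand feeding into $\gamma$ can cancel it. Granting this, solve for $s_{p+n}=\gamma^{-1}s_{n}-\gamma^{-1}\sum_{i=0}^{n-1}\beta_{i}s_{p-n+2i}$.

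Now I would finish by strong induction on $n$, with base cases $n=1$ and $n=2$ done by hand. On the right-hand side, the terms $s_{p-n+2i}$ of exponent at most $p$ are exactly the $s_{p-2j}$ (for $n$ even), resp.\ $s_{p-2j-1}$ (for $n$ odd), $0\le j\le\lfloor n/2\rfloor$, already of the advertised shape; the terms of exponent $>p$ are of the form $s_{p+m}$ with $0\le m<n$ and $m\equiv n\pmod 2$, and the inductive hypothesis rewrites each of them in the claimed closed form, contributing only low-index $s_{2j}$/$s_{2j-1}$ and further $s_{p-2j}$/$s_{p-2j-1}$. Finally the surviving $s_{n}$ coming from the left-hand side is put in range: for $n$ even it is literally the top term $s_{2(n/2)}$ of the first sum, and for $n$ odd it is eliminated by interleaving the band-move relation with the solid-torus skein reductions of \S\ref{ne} and the unoriented-braid identities of \S\ref{unbr1} (cf.\ Remark~\ref{remsp}), which trade $s_{n}$ for a combination of $s_{-1},s_{1},\ldots,s_{n-2}$. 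Collecting coefficients then gives the two displayed formulas.

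I expect the real work to sit in two places. The genuinely non-routine step is proving that $\gamma$ is invertible — this is precisely where the Temperley–Lieb specialization enters, and it requires ruling out accidental cancellations among the several summands of Lemmas~\ref{l1} and~\ref{l2} that contribute to the coefficient of $s_{p+n}$. The second, more bookkeeping-heavy point is the parity and index accounting in the inductive step, especially in the odd case, where one must make sure the left-over $s_{n}$ is absorbed so that the final expression closes up inside the prescribed set $\{s_{2i-1},s_{p-2i-1}\}_{0\le i\le(n-1)/2}$ without picking up a spurious $s_{n}$ term; this is what forces the band-move relations and the solid-torus relations to be applied in tandem rather than one after the other.
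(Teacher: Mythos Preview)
Your overall strategy---derive the band-move equation via Lemmas~\ref{l1} and~\ref{l2}, solve for $s_{p+n}$, then run strong induction on $n$---is exactly what the paper does. Two comments, one minor and one pointing to an actual gap.

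First, the invertibility of $\gamma$ is not the delicate point you make it out to be. The statement only asserts the \emph{existence} of coefficients $c_i,d_i,c_i',d_i'$, and the paper in fact suppresses all coefficients throughout its proof. The leading-term tracking you describe already gives invertibility immediately (it is the same ``lower triangular with invertible diagonal'' mechanism used for Theorem~\ref{mr}); no cancellation analysis among lower-order contributions is needed, because those lower-order terms land on different $s_j$'s by parity and the exponent bookkeeping you yourself record.

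Second, and more seriously: your handling of the odd case is wrong. You propose to ``trade $s_n$ for a combination of $s_{-1},s_1,\ldots,s_{n-2}$'' using \S\ref{ne} and Remark~\ref{remsp}. Those results go only in the direction negative~$\to$~nonnegative: they express $t^{-k}$ as a combination of $t^i$ with $i\ge 0$. They cannot express $t^n$ in terms of $t^j$ with $0\le j<n$, because $B_{\rm ST}=\{t^i\}_{i\ge 0}$ is a \emph{basis} of KBSM(ST) (Theorem~\ref{newbasis}). So the step you describe simply does not exist.

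The source of the confusion is a typo in the displayed formula: the odd-case sum should read $s_{2i+1}$, not $s_{2i-1}$. The paper's own computation makes this explicit, concluding for $n=2k+1$ that $s_{p+2k+1}=\sum_{i=0}^{k}(s_{2i+1}+s_{p-2i-1})$ (coefficients omitted). With the corrected indexing, $s_n=s_{2k+1}$ is precisely the top term of the first sum, exactly parallel to the even case, and no elimination is needed at all. Once you make that correction your argument collapses to the paper's: write the equation, isolate $s_{p+n}$, and substitute the inductive expressions for the $s_{p+m}$ with $m<n$.
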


\begin{proof}
The following diagram describes how the equations of the infinite system are obtained:
\[
\begin{array}{ccccccc}
t^n & \overset{bbm}{\longrightarrow} & t^pt_1^n\, \sigma_1 & \hat{\cong} & \underset{i=0}{\overset{n}{\sum}}\, a_i\, t^{p+i}\, {t_1^{\prime}}^{p-i} & \hat{\underset{{\rm skein}}{\cong}} & \underset{i=0}{\overset{n}{\sum}}\, \left(t^{p-n+2i}\, +\, t^{p+n}\right) \\
|   &                 &                     &                 &                                                                          &                                        &       |\\
tr   &                 &                     &                 &                                                                          &                                        &       tr\\
\downarrow   &                 &                     &                 &                                                                          &                                        &       \downarrow\\
s_n &                 &                    &                  &                                                                           &                                                                                                                  &        \underset{i=0}{\overset{n}{\sum}}\, s_{p-n+2i}          \\      
|   &                 &                     &                 &                                                                          &                                        &       |\\
V   &                 &                     &                 &                                                                          &                                        &       V\\
\downarrow   &                 &                     &                 &                                                                          &                                        &       \downarrow\\   
s_n &                 &                    &                  &            =                                                               &                                                                                                                  &        \left(-\, \frac{1+u^2}{u} \right)\, u^{2e}\cdot \underset{i=0}{\overset{n}{\sum}}\, s_{p-n+2i}          \\
\end{array}
\]

\noindent Hence, $s_n\, =\, \left(-\, \frac{1+u^2}{u} \right)\, u^{2e}\cdot \underset{i=0}{\overset{n}{\sum}}\, s_{p-n+2i}$, for all $n\in \mathbb{N}$. 

\bigbreak

We now consider the following cases: 

\smallbreak

\begin{itemize}
\item For $n=2k,\, k\in \mathbb{N}$, we have:

\[
\begin{array}{lclcccl}
{\rm For}\ n=0 & : & s_0\, =\, s_{p} & \Leftrightarrow & s_p & = & s_0\, :=\, 1\\
&&&&&&\\
{\rm For}\ n=2 & : & s_2\, =\, s_{p-2}\, +\, s_p\, +\, s_{p+2} & \Leftrightarrow & s_{p+2} & = & s_0\, +\, s_2\, +\, s_{p-2}\\
&&&&&&\\
{\rm For}\ n=4 & : & s_4\, =\, s_{p-4}\, +\, s_{p-2}\, +\, s_p\, +\, s_{p+2}\, +\, s_{p+4} & \Leftrightarrow & s_{p+4}& =& s_0\, +\, s_2\, +\, s_4\, +\, \\
&&&&&  & +\, s_{p-4}\, +\, s_{p-2} \\
\vdots &&&&&&\\
{\rm For}\ n=2k & : &  & \Leftrightarrow & s_{p+2k} & = & \underset{i=0}{\overset{k}{\sum}}\, s_{2i}\, +\, \underset{i=1}{\overset{k}{\sum}}\, s_{p-2i}\\
\end{array}
\]

\bigbreak

\item For $n=2k+1,\, k\in \mathbb{N}$, we have:

\[
\begin{array}{cclcccl}
{\rm For}\ n=1 & : & s_1\, =\, s_{p-1}\, +\, s_{p+1} & \Leftrightarrow & s_{p+1} & = & s_1\, +\, s_{p-1}\\
&&&&&&\\
{\rm For}\ n=3 & : & s_3\, =\, s_{p-3}\, +\, s_{p-1}\, +\, s_{p+1}\, +\, s_{p+3} & \Leftrightarrow & s_{p+3}& =& s_1\, +\, s_3\, +\, s_{p-3}\, +\, \\
&&&&&  & +\, s_{p-1} \\
\vdots &&&&&&\\
{\rm For}\ n=2k+1 & : &  & \Leftrightarrow & s_{p+2k+1} & = & \underset{i=0}{\overset{k}{\sum}}\, (s_{2i+1} + s_{p-2i-1})\\
\end{array}
\]
\end{itemize}
The proof is concluded.
\end{proof}

An immediate result of Proposition~\ref{th1} is that every unknown of the infinite system with index greater than or equal to $p$, can be written as a sum of the unknowns with index less than $p$. That is:

\begin{cor}\label{cor3}
For $k\in \mathbb{N}$, the following hold in KBSM($L(p,1)$):
\[
s_{p+k}\ =\ \underset{i<p}{\sum}\, q_i\, s_i, \quad {\rm where}\ i\in \mathbb{Z}\ {\rm and}\ q_i\ {\rm coefficients}.
\]
\end{cor}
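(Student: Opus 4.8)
The plan is to deduce the statement from Proposition~\ref{th1} by strong induction on $k\in\mathbb{N}$. For the base case $k=0$, the case $n=0$ of Proposition~\ref{th1} (equivalently, the first line of the case analysis in its proof) gives $s_p=s_0=1$; since $p\geq 1$ we have $0<p$, so $s_p$ is already of the required form.

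For the inductive step, I would fix $k\geq 1$, assume the assertion holds for every $k'\in\mathbb{N}$ with $k'<k$, and apply Proposition~\ref{th1} with $n=k$. In the even case $k=2m$ this expresses
\[
s_{p+k}\ =\ \sum_{i=0}^{m}c_i\,s_{2i}\ +\ \sum_{i=0}^{m}d_i\,s_{p-2i},
\]
and in the odd case $k=2m+1$ it expresses $s_{p+k}$ as the analogous combination of the terms $s_{2i-1}$ and $s_{p-2i-1}$ for $0\le i\le m$. In either case the right-hand side is a linear combination of terms $s_j$ of three kinds: (a) the ``descending'' terms $s_{p-2i}$ (resp.\ $s_{p-2i-1}$) with $i\geq 1$, for which $j\leq p-2<p$; (b) the single term $s_p$ occurring for $i=0$ in the even case, which by the base case equals $s_0$ and so is already of the required form (in the odd case the $i=0$ descending term is $s_{p-1}$, which already has index $<p$); and (c) the ``ascending'' terms $s_j$ with $j\in\{2i\}$ (resp.\ $j\in\{2i-1\}$), all of which satisfy $j\leq k$.

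It then remains to handle the ascending terms of kind (c). If $j<p$ there is nothing to do. If $j\geq p$, write $j=p+m'$ with $m'=j-p$; since $j\leq k$ and $p\geq 1$ we get $0\leq m'\leq k-p<k$, so the induction hypothesis applies to $s_{p+m'}=s_j$ and rewrites it as $\sum_{i<p}q_i\,s_i$ with $i\in\mathbb{Z}$. Substituting all these expressions back into the formula from Proposition~\ref{th1} and collecting coefficients, $s_{p+k}$ becomes a finite linear combination of terms $s_i$ with $i<p$, which is exactly the assertion; if desired, one may additionally invoke the $\mathrm{ST}$ relations (e.g.\ Proposition~\ref{pr1}) to further reduce any negative indices, but this is not needed for the statement.

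Since the whole argument is a straightforward reorganization of Proposition~\ref{th1}, I do not expect any serious obstacle. The only point requiring genuine care is the bookkeeping that guarantees termination of the induction: one must check that every index appearing on the right-hand side of Proposition~\ref{th1} that still exceeds $p$ is at most $k$ — hence of the form $p+m'$ with $m'<k$ — so that the strong induction hypothesis does apply, and that the stray term $s_p$ in the even case is absorbed via the base case.
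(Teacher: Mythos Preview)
Your argument is correct and is essentially a careful formalization of what the paper treats as an ``immediate result'' of Proposition~\ref{th1}: the paper gives no separate proof of this corollary beyond the remark preceding it, and your strong induction on $k$ simply makes explicit the recursive reduction implicit in that remark. The bookkeeping you flag (handling the stray $s_p$ via the base case, and checking that any ascending term $s_j$ with $j\ge p$ satisfies $j-p<k$ so that the induction hypothesis applies) is exactly the content one would need to spell out, and you have done so correctly.
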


\begin{remark}\rm
Note that since $tr(t^n)\, =\, s_n$, we have proved that the loop elements $t^{p+i}\in {\rm KBSM(ST)},\, i\in \mathbb{N}$ can be expressed as a sum of elements in the form $t^{p-j},\, j\in \mathbb{N}$. Roughly speaking, the exponents of the $t$'s are mod$p$ in KBSM($L(p,1)$).
\end{remark}

It is important to observe now that depending on $p$, the exponents on some monomials of the resulting sum may be negative. We shall deal with negative indices in the next subsection.

\subsection{Dealing with negative indices}\label{nelens}

We now deal with the unknowns of the infinite system that have negative indices and express them in terms of $s_i$'s where $i\in \mathbb{N}$. Recall that $s_{-n} = tr(t^{-n})$ and that $t^{-n}\, \widehat{\underset{{\rm skein}}{\cong}}\, \underset{i=0}{\overset{n}{\sum}}\, a_i\, t^i$ (Proposition~\ref{pr1} ). Thus, we have that:
\[s_{-n}\, =\, tr(t^{-n})\, =\, tr\left(\underset{i=0}{\overset{n}{\sum}}\, a_i\, t^i \right)\, =\, \underset{i=0}{\overset{n}{\sum}}\, a_i\, s_i.
\]

If instead of Proposition~\ref{pr1} we use relations from Remark~\ref{remsp}, we obtain the following:

\begin{lemma}\label{nein}
The following relations hold in KBSM($L(p,1)$) for all $n\in \mathbb{N}$:
\[
s_{-n}\ = \begin{cases} \underset{i=0}{\overset{n/2}{\sum}}\, s_{2i} &, {\rm for}\ n\ {even}\\
\underset{i=0}{\overset{(n-1)/2}{\sum}}\, s_{2i+1} &, {\rm for}\ n\ {odd}
\end{cases}
\]
\end{lemma}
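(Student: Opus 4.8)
The plan is to reduce the statement to the already-proved closed formula of Remark~\ref{remsp} by applying the trace. Recall that Remark~\ref{remsp} gives, in KBSM(ST),
\[
t^{-n}\ \widehat{\underset{{\rm skein}}{\cong}}\ \begin{cases} \underset{i=0}{\overset{n/2}{\sum}}\, a_i\, t^{2i} &,\ {\rm for}\ n\ {\rm even}\\ \underset{i=0}{\overset{(n-1)/2}{\sum}}\, a^{\prime}_i\, t^{2i+1} &,\ {\rm for}\ n\ {\rm odd}\\ \end{cases}
\]
for suitable coefficients. Since $tr(t^m)=s_m$ for all $m\in\mathbb{Z}$ by the fourth rule of Theorem~\ref{tr} (transported to $TL_{1,n}$ via Theorem~\ref{tr2}), applying $tr$ to both sides immediately yields $s_{-n}=\sum_i a_i\, s_{2i}$ for $n$ even and $s_{-n}=\sum_i a^{\prime}_i\, s_{2i+1}$ for $n$ odd. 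So the only thing left to check is that \emph{all the coefficients $a_i$ and $a^{\prime}_i$ equal $1$}; this is exactly what distinguishes the present lemma from the weaker statement of Remark~\ref{remsp}.

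First I would pin down the coefficient bookkeeping in the recursion from Subsection~\ref{ne}. The key step there was the local move: perform an RI move on the top of the looping generator to create a kink, then apply the Kauffman bracket skein relation $L_+ = A L_0 + A^{-1}L_\infty$ to that crossing, obtaining $t^{-n}\ \widehat{\underset{{\rm skein}}{\cong}}\ t^{-n+2}\, +\, t^{-n+1}\, t_1^{\prime}$. The crucial point is that after absorbing the RI framing correction and the bracket coefficients $A^{\pm1}$ into the normalizations that are being suppressed throughout (the passage uses $\widehat{\underset{{\rm skein}}{\cong}}$, which by the conventions of Subsection~\ref{implem} omits coefficients), the two resulting monomials appear with \emph{equal} coefficient. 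Then I would show inductively, using the base cases $tt_1^{\prime}\ \widehat{\underset{{\rm skein}}{\cong}}\ 1+t^2$ (Figure~\ref{bas1}) and the explicit low-order computations in Example~\ref{exa1}, that every monomial $t^{2i}$ (resp. $t^{2i+1}$) in the expansion of $t^{-n}$ picks up coefficient exactly $1$. Concretely: Lemma~\ref{fl2} writes $t^{-n}$ as a sum $\sum_i \tau^{\prime}_{0,2i}$ with all coefficients $1$, and Lemma~\ref{fl1} / the closed formula expansions each contribute $t^m$ with coefficient $1$; since the recursion only ever \emph{adds} such unit-coefficient contributions and never subtracts, no cancellation and no coefficient other than $1$ can arise.

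The main obstacle I anticipate is precisely this coefficient control: one must verify that the framing corrections from the RI moves and the $A^{\pm 1}$ factors from the skein relation, together with the normalization built into $V$ (the factor $\left(-\frac{1+u^2}{u}\right)^{n-1}u^{2e}$ of Theorem~\ref{inva}), conspire so that in the \emph{skein module} KBSM(ST) — where $A$ and the framing are fixed — every term genuinely has coefficient $1$ rather than a power of $A$ or a sign. A clean way to see this is to observe that in KBSM(ST) with unoriented braids one has the identity $t^{-m}\sim t^{m}$ for all $m$ (illustrated in Figures~\ref{unb} and \ref{nexp}), so $s_{-n}=s_n$; combined with Proposition~\ref{th1}'s recursion — or directly with the observation that $t^n$ and $t^{-n}$ represent the same unoriented curve up to framing, which is trivial here since KBSM quotients by the framing relation for the unknot — the parity-split sums with unit coefficients follow. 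I would present the argument via this unoriented-braid identity to sidestep the delicate coefficient tracking, then remark that it is consistent with the explicit recursive derivation of Subsection~\ref{ne} as already displayed in Example~\ref{exa1}.
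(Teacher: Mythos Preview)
Your first paragraph is exactly the paper's proof: apply the trace to the closed formula of Remark~\ref{remsp} and read off the $s$'s. That is all the paper does.

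The remainder of your proposal rests on a misreading of the statement. You assume the lemma asserts that all coefficients equal $1$ and therefore strengthens Remark~\ref{remsp}; it does not. The paper announces in \S\ref{implem} that ``in order to simplify our results, we will omit the coefficients involved,'' and this convention is in force for Lemma~\ref{nein} just as for Lemma~\ref{fl1}, Lemma~\ref{fl2}, Proposition~\ref{th1}, etc. Indeed the paper's own proof of the lemma says explicitly ``for some coefficients $a_i, b_i$'' and then ``Omitting the coefficients we have that \ldots'' before writing the displayed relations. So there is nothing further to prove beyond your first paragraph.

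Your attempted coefficient argument also has issues on its own terms. The claim that ``the two resulting monomials appear with \emph{equal} coefficient'' after an RI move plus a skein resolution is not justified: the RI framing correction contributes a factor $-A^{\pm 3}$ and the two skein smoothings carry $A$ and $A^{-1}$, so there is no reason these combine to give unit coefficients on both terms. And the shortcut via ``$t^{-m}\sim t^m$ hence $s_{-m}=s_m$'' conflates two different objects: the $s_k$ are independent indeterminates in the trace on $TL_{1,n}$ (rule (4) of Theorem~\ref{tr}), not values of the invariant on closed curves, so the unoriented-braid identification of $\widehat{t^m}$ with $\widehat{t^{-m}}$ in the skein module does not literally give an equality of the parameters $s_m$ and $s_{-m}$.
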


\begin{proof}
We have that:
\[
\begin{array}{cccc}
t^{-n} & \widehat{\underset{{\rm skein}}{\cong}} & \underset{i=0}{\overset{n/2}{\sum}}\, a_i\, t^{2i},\, {\rm for}\, n\ {\rm even\ \ \ or} & \underset{i=0}{\overset{(k-1)/2}{\sum}}\, b_i\, t^{2i+1},\, {\rm for}\, k\ {\rm odd}\\
| & & | & |\\
{\rm V} & & {\rm V} & {\rm V}\\
\downarrow & & \downarrow & \downarrow \\
s_{-n} & & \underset{i=0}{\overset{n/2}{\sum}}\, a_i\, s_{2i} & \underset{i=0}{\overset{(n-1)/2}{\sum}}\, b_i\, s_{2i+1} \\
\end{array}
\]

\noindent for some coefficients $a_i, b_i$. Omitting the coefficients we have that:
\[
s_{-2}\, =\, s_{0}\, +\, s_2,\ s_{-4}\, =\, s_0\, +\, s_2\, +\, s_4,\ \ldots, \ s_{-j}\, =\, s_0\, +\, s_2\, +\, \ldots\, +\, s_{j},\ {\rm for}\ j\ {\rm even\ and} 
\]
\[
s_{-1}\, =\, s_{1},\ s_{-3}\, =\, s_1\, +\, s_3,\ \ldots, \ s_{-j}\, =\, s_1\, +\, s_3\, +\, \ldots\, +\, s_{j},\ {\rm for}\ j\ {\rm odd}.
\]
\noindent The result follows.
\end{proof}

\subsection{The solution of the infinite system}

We are now in position to present the solution of the infinite system that is equivalent to computing the Kauffman bracket skein module of the lens spaces $L(p,1)$. We first generalize Proposition~\ref{th1} and in particular we show that the unknowns with indices greater than or equal to $p$ can be written as sums of unknowns with non-negative indices and also that these indices are $\mod p$. Indeed, we have the following:

\begin{prop}\label{fp1}
The following relations hold in KBSM($L(p,1)$) for all $n\in \mathbb{N}$:
\[
s_{p+n}\ =\ \underset{i=0}{\overset{p}{\sum}}\, a_i\, s_{i}.
\]
\end{prop}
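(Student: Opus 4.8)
The plan is to combine Proposition~\ref{th1} with Corollary~\ref{cor3} and Lemma~\ref{nein} to eliminate, in succession, the two obstructions that prevented Proposition~\ref{th1} from already being the desired statement: the appearance of unknowns $s_{p-2i}$ (resp. $s_{p-2i-1}$) with potentially \emph{negative} index on the right-hand side, and the fact that these indices run only over a residue class mod $2$ rather than over all of $\{0,1,\dots,p\}$. Concretely, I would first invoke Proposition~\ref{th1} to write $s_{p+n}$ as a $\mathbb{Z}$-linear (over the ring of coefficients) combination of terms $s_{j}$ with $j\in\{0,2,\dots,n\}$ together with terms $s_{p-2i}$, $i\le n/2$ (and the analogous odd-index version). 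The only terms here with index $\ge p$ are absent by construction, so the only terms needing further attention are those $s_{p-2i}$ with $p-2i<0$.

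Next I would push negative indices down using Lemma~\ref{nein}: for $m\in\mathbb{N}$, $s_{-m}$ equals a sum of $s_{2i}$ ($2i\le m$) or of $s_{2i+1}$ ($2i+1\le m$), according to the parity of $m$. Applying this to every term $s_{p-2i}$ with $p-2i<0$ replaces it by a sum of $s_k$ with $0\le k\le 2i-p<p$ of fixed parity. At this stage $s_{p+n}$ has been expressed as a sum of $s_k$ with $0\le k\le p$ — but possibly still only over indices of a single parity, or at least not visibly over the full range $\{0,\dots,p\}$; that is harmless, since the statement only claims $s_{p+n}=\sum_{i=0}^{p}a_i\,s_i$, and the missing indices simply get coefficient $a_i=0$. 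So after this substitution the claimed formula already holds for a \emph{fixed} $n$, assuming it holds for all smaller relevant indices; the whole thing is therefore an induction on $n$ (strong induction), with Proposition~\ref{th1}'s recursion providing the inductive step and its base cases ($n=0,1$) providing the base. I would phrase the induction carefully: the right-hand side of Proposition~\ref{th1} involves $s_{p-2i}$ which, when $p-2i\ge p$ is impossible, and when $0\le p-2i<p$ is already an admissible index, and when $p-2i<0$ is handled by Lemma~\ref{nein}; it also involves $s_{2i}$ with $2i\le n$, which for $2i\ge p$ must itself be rewritten by the inductive hypothesis (this is exactly Corollary~\ref{cor3} / the earlier ``mod $p$'' remark), and for $2i<p$ is admissible.

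The main obstacle, and the point that needs the most care, is the bookkeeping that guarantees the induction is well-founded: when I rewrite a term $s_{2i}$ with $p\le 2i\le n$ using the inductive hypothesis, I must be sure $2i<p+n$ strictly (so that it is covered by a smaller case), which holds since $2i\le n<p+n$; and when I rewrite $s_{-m}$ via Lemma~\ref{nein} I produce only indices in $[0,p)$, so no new large indices are created. Thus no term ever increases in ``size'' in the relevant sense, and the recursion terminates. A secondary, purely cosmetic point is that Proposition~\ref{th1} is stated with a split into even/odd $n$; I would either keep that split throughout or absorb it by noting that in both cases the output indices all have the same parity as $n$ and lie in $[0,p]$ after the negative-index reduction, which is exactly what is asserted. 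I would then remark that, as before, one can extract explicit closed formulas, but that is not needed here; the qualitative statement $s_{p+n}=\sum_{i=0}^{p}a_i s_i$ is all that is required, and it follows from the finitely many reduction rules above applied finitely many times.
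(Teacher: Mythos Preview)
Your approach is essentially the paper's: apply Proposition~\ref{th1}, clear negative indices with Lemma~\ref{nein}, and handle any surviving indices $\ge p$ by strong induction (the paper says ``repeat the same procedure''). One small inaccuracy to fix: you assert that applying Lemma~\ref{nein} to $s_{-m}$ (with $m=2i-p$) produces only indices in $[0,p)$, but Lemma~\ref{nein} actually returns indices in $[0,m]$, and for $n>2p$ one can have $m=2i-p\ge p$. The paper flags exactly this (``if $j>p$, we repeat the same procedure''). Your induction already absorbs it, since $m\le n-p<n$, so any resulting index $k\ge p$ has $k-p<n$ and is covered by the inductive hypothesis; just feed those terms back through the induction rather than claiming they land below $p$ automatically.
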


\begin{proof}
For $n\leq p$ the results follows from Proposition~\ref{th1}. Let now $n>p$ and consider the case where $n$ is even. The case $n$ being odd follows similarly.
\smallbreak
Let $j\in \mathbb{N}\backslash \{0\}$ such that $n=p+j$. Omitting the coefficients, we have:
\[
\begin{array}{lcl}
s_{p+n} & \overset{Prop.~\ref{th1}}{=} & \underset{i=0}{\overset{n/2}{\sum}}\, (s_{2i}\, +\, s_{p-2i})\ = \\
&&\\
& = & \underset{A}{\left[\underbrace{(s_0\, +\, s_p)\ +\ (s_{2}\, +\, s_{p-2})\ +\ \ldots\ +\ (s_p\, +\, s_0)}\right]} \ +\\
&&\\
& = & \underset{B}{\left[\underbrace{(s_{p+2}\, +\, s_{-2})\ +\ (s_{p+4}\, +\, s_{-4})\ +\ \ldots\ +\ (s_{p+j}\, +\, s_{-j})}\right]} \ +\\
\end{array}
\]
\noindent Note now that all indices in terms in $A$ are in $\{0, 1, \ldots, p\}$ and thus, we focus on terms in $B$, i.e. terms of the form $s_{-k}$ and $s_{p+k}$ for $k\in \{1, 2, \ldots, j \}$.

\smallbreak

We distinguish the following cases:
\begin{itemize}
\item[{\rm Case\ I:}] We deal first with terms of the form $s_{-k}$ for $k\in \{1, 2, \ldots, j \}$. We apply Lemma~\ref{nein} on these terms and we obtain sums of unknowns with all indices being less than or equal to $j$. Thus, if $j<p$ the result follows. If $j>p$, we repeat the same procedure until we reduce all indices $\mod p$.

\smallbreak

\item[{\rm Case\ II:}] We now deal with terms of the form $s_{p+k}$ for $k\in \{1, 2, \ldots, j \}$. We apply Proposition~\ref{th1} and obtain elements in $s_i$'s such that $i$ is either in $\mathbb{Z}\backslash \mathbb{N}$ (see Case I), or in $\mathbb{N}$ such that $i<p$. Continuing that way, we eventually obtain a sum of elements of the form $\underset{i=0}{\overset{p}{\sum}}\, s_i$.
\end{itemize}
\end{proof}

We shall now reduce the indices in the unknowns further. Let 
\[
I=\{0, 1, \ldots, \lfloor p/2 \rfloor\},
\]
\noindent where $\lfloor k \rfloor$ denotes the greatest integer less than or equal to $k$. We have:

\begin{thm}\label{mth1}
For all $n\in \mathbb{N}\backslash I$ we have that $s_n\, =\, \underset{i\in I}{\sum}\, s_i$.
\end{thm}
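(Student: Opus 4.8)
The plan is to show that the unknowns $s_n$ for $n \notin I = \{0, 1, \ldots, \lfloor p/2 \rfloor\}$ are redundant, i.e. expressible as linear combinations of $\{s_i\}_{i \in I}$. By Proposition~\ref{fp1} we already know that every $s_{p+n}$ with $n \geq 0$ reduces to a combination of $\{s_0, s_1, \ldots, s_p\}$, and by Lemma~\ref{nein} every $s_{-n}$ with $n > 0$ reduces to a combination of $\{s_0, s_1, \ldots, s_n\}$. Combining these, it suffices to handle the indices $n$ with $\lfloor p/2 \rfloor < n \leq p$ and to show each such $s_n$ collapses into $\sum_{i \in I} s_i$.

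\textbf{Key steps.} First I would invoke Proposition~\ref{fp1} (or rather the relations from Proposition~\ref{th1}) to reduce to the finite range of indices $0 \leq n \leq p$. Second, I would exploit a symmetry relation: the relations $s_{p+n} = \sum (s_{2i} + s_{p-2i})$ (resp. the odd analogue) from Proposition~\ref{th1}, read for small values of $n$, express $s_{p-k}$ in terms of $s_p, s_{p-2}, \ldots$ together with $s_0, s_2, \ldots$; rearranging, each $s_{p-k}$ with $k < p/2$ becomes a combination of $s_p$ and unknowns of index strictly less than $p - k$ plus low-index unknowns. More directly, since $s_p = 1 = s_0$ (from the $n=0$ case of Proposition~\ref{th1}), the equation for a given even $n = p - 2j$ with $0 \le j$, namely
\[
s_{p-2j} \ =\ \underset{i=0}{\overset{(p-2j)/2}{\sum}}\, (s_{2i} + s_{p-2i}),
\]
lets me solve for the ``largest'' index $s_{p-2j}$ appearing on the right in terms of the strictly smaller ones $s_{p-2j-2}, \ldots$ and the low ones $s_0, s_2, \ldots, s_{p-2j}$. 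Iterating this downward from $n = p$ toward $n = \lfloor p/2 \rfloor + 1$, each step eliminates one index from the ``upper half'' $(\lfloor p/2 \rfloor, p]$ in favour of indices in $I$ and indices already eliminated, so a finite induction on the index completes the argument. The odd case is handled identically using the odd relation of Proposition~\ref{th1} together with $s_1 = s_{-1}$.

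\textbf{Main obstacle.} The routine part is the bookkeeping of which index gets solved in terms of which; the genuinely delicate point is checking that the reduction is \emph{consistent}, i.e. that the various ways of expressing a given $s_n$ (via Proposition~\ref{fp1}, via Lemma~\ref{nein}, via the downward recursion) all agree, so that no hidden relation among $\{s_i\}_{i \in I}$ is forced. Equivalently, one must verify that after this reduction the remaining unknowns $\{s_i\}_{i \in I}$ are genuinely free — which is exactly what pins down the rank $\lfloor p/2 \rfloor + 1$ and matches Theorem~\ref{important}. I expect this freeness (lower-triangularity of the full reduction, as in the proof of Theorem~\ref{mr}) to be the part requiring the most care; the elimination of the upper-half indices itself is a direct finite induction on the index using the displayed relations.
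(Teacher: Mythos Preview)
Your displayed equation is not a relation that exists in the paper. Proposition~\ref{th1} (even case) reads $s_{p+n}=\sum_{i=0}^{n/2}(c_i s_{2i}+d_i s_{p-2i})$; substituting $n=p-2j$ puts $s_{2p-2j}$ on the left, not $s_{p-2j}$. So the recursion you propose to run ``downward from $n=p$'' never gets started: there is no equation in your toolkit whose left-hand side is an upper-half index $s_{p-2j}$.

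More fundamentally, the relations you invoke --- Proposition~\ref{th1}/\ref{fp1} and Lemma~\ref{nein} --- are not enough. Each equation in Proposition~\ref{th1} comes from a braid band move on $t^n$ with $n\ge 0$, and its natural use is to eliminate the single highest-index unknown $s_{p+n}$ occurring in it. Doing this for every $n\ge 0$ expresses all $s_m$ with $m\ge p$ in terms of $s_0,\ldots,s_{p-1}$ and exhausts those equations; Lemma~\ref{nein} is a relation already valid in KBSM(ST) and adds nothing new on the $L(p,1)$ side. You are then left with the $p$ unknowns $s_0,\ldots,s_{p-1}$ and no further relations among them. The paper closes this gap by performing braid band moves on the \emph{negative} powers $t^{-n}$ for $n\in I$ (elements of $B_{\rm ST}^{aug}\setminus B_{\rm ST}$): the equation $V_{\widehat{t^{-n}}}=V_{\widehat{bbm(t^{-n})}}$, combined with Lemma~\ref{nein} on the left and with Proposition~\ref{fp1} to reduce the high indices on the right, is precisely what allows one to solve for $s_{p-n}$ (an upper-half index) in terms of $\{s_i\}_{i\in I}$. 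This is the step your plan does not contain.

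A smaller point: the consistency/freeness issue you flag as the ``main obstacle'' is not part of Theorem~\ref{mth1} at all. That theorem only asserts the spanning statement; linear independence of $\{s_i\}_{i\in I}$ is handled separately in the proof of Theorem~\ref{important}, using the $\beta$-type band move rather than any bookkeeping of coefficients.
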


\begin{proof}
From Proposition~\ref{fp1}, it follows that it suffices to show that $s_n\, =\, \underset{i\in I}{\sum}\, s_i$ for $n\in \{\lfloor p/2 \rfloor +1, \ldots, p \}$. 

\smallbreak

Consider elements in ${B_{\rm ST}}^{aug} \backslash B_{\rm ST}$ of the form $t^{-n}$ for $n\in I$. We perform bbm's on these elements in order to obtain equations for the infinite system. Using Lemma~\ref{nein} and Proposition~\ref{th1}, we may convert the unknowns with negative indices to sums of unknowns with indices in $I$ and thus, we conclude that the unknowns with indices $I$ suffice to generate all unknowns $s_k$, where $k\notin I$. Equivalently, the elements $t^k$, where $k\notin I$, can be expressed as sums of elements of the form $t^n$, where $n\in I$. More precisely, we have:

\[
\begin{array}{clclcl}
\bullet & t^0\ \overset{bbm}{\rightarrow}\ t^p\, \sigma_1^{\pm 1} & \overset{V}{\Rightarrow} & 1:=s_0\, =\, s_p & \Rightarrow & s_p\ =\ s_0\\
&&&&&\\
\bullet & t^{-1}\ \overset{bbm}{\rightarrow}\ t^p\, t_1^{-1}\sigma_1^{\pm 1} & \overset{V}{\Rightarrow} & s_{-1}\, =\, s_{p-1} & \overset{Lemma~\ref{nein}}{\underset{s_{-1}=s_1}{\Rightarrow}}& s_{p-1}\ =\ s_1\\
&&&&&\\
\bullet & t^{-2}\ \overset{bbm}{\rightarrow}\ t^p\, t_1^{-2} \sigma_1^{\pm 1} & \overset{V}{\Rightarrow} & s_{-2}\, =\, s_{p-2}+s_p+s_{p+2} & \overset{Lemma~\ref{nein}\, \& \, Prop.~\ref{fp1}}{\underset{\left\{\begin{matrix} s_{-2}& = & s_0+s_2\\ s_{p+2} & = & \underset{i=0}{\overset{p}{\sum}}\, s_i\end{matrix}\right\}}{\Rightarrow}}& s_{p-2}\ =\ \underset{i=0}{\overset{p-3}{\sum}}\, s_i\\
&&&&&\\
\vdots &&\vdots& &\vdots& \\
\end{array}
\]

\[
\begin{array}{llcl}
\bullet & t^{-\lfloor p/2 \rfloor +1}\ \overset{bbm}{\rightarrow}\ t^p\, t_1^{-\lfloor p/2 \rfloor +1} \sigma_1^{\pm 1} & \overset{V}{\Rightarrow} & s_{-\lfloor p/2 \rfloor +1}\, =\, s_{p-\lfloor p/2 \rfloor -1}+\ldots+s_{p+\lfloor p/2 \rfloor+1}\\
&&&\\
&& \Rightarrow & s_{p-\lfloor p/2 \rfloor +1}\ =\ \underset{i=0}{\overset{p-\lfloor p/2 \rfloor}{\sum}}\, s_i
\end{array}
\]

Note that in all computations above, we omit the coefficients in order to simplify the algebraic expressions. The proof is now concluded.
\end{proof}

\begin{cor}
The set $\mathcal{B}_p\ := \{t^n,\, n\in I \}$ spans the Kauffman bracket skein module of the lens spaces $L(p,1)$.
\end{cor}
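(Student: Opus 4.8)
The plan is to read this off directly from Theorem~\ref{mth1} together with the surgery presentation of the skein module. First I would recall that, by Theorem~\ref{mr} (equivalently Theorem~\ref{newbasis}), the set $B_{\rm ST}=\{t^n : n\in\mathbb{N}\}$ is a basis of KBSM(ST), and that KBSM$(L(p,1))$ is the quotient of KBSM(ST) by the submodule generated by the differences $a-bbm(a)$ as $a$ ranges over $B_{\rm ST}$. Consequently the images of the loop elements $t^n$, $n\in\mathbb{N}$, already span KBSM$(L(p,1))$, and the only thing left to do is to eliminate those with $n\notin I$.

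Next I would invoke Theorem~\ref{mth1}, which asserts that in KBSM$(L(p,1))$ one has $s_n=\sum_{i\in I}s_i$ (up to the omitted coefficients) for every $n\in\mathbb{N}\setminus I$. Since $tr(t^n)=s_n$ for all $n$ by the fourth defining rule of the trace in Theorem~\ref{tr}, and since the normalized invariant $V$ of Theorem~\ref{inva} is the universal Kauffman bracket invariant for ST --- it separates the distinct basis elements $t^n$ of $B_{\rm ST}$ --- the relations among the $s_n$'s obtained by forcing Equations~(\ref{eqbbm}), i.e. $V_{\widehat{t^n}}=V_{\widehat{bbm(t^n)}}$, are exactly the defining relations of the quotient module $\mathrm{KBSM}(\mathrm{ST})/\langle a-bbm(a)\rangle$. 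Hence each $t^n$ with $n\notin I$ equals a finite $R$-linear combination $\sum_{i\in I}a_i\,t^i$ in KBSM$(L(p,1))$, so every generator $t^n$ of KBSM$(L(p,1))$ lies in the span of $\mathcal{B}_p=\{t^i : i\in I\}$, and $\mathcal{B}_p$ spans.

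I expect the only point requiring care --- and the reason this is phrased as a corollary rather than absorbed into Theorem~\ref{mth1} --- is the bookkeeping that makes the passage from trace-level identities to module-level relations fully rigorous: one must check that the reduction underlying Theorem~\ref{mth1}, which repeatedly rewrites an exponent outside $I$ in terms of smaller ones (Proposition~\ref{th1}, Proposition~\ref{fp1}) and converts negative exponents back to non-negative ones (Lemma~\ref{nein}, Proposition~\ref{pr1}), actually terminates for each fixed $t^n$. This is the case because every application of these lemmata strictly decreases the relevant exponent, so only finitely many steps are needed before all exponents land in $I$. No ingredient beyond those already assembled in Sections~\ref{SMSST} and \ref{kblens} is required.
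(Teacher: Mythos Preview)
Your proposal is correct and follows essentially the same approach as the paper: the corollary is stated there without proof, as an immediate consequence of Theorem~\ref{mth1}, and you have simply made explicit the passage from the trace identities $s_n=\sum_{i\in I}s_i$ back to relations among the loop elements $t^n$ in the quotient module. The only remark is that your care about termination and about translating $s_n$-relations into $t^n$-relations, while legitimate, is already implicit in the paper's setup---the reductions in Proposition~\ref{th1}, Proposition~\ref{fp1} and Lemma~\ref{nein} are derived from genuine skein and band-move manipulations on the $t^n$'s themselves, so the module-level relations are established directly rather than inferred from the trace.
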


We now show that the set $\mathcal{B}_p$ is linearly independent and thus, we prove the main theorem of the paper, Theorem~\ref{important}:

\begin{proof}
From the discussion and the results presented above, it follows that it suffices to show that $t^{\lfloor p/2 \rfloor}$ cannot be written as a sum of elements in $B_p \backslash \{t^{\lfloor p/2 \rfloor}\}$, or equivalently, that the unknown $s_{\lfloor p/2 \rfloor}$ cannot be written as a sum of unknowns of indices in $I\backslash \{\lfloor p/2 \rfloor\}$. We consider the most interesting case that $p$ is even. The case that $p$ is odd follows immediately from the proof of Theorem~\ref{mth1}.

\smallbreak

Note that since $t^{p/2}$ is in the basis of KBSM(ST), and since KBSM($L(p,1)$)$=$KBSM(ST)$/<{\rm band\ moves}>$, it suffices to show that the braud band moves do not affect $t^{\lfloor p/2 \rfloor}$. Recall first that isotopy in $L(p,1)$ can be viewed as isotopy in ST together with one of the two different types of band moves (Theorem~\ref{markov}). So far we have been using the $\alpha$-type band moves that are immediately translated on the level of braids by the braid band moves. In order to prove that $t^{p/2}$ for $p$ even, cannot be expressed as a sum of looping generators of exponents in $I\backslash \lfloor p/2 \rfloor$, we use the $\beta$-type band moves. This simplifies the calculations involved a lot.

\smallbreak

Indeed, consider $t^{p/2} \in B_{\rm ST}$ and perform the $\beta$-type band move as illustrated in Figure~\ref{betabm}. Note that the result of the performance of a type $\beta$ band move on a looping generator is not a mixed braid. But as shown in Figure~\ref{betabm}, isotopy in ST results in the looping generator $\widehat{t^{p-p/2}}\, =\, \widehat{t^{p/2}}$, that we braid and obtain $t^{p/2}$ again. Hence, $t^{p/2}$ cannot be written as sum of elements in $\mathcal{B}_p\backslash \{t^{p/2}\}$ and the proof is now concluded.
\end{proof}

\begin{remark}\rm
It is possible to prove that $t^{p/2}$ cannot be written as a sum of elements in $\mathcal{B}_p\backslash \{t^{p/2}\}$ using the braid band moves, i.e. the $\alpha$-type band moves. The difficulty lies in keeping track of all coefficients involved, since following the same procedure as in the proof of Theorem~\ref{mth1} for $t^{-p/2}\, \overset{bbm}{\rightarrow}\, t^p\, t_1^{-p/2}\, \sigma_1^{\pm 1}$ (applying Lemma~\ref{nein} and Proposition~\ref{th1}), results in an equation of the form $\underset{i=0}{\overset{p/2}{\sum}}\, a_i\, s_i\ =\ \underset{i=0}{\overset{p/2}{\sum}}\, b_i\, s_i$, where $a_i, b_i$ coefficients for all $i$. We would then have to prove that $a_i = b_i$ for all $i$, which is very complicated and technical.
\end{remark}

\begin{remark}\rm
The case $L(0,1)\, \cong \, S^1 \times S^2$ is of special interest and in \cite{D10}, the algebraic approach is applied in order to compute KBSM($S^1 \times S^2$). As shown in \cite{HP1}, KBSM($S^1 \times S^2$) contains torsion, and in \cite{D10} we show how torsion is detected by solving the corresponding infinite system of equations (\ref{eqbbm}). This emphasizes on the importance of the algebraic approach based on unoriented braids that we present in this paper.
\end{remark}


\section{The diagrammatic approach for computing KBSM($L(p,q)$)}\label{diag}

In this section we compute the Kauffman bracket skein module of the lens spaces $L(p,q)$ using a diagrammatic method that is different to the diagrammatic method used in \cite{HP}. We first demonstrate this method for the $q=1$ case and we then extend it for $q>1$. It is worth mentioning that in \cite{D4}, a new basis for the Kauffman bracket skein module of the complement of $(2,2p+1)$-torus knots is presented via this new method, which is based on unoriented braids.

\subsection{The $L(p,1)$ case}\label{lcase1}

In this subsection we describe how the basis $\mathcal{B}_p$ can be obtained using a diagrammatic approach based on unoriented braids. Instead of the $\alpha$-band move, we will be using the $\beta$-type band move throughout this section (recall Figure~\ref{bmov}).

\smallbreak

Let $t^n\in B_{\rm ST}$. Apply a $\beta$-type band move as shown in the first two illustrations of Figure~\ref{betabm}. The red strand corresponds to the strand that appears after the band move is performed, which wraps around the surgery curve $I$, $p$ times. The orange strands in the middle illustration indicate the places where isotopy in ST is performed in order to unwrap the moving strand from the surgery strand.

\begin{figure}
\begin{center}
\includegraphics[width=5.5in]{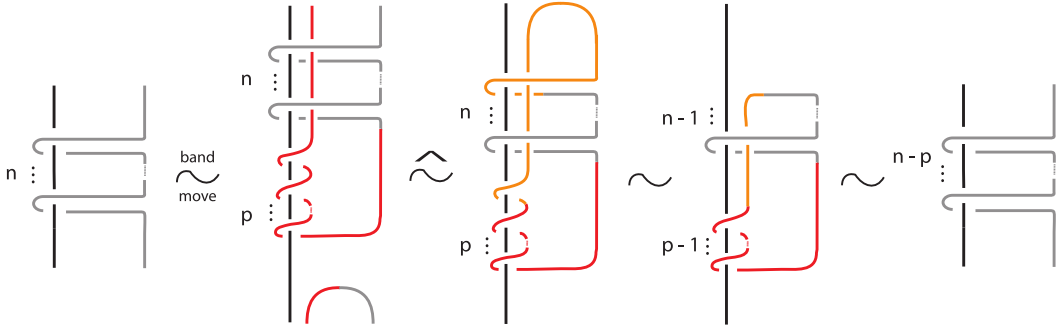}
\end{center}
\caption{The $\beta$-type band move on an element in $B_{\rm ST}$ and its result in KBSM($L(p,1)$).}
\label{betabm}
\end{figure}

\bigbreak

As shown in Figure~\ref{betabm}, using the $\beta$=band move and isotopy in ST, we may reduce $t^{n}$ to $t^{n-p}$ for all $n\in \mathbb{Z}$. In particular, for $n\in \mathbb{N}$, we have that:

\[
\begin{array}{ccccccc}
n=0 & : & t^0\ =\ t^p & , & n=1 & : & t^1\ =\ t^{p-1}\\
    &   &            &   &     &   &\\
n=2 & : & t^2\ =\ t^{p-2} & , & n=3 & : & t^3\ =\ t^{p-3}\\
 \vdots   &   &    \vdots        &  & \vdots      & &\vdots  \\
n=\lfloor p/2 \rfloor -1 & : & t^{\lfloor p/2 \rfloor -1}\ =\ t^{p- \lfloor p/2 \rfloor + 1} &  & & &
\end{array}
\] 

\noindent and we conclude that the $t^n$'s for $n\in \{\lfloor p/2 \rfloor +1, \ldots, p\}$ are not in the basis of KBSM($L(p,1)$).

\smallbreak

Similarly, for $n\in \mathbb{Z}\backslash \mathbb{N}$, we have:

\[
\begin{array}{ccccccc}
n=-1 & : & t^{-1}\ =\ t^{p+1} & , & n=-2 & : & t^{-2}\ =\ t^{p+2}\\
    &   &            &   &     &   &\\
n=-3 & : & t^{-3}\ =\ t^{p+3} & , & n=-4 & : & t^{-4}\ =\ s_{p+4}\\
 \vdots   &   &    \vdots        &  & \vdots      & &\vdots  \\
\end{array}
\] 

\noindent and we conclude that $t^{p+k}\, \sim\, t^{-k}$, for $k\in \mathbb{N}$. Recall now that $t^{-k}\, \widehat{\underset{{\rm skein}}{\cong}}\, \underset{i=0}{\overset{k}{\sum}}\, t^i$ (Proposition~\ref{pr1}). Thus, $t^{p+1}\, \sim\, t^{-1}\, =\ t$, etc. We conclude again that the $t^n$'s for $n>p$ are not in the basis of KBSM($L(p,1)$). Equivalently, we have that the set $\mathcal{B}_p$ spans KBSM($L(p,1)$). Linear independence follows then from the proof of Theorem~\ref{important}.

\subsection{The $L(p,q),\, q>1$ cases}\label{lpq}

In this subsection we adjust the proof of Theorem~\ref{important} for the case of $L(p, q),\, q>1$ and in particular we show that the diagrammatic approach using the unoriented braids that we presented in \S~\ref{lcase1}, also works for $q>1$. Hence, we shall conclude that the set $\mathcal{B}_p$ forms a basis for KBSM($L(p,q)$). 

\smallbreak

We first note that the band moves are more complicated for $q>1$, in the sense that when a moving strand of a mixed link approaches the surgery curve $\hat{I}$, $q$ new strands will appear wrapping around $\hat{I}$, $p$-times. For an illustration see Figure~\ref{bandmm}, for $p=3$ and $q=2$ and where band move is shortened to bm.

\begin{figure}[H]
\begin{center}
\includegraphics[width=3.5in]{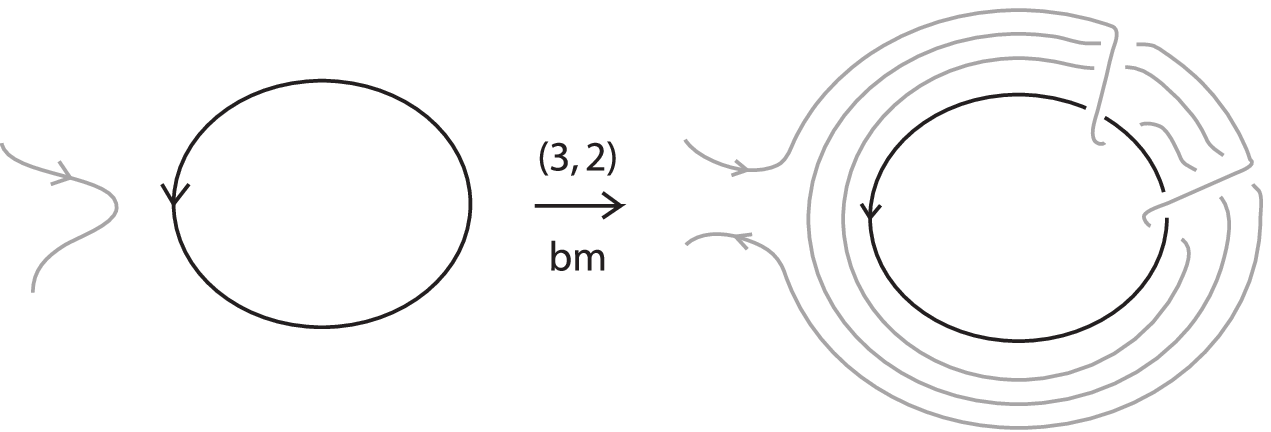}
\end{center}
\caption{The $\beta$-type band move for $L(3, 2)$.}
\label{bandmm}
\end{figure}

On the unoriented braid level, a generic $(p, q)$-band move is illustrated in Figure~\ref{bbm1}. The shaded area depicts the braided $(p, q)$-torus knots that appears after the performance of the band move, coming from the fact that this torus knot bounds a disc in $L(p, q)$. Note also that the standard closure for mixed braids on the resulted ``braid'' is a $\beta$-band move on the mixed link level.

\begin{figure}[H]
\begin{center}
\includegraphics[width=2.5in]{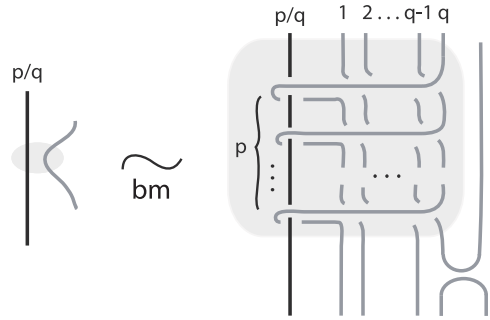}
\end{center}
\caption{The band move for $L(p,q), q>1$.}
\label{bbm1}
\end{figure}

Our starting point is the basis $B_{\rm ST}$ of the Kauffman bracket skein module of the solid torus. The idea is the same as in the $q=1$ case. More precisely, we consider elements $t^n\in\, B_{\rm ST}$ and perform the $\beta$-type band moves. Then, using isotopy in ST we may reduce $t^{n}$ to $t^{n-p}$ for all $n$. In Figure~\ref{bbmn} we demonstrate this process for $L(3, 2)$ and in particular we show how to reduce $t^2$ to $t$ in $L(3,2)$.

\begin{figure}
\begin{center}
\includegraphics[width=4.7in]{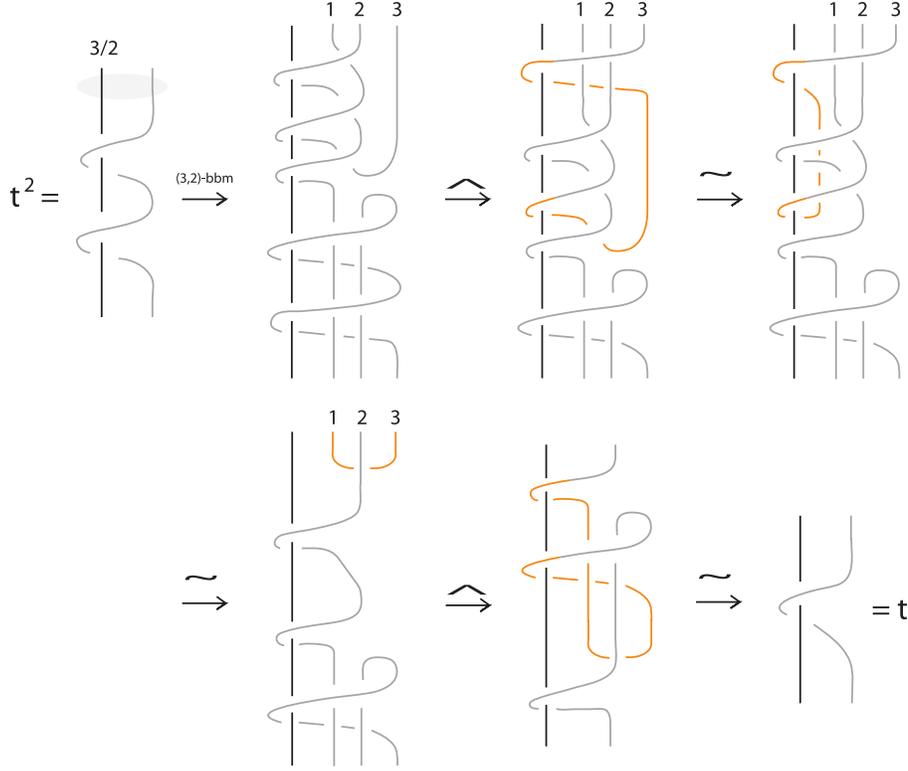}
\end{center}
\caption{ $t^2\, \sim t$ in $L(3, 2)$.}
\label{bbmn}
\end{figure}

Thus, for $k\notin I$, we have that $t^{k}\, \widehat{\underset{\rm skein}{\cong}}\, t^{n}$, for some $n\in I$. The same arguments that we used in the proof of Theorem~\ref{important} for the $q=1$ case, in order to prove that the set $\mathcal{B}_p$ is linearly independent, also apply for the $q>1$ case, and we conclude that the set $\mathcal{B}_p$ forms a basis for KBSM($L(p,q)$).

\begin{remark}\rm
It is possible to use the algebraic method presented in \S~\ref{kblens} to derive the basis $\mathcal{B}_p$ for KBSM($L(p,q)$), $q>1$. Only the calculations involved are more complicated due to the complexity of the $(p, q)$-braid band move: $bbm\left(t^n\right)\, = \, \left[(\sigma_{q-1}\, \ldots\, \sigma_1)\, t\right]^p\, t_{q}^n\, \sigma_q^{\pm 1}$. If we consider $bbm(t^n)$ in KBSM(ST), we may express it as a sum of elements in $B_{\rm ST}$. Using the Kauffman bracket skein relations together with Lemma~\ref{nein} and Proposition~\ref{th1}, one may show that $bbm\left(t^n\right)\, = \, \underset{i=0}{\overset{p+n}{\sum}}\, a_i\, t^i$, for some coefficients $a_i$ and for $n\in \mathbb{N}$. The result then follows similarly to $L(p, 1)$ case.
\end{remark}

\section{Conclusions}

In this paper we introduce the concept of unoriented braids, that play an important role for computing Kauffman bracket skein modules of c.c.o. 3-manifolds, and that seems promising in serving as a tool for studying c.c.o. 3-manifold invariants in general. Then, we introduce the generalized Temperley-Lieb algebra of type B, $TL_{1, n}$, and Jones' original idea (\cite{Jo}), we present the most generic invariant, $V$, for knots and links in the solid torus ST, of the Kauffman bracket type. We then extend $V$ to an invariant for knots and links in the lens spaces by imposing on $V$ relations coming from the (braid) band moves. Our starting point is the Kauffman bracket skein module of ST, KBSM(ST), since 
\[
{\rm KBSM(}L(p,q){\rm)}\, =\, {\rm KBSM(ST)}/<{\rm band\ moves}>.
\]
We first present a new basis for KBSM(ST), $B_{\rm ST}$, using the concept of unoriented braids and we then solve the infinite system of equations $V_{\widehat{a}}\ =\ V_{\widehat{bbm(a)}}$, where $bbm(a)$ denotes the result of a $(p,1)$-braid band move on an element $a$ in the basis of KBSM(ST). This is equivalent to computing the Kauffman bracket skein module of $L(p,1)$ and it is the first time that a skein module is fully computed via this algebraic approach based on braids.
\smallbreak
For the case of the HOMFLYPT skein module of the lens spaces $L(p,1)$, the reader is referred to \cite{DL3, DL4, DLP, D3} for an algebraic approach based on braids and the generalized Hecke algebra of type B, and \cite{GM} for a diagrammatic approach via arrow diagrams (see also \cite{G, DGLM}). We then present a new diagrammatic approach for computing KBSM($L(p,1)$) and we finally extend this approach for computing KBSM($L(p,q)$), $q>1$. Finally, it is worth mentioning that in \cite{D5, D6, D7, D8, D9}, skein modules are also discussed for different families of ``knotted objects'' in 3-manifolds, such as tied links, pseudo and singular links and knotoids.

\end{document}